\numberwithin{equation}{section}
\theoremstyle{plain}
\newtheorem{thm}{Theorem}[section]
\newtheorem{prop}[thm]{Proposition}
\newtheorem*{claim}{Claim}
\newtheorem{cor}[thm]{Corollary}
\newtheorem{lem}[thm]{Lemma}
\theoremstyle{definition}
\newtheorem{defin}{Definition}[section]
\newtheorem{assumption}{Assumption}[section]
\newtheorem{statement}{Statement}[section]
\theoremstyle{remark}
\newtheorem{rmk}{Remark}[section]
\newcommand{\aaa}[1]{\left\vert#1\right\vert}
\DeclareMathOperator{\Ker}{Ker}
\DeclareMathOperator{\Rang}{Rang}
\DeclareMathOperator{\Dom}{Dom}
\DeclareMathOperator{\End}{End}
\DeclareMathOperator{\Id}{Id}
\DeclareMathOperator{\supp}{supp}
\newcommand{\Tl}{\mathcal{T}}
\newcommand{\N}{\mathbb{N}}
\newcommand{\R}{\mathbb{R}}
\newcommand{\Cs}{\mathbb{C}}
\newcommand{\Cn}{\mathbb{C}^{n}}
\newcommand{\ii}{\sqrt{-1}}
\newcommand{\p}{\partial}
\newcommand{\pb}{\bar{\partial}}
\newcommand{\la}{\langle}
\newcommand{\ra}{\rangle}
\newcommand{\tchi}{\Tilde{\chi}}
\newcommand{\To}{\rightarrow}
\newcommand{\Bsko}{B^{(q)}_{0,s}}
\newcommand{\tpb}{\Tilde{\bar{\partial}}}
\newcommand{\ts}{\Tilde{s}}
\newcommand{\tphi}{\Tilde{\phi}}
\newcommand{\tomega}{\Tilde{\omega}}
\newcommand{\T}{T^{*,(0,q)}\Cn}
\newcommand{\TM}{T^{*,(0,q)}M}
\newcommand{\zb}{\bar{z}}
\newcommand{\wb}{\bar{w}}
\def\l@subsection{\@tocline{2}{0pt}{2.5pc}{5pc}{}}
\def\l@subsubsection{\@tocline{2}{0pt}{5pc}{7.5pc}{}}
\begin{document}

\title{Spectral Kernels and Holomorphic Morse Inequalities for Sequence of Line Bundles}
\author{Yueh-Lin Chiang}

\begin{spacing}{1.5}
\begin{abstract}
Given a sequence of Hermitian holomorphic line bundles $(L_k,h_k)$ over a complex manifold $M$ which may not be compact, we generalize the scaling method in \cite{me} to study the asymptotic behavior of the Bergman kernels and spectral kernels with respect to the space of global holomorphic sections of $L_k$ with $(0,q)$-forms. We derive the leading term of the Bergman and spectral kernels under the local convergence assumption in the sequence of Chern curvatures $c_1(L_k,h_k)$, inspired by \cite{01}. The manifold $M$ may be non-Kähler and $c_1(L_k,h_k)$ may be negative or degenerate. Moreover, we establish the $L_k$-asymptotic version of Demailly's holomorphic Morse inequalities as an application to compact complex manifolds.
\end{abstract}
\thanks{The paper is a continuation of my master thesis \cite{me} under the supervision of Professor Chin-Yu Hsiao, and the idea comes from the work of Professor Dan Coman, Wen Lu, Xiaonan Ma, and George Marinescu \cite{01}.}
\end{spacing}
\begin{spacing}{1}
\maketitle \tableofcontents
\end{spacing}
\newpage
\section{Introduction}

 For a holomorphic Hermitian line bundle $(L,h^L)$ over a Hermitian complex manifold $M$, the asymptotic behavior of the Bergman kernel for high tensor power $L^k:=L^{\otimes k}$ has been extensively studied for a long time (cf.\cite{001},\cite{002},\cite{003},\cite{me},\cite{004},\cite{005},\cite{006},\cite{007}). In \cite{me}, the author adopted a simple scaling method to study the large $k$ behavior of Bergman and spectral kernels of $L^k$ with $(0,q)$-forms and obtain the leading term. In this paper, inspired by the work of Coman, Lu, Ma, and Marinescu \cite{01}, we generalize the method in \cite{me} and consider a more general context, a sequence of line bundles. That is, we must replace the line bundles $\{L^k\}_{k\in\N}$ with a sequence of line bundles $\{L_k\}_{k\in\N}$.    

The scaling method has been used in many different geometric objects. In CR geometry, Hsiao and Zhu \cite{Kohn} established the asymptotic behavior of the heat kernel for the Kohn Laplacian and proved the Morse inequalities of the CR manifolds. Similarly, in real geometry, Chen \cite{jt} employed this technique to study the heat kernel of real manifolds and provided a new proof of the classical Morse inequalities. As mentioned above, the author \cite{me} obtained the semi-classical asymptotic of Bergman and spectral kernels in complex geometry. This technique is relatively simple and does not require complicated analytical tools. For this reason, various geometric objects can be studied using this technique.

In the present paper, we establish the local uniform estimate for the scaled Bergman and spectral kernels in Chapter \ref{Chapter 2}. Moreover, the outcome is also valid for Schwartz kernels with respect to bounded operators of the type defined in (\ref{map}). In Section \ref{section 4.3}-Section \ref{1233333344}, we prove the local convergence of the scaled Bergman and spectral kernels, which is the main result of this paper. In Section \ref{section 3.4}, we offer a more straightforward idea to prove the asymptotic of Bergman kernel by the Heat kernel method under the global large spectral gap condition (cf. Assumption \ref{spectral gap 2} ).

\subsection{Set-up and the main results}
Let $\{(L_k,h_k)\}_{k=1}^{\infty}$ be a sequence of Hermitian holomorphic line bundles over a Hermitian complex manifold $(M,\omega)$ where $\omega$ is a positive Hermitian $(1,1)$-form. For an open set $U\subset M$, denote by $\Omega^{0,q}(U,L_k)$ the space of smooth $L_k$-valued $(0,q)$-forms over $U$ and by $\Omega^{0,q}_c(U,L_k)$ the subspace of $\Omega^{0,q}(U,L_k)$ consisting of elements with compact support in $U$. Suppose $s_k$ is a local holomorphic non-vanishing section of $L_k$, then we can relate $s_k$ to a weight function $\phi_k$ with $|s_k|_{h_k}=e^{-2\phi_k}$. The Chern curvature form $c_1(L_k,h_k)$ is locally given by the $(1,1)$-form: \begin{equation}\label{ Chern curvature}
c_1(L_k,h_k)=-\frac{1}{(2\pi)}\cdot 2\pb\p\phi_k=\frac{1}{\pi}\sum_{i,j=1}^{n}\dfrac{\p^2\phi_k}{\p z^i\p\zb^j}dz^i\wedge d\zb^j.    
\end{equation}
Here, $n$ is the complex dimension of $M$. There is a fibrewise Hermitian inner product $\la\cdot|\cdot\ra_{\omega,h_k}$ on $\TM\otimes L_k$ given by \begin{equation}\label{0406}
    \la \eta_1\otimes s_k|\eta_2\otimes s_k\ra_{\omega,h_k}=\la\eta_1|\eta_2\ra_{\omega}e^{-2\phi_k} \quad \text{where }\, \eta_i\in\Omega^{0,q}(M).
\end{equation}
 We also denote $\la\cdot|\cdot\ra_{\omega,\phi_k}:=\la\cdot|\cdot\ra_{\omega,h_k}$ for convenience. Let $L^2_{\omega,\phi_k}(U,\TM\otimes L_k)$ be the completion of $\Omega^{0,q}_c(U,L_k)$ with respect to the inner product $(\cdot|\cdot)_{\omega,\phi_k}:=\int_U \la\cdot|\cdot\ra_{\omega,\phi_k} dV_{\omega}$. Here, $dV_{\omega}$ is the volume form $\frac{\omega^n}{n!}$. Denote $\|\cdot\|_{\omega,\phi_k,U}$ as the induced norm. Next, the $L_k$-valued Cauchy-Riemann operator is denoted by $\pb^{q}_{k}:\Omega^{0,q}(M,L_k) \To \Omega^{0,q+1}(M,L_k)$, and by $\pb^{q,*}_{k}:\Omega^{0,q}(M,L_k) \To \Omega^{0,q-1}(M,L_k)$ the formal adjoint of $\pb^{q-1}_{k}$ with respect to $(\cdot|\cdot)_{\omega,\phi_k}$. If the manifold is compact, we denote \[
  \mathscr{H}^{0,q}(M,L_k):=\frac{\Ker \pb^q_k:\Omega^{0,q}(M,L_k)\To\Omega^{q+1}(M,L_k)}{\Rang \pb^{q-1}:\Omega^{0,q-1}(M,L_k)\To\Omega^{0,q}(M,L_k)},
 \]which is the Dolbeault cohomology. We now fix a point $p\in M$ and make the assumption inspired by \cite{01}. 
 \begin{assumption}\label{assumption}There exists an open set $D$ containing $p$ such that \[
C_k^{-1}c_1(L_k,h_k)=R+o(1)\quad \text{on }\, D \quad \text{in the}\,\, \mathscr{C}^{\infty}\text{-topology},
\]where $C_k$ is a sequence of real numbers with $C_k\To\infty$. Here, $R$ is a Hermitian $(1,1)$-form, which may not be positive or non-degenerate.
\end{assumption}
Now, we identify the form $R$ with the Hermitian matrix $\hat{R}\in \mathscr{C}^{\infty}(D,\End(T^{(1,0)}M))$ such that for each $U,V\in T^{(1,0)}_zM$, $z\in M$,\begin{equation}\label{1159}
\la\hat{R}(z)U|V\ra_{\omega}=\la R(z),U\wedge \Bar{V}\ra,
\end{equation}where $\la\cdot,\cdot\ra$ is the natural pairing of $T^{*,(1,1)}M$ and $T^{(1,1)}M$. We introduce the main application of this paper and start by the notation of $q$-index set. 
\begin{defin}
    Denote $p\in M(q)$ if $\hat{R}(p)$ is non-degenerate at $p$ and has exactly $q$ negative eigenvalues and $n-q$ positive eigenvalues. Also, we define \[
M(\leq q):=\bigcup_{j=0,\cdots,q}M(j) \subset M.
\]
\end{defin}
\begin{thm}[$L_k$-version  holomorphic Morse inequalities]\label{Morse}Let $(M,\omega)$ be a compact complex manifold and $(L_k,h_k)$ be sequence of Hermitian holomorphic line bundles over $M$. If there exists a sequence $C_k\To\infty$ such that Assumption \ref{assumption} holds for $D=M$, then we have the following asymptotic estimates as $k\To\infty$:
\begin{itemize}
    \item (Weakly Morse inequality)
        \begin{equation}\label{morse1}
            \dim\mathscr{H}^{0,q}(M,L_k)\leq \left(\frac{C_k}{2\pi}\right)^n\int_{M(q)}\big|\frac{R^n}{\omega^n}\big|dV_{\omega}+o((C_k)^n).
        \end{equation}Moreover, if the spectral gap condition ( which will be defined later in Def \ref{spectral gap}) holds on $M$, the equality above holds.
    \item (Strong Morse inequality) 
        \begin{equation}\label{morse2}
        \sum_{j=0}^{q}(-1)^{j}\dim\mathscr{H}^{0,j}(M,L_k)\leq \left(\frac{C_k}{2\pi}\right)^n\int_{M(\leq q)}\left(\frac{R^n}{\omega^n}\right)dV_{\omega}+o((C_k)^n).
        \end{equation}If the spectral gap condition (cf. Def \ref{spectral gap}) holds on $M$, then the equality holds. 
        \item (Asymptotic Riemann-Roch theorem)
        In the case $q=n$, the equality in (\ref{morse2}) holds and hence
        \begin{equation}\label{morse3}
          \sum_{j=0}^{n}(-1)^{j}\dim\mathscr{H}^{0,q}(M,L_k)= \left(\frac{C_k}{2\pi}\right)^n\int_{M}\left(\frac{R^n}{\omega^n}\right)dV_{\omega}+o((C_k)^n).  
        \end{equation}
\end{itemize}Here, note that $\frac{R^n}{\omega^n}(z)=0$ if $R(z)$ is degenerate, and $M(\leq n)=\{z\in M;R(z) \text{ is non-degenerate}\}$.
\end{thm}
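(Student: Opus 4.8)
\emph{Proof plan.} The plan is to reduce the three inequalities to Hodge theory together with the scaled spectral‑kernel asymptotics, and then run Demailly's spectral‑space argument. First I would exploit compactness of $M$: the Kodaira Laplacian $\Box^{(q)}_k:=\pb^{q,*}_k\pb^q_k+\pb^{q-1}_k\pb^{q-1,*}_k$ on $\Omega^{0,q}(M,L_k)$ is self‑adjoint with discrete spectrum, and Hodge theory for the Dolbeault complex (no Kähler hypothesis on $\omega$ is needed) gives $\dim\mathscr{H}^{0,q}(M,L_k)=\dim\Ker\Box^{(q)}_k=:h^q_k$. For $\lambda\ge0$ let $E^q_k(\lambda)\subset\Omega^{0,q}(M,L_k)$ be the span of the eigenforms of $\Box^{(q)}_k$ with eigenvalue $\le\lambda$ and $N^q_k(\lambda):=\dim E^q_k(\lambda)$; then $N^q_k(\lambda)=\int_M B^{(q)}_{k,\le\lambda}(z)\,dV_\omega(z)$, where $B^{(q)}_{k,\le\lambda}(z)$ is the diagonal value of the spectral kernel (the pointwise trace of the orthogonal projection onto $E^q_k(\lambda)$), precisely the object whose scaled asymptotics were established earlier.

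Next, since $\pb_k$ and $\pb^*_k$ commute with $\Box_k$ and preserve eigenspaces, $(E^\bullet_k(\lambda),\pb_k)$ is a bounded complex of finite‑dimensional vector spaces (zero in degrees $<0$ and $>n$) whose cohomology in degree $j$ is $\Ker\Box^{(j)}_k$, because on each eigenspace with eigenvalue $>0$ the relation $\Box_k=\pb_k\pb^*_k+\pb^*_k\pb_k$ forces exactness. The standard linear‑algebra inequality for such complexes then gives, for every $q$ and every $\lambda\ge0$,
\[
\sum_{j=0}^q(-1)^{q-j}h^j_k\le\sum_{j=0}^q(-1)^{q-j}N^j_k(\lambda),
\]
with equality for $q=n$ (the Euler characteristic of a bounded complex equals that of its cohomology), and in particular $h^q_k\le N^q_k(\lambda)$.

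Now I would take $\lambda=\varepsilon C_k$ for small fixed $\varepsilon>0$, chosen outside the at most countable set of values at which the relevant model spectral measures carry an atom (so that the convergence below is a genuine limit, which is what the signed sum requires). The local uniform estimate of Chapter \ref{Chapter 2} gives a constant with $C_k^{-n}B^{(j)}_{k,\le\varepsilon C_k}(z)\le C$ for all $z\in M$ and all large $k$, while the convergence result of Sections \ref{section 4.3} onward gives, pointwise in $z\in M$, $\lim_{k}C_k^{-n}B^{(q)}_{k,\le\varepsilon C_k}(z)=\beta^q_{\hat{R}(z)}(\varepsilon)$, the diagonal value of the spectral kernel on $(0,q)$‑forms of the model Laplacian on $\Cn$ with constant curvature $\hat{R}(z)$ and window $[0,\varepsilon]$. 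Dominated convergence then yields $C_k^{-n}N^j_k(\varepsilon C_k)\to\int_M\beta^j_{\hat{R}(z)}(\varepsilon)\,dV_\omega(z)$. Feeding this into the finite‑complex inequality, taking $\limsup_k$ (a genuine limit termwise), and then letting $\varepsilon\downarrow0$ — using that $\varepsilon\mapsto\beta^j_{\hat{R}(z)}(\varepsilon)$ is nondecreasing and bounded, so by monotone convergence $\int_M\beta^j_{\hat{R}(z)}(\varepsilon)\,dV_\omega$ decreases to $\frac{1}{(2\pi)^n}\int_{M(j)}\big|\frac{R^n}{\omega^n}\big|\,dV_\omega$ via the model computation $\beta^j_{\hat{R}(z)}(\varepsilon)\downarrow\frac{1}{(2\pi)^n}|\det\hat{R}(z)|\,\mathbbm{1}_{M(j)}(z)=\frac{1}{(2\pi)^n}\big|\frac{R^n}{\omega^n}\big|(z)\,\mathbbm{1}_{M(j)}(z)$, with value $0$ wherever $\hat{R}(z)$ is degenerate (there the model has continuous spectrum down to $0$, so the window collapses) — gives
\[
\limsup_{k\to\infty}C_k^{-n}\sum_{j=0}^q(-1)^{q-j}h^j_k\le\frac{1}{(2\pi)^n}\sum_{j=0}^q(-1)^{q-j}\int_{M(j)}\Big|\frac{R^n}{\omega^n}\Big|\,dV_\omega.
\]
Since $\frac{R^n}{\omega^n}$ has sign $(-1)^j$ on $M(j)$, the right‑hand side is $(-1)^q(2\pi)^{-n}\int_{M(\le q)}\frac{R^n}{\omega^n}\,dV_\omega$, and rearranging yields (\ref{morse2}); using only $h^q_k\le N^q_k(\varepsilon C_k)$ yields (\ref{morse1}); and for $q=n$ the finite‑complex step is an equality, so the $\limsup$ is a limit and (\ref{morse3}) follows. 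Finally, under the spectral gap condition of Definition \ref{spectral gap}, $\Box^{(j)}_k$ has no spectrum in $(0,\varepsilon C_k]$ for small $\varepsilon$, hence $N^j_k(\varepsilon C_k)=h^j_k$; all the $\limsup$'s become limits and the inequalities become equalities.

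The step I expect to be the main obstacle is the model spectral‑kernel analysis carried out uniformly over all $z\in M$ simultaneously: one must show $\beta^q_{\hat{R}(z)}(\varepsilon)$ is finite, monotone and uniformly bounded in $\varepsilon$, and decreases as $\varepsilon\downarrow0$ exactly to the model Bergman density $\frac{1}{(2\pi)^n}\big|\frac{R^n}{\omega^n}\big|(z)\,\mathbbm{1}_{M(q)}(z)$ — in particular that the (possibly non‑negligible) degenerate locus of $\hat{R}$ contributes nothing in the limit. Granting the paper's convergence theorem for the scaled spectral kernels and the Chapter \ref{Chapter 2} uniform estimate, everything else is the standard Demailly spectral‑space machinery.
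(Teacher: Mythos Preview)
Your overall architecture (Hodge theory, the finite complex $(E^\bullet_k(\lambda),\pb_k)$, Demailly's linear-algebra inequality, and the exactness for $q=n$) matches the paper's. The substantive discrepancy is your choice of spectral window $\lambda=\varepsilon C_k$, and this creates two real problems when measured against what the paper actually proves.

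First, the convergence result you invoke is not the one established here. Theorems \ref{main theorem} and \ref{main theorem 2} concern the scaled kernels $P^q_{(k),(C_k)^{-d},s}$ with threshold $(C_k)^{-d}\to0$, and their limit is the model \emph{Bergman} kernel, not a model spectral kernel with window $[0,\varepsilon]$. Nothing in Sections \ref{section 4.3}--\ref{1233333344} yields $\lim_k C_k^{-n}B^{(q)}_{k,\le\varepsilon C_k}(z)=\beta^q_{\hat R(z)}(\varepsilon)$; in fact the proof of Theorem \ref{main theorem 2} uses (\ref{natural}) and (\ref{Hohohoho}), which are tailored to a threshold decaying like $(C_k)^{-d}$. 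So your pointwise-limit step, and hence the subsequent $\varepsilon\downarrow0$ argument, is not supported by the paper's results. The paper sidesteps this entirely: with threshold $(C_k)^{-d}$ the scaled kernel converges \emph{directly} to the model Bergman density $\frac{1}{(2\pi)^n}\big|\frac{R^n}{\omega^n}\big|\mathbbm{1}_{M(q)}$ (equations (\ref{0410wow}) and Corollary \ref{04102021}), so no second limit in $\varepsilon$ is needed.

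Second, your equality clause under the spectral gap condition misreads Definition \ref{spectral gap}. That condition says $\|(I-P^q_k)u\|^2\le C(C_k)^d(\Box^q_k u\mid u)$, which forces the nonzero spectrum above roughly $C^{-1}(C_k)^{-d}$, not above $\varepsilon C_k$. It therefore does \emph{not} give $N^j_k(\varepsilon C_k)=h^j_k$; it gives $\mathscr{E}^q_{k,\le(C_k)^{-d}}=\Ker\Box^q_k$ for $d$ large, which is precisely how the paper obtains the equality cases (see (\ref{dim2})). You have conflated this with the stronger Assumption \ref{spectral gap 2} of Section \ref{section 3.4}.

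In short: replace $\varepsilon C_k$ by $(C_k)^{-d}$ throughout. Then the convergence input is exactly Corollary \ref{04102021}, the uniform bound is Theorem \ref{0410thm}, the $\varepsilon\downarrow0$ step disappears, and your spectral-gap argument for equality becomes correct. With that single change your proof coincides with the paper's.
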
\newpage
 We now formulate the main results of this paper. Define the Kodaira Laplacian \[
\Box^{q}_k:\Dom\Box^{q}_k\subset L^2_{\omega,\phi_k}(M,\TM\otimes L_k)\To L^2_{\omega,\phi_k}(M,\TM\otimes L_k),
\]which is the Gaffney extension (cf.\cite{Gaffney} or \ref{1154}). For any non-negative constant $c\geq 0$, denote \[P^{q}_{k,c}(z,w)\in\mathscr{C}^{\infty} \left(M\times M,(\TM\otimes L_k)\boxtimes (\TM\otimes L_k)\right)\] as the spectral kernel which is the Schwartz kernel of the spectral projection 
\[
{P}^{q}_{k,c}:=\mathbbm{1}_{[0,c]}\left(\Box^{q}_{k}\right):L^2_{\omega,\phi_k}(M,\TM\otimes L^k)\To \mathscr{E}^{q}_{k,\leq c}:=\text{Rang }\mathbbm{1}_{[0,c]}(\Box^q_k),
\] where $\mathbbm{1}_{[0,c]}\left(\Box^{q}_{k}\right)$ is the functional calculus of the indicator function $\mathbbm{1}_{[0,c]}$ with respect to the Kodaira Laplacain $\Box^q_{k}$ (cf.\cite[Chapter 2]{003} or (\ref{1156})). Here, $(\TM\otimes L_k)\boxtimes(\TM\otimes L_k)$ is the vector bundle over $M\times M$ whose fiber at $(z,w)\in M\times M$ is the space of linear transformation from $T^{*,(0,q)}_wM\otimes L_k\mid_{w}$ to $T^{*,(0,q)}_zM\otimes L_k\mid_{z}$. Moreover, the projection\[
P^q_{k}:=P^q_{k,0}:L^2_{\omega,\phi_k}(M,\TM\otimes L_k)\To \Ker\Box^{q}_k,
\] at the lowest level $c=0$ is called the Bergman projection. The Bergman kernel $P^q_{k}(z,w):=P^q_{k,0}(z,w)$ is the Schwartz kernel of $P^q_{k}$.  From now on, we fix a point $p\in M$ and accept the assumption \ref{assumption}. We can take $D$ as a complex chart centered on $p$ such that
\begin{equation}\label{local chart}
\omega(0)=\ii\sum_{i=1}^{n}dz^i\wedge d\zb^i\quad ;\quad R(0)=\sum_{i=1}^{n}2\lambda_{i,p}dz^i\wedge d\zb^i.
\end{equation} Note that $\{\lambda_{i,p}\}$ are the eigenvalues of $\hat{R}(p)$ defined in (\ref{1159}). In the case $p\in M(q)$, we assume $\lambda_{i,p}<0$ for $i=1,\cdots,q$ and $\lambda_{i,p}>0$ for $i=q+1,\cdots,n$.
Next, we take the sequence of non-vanishing holomorphic sections $s_k$ of $L_k$ over $D$ defined by (\ref{loc s_k}). We can localize the spectral kernel $P^q_{k,c}(z,w)$ in $D\times D$ with respect to $s_k$ by writing $P^q_{k,c}(z,w)$ as  \begin{equation}\label{loc1 Bergman}
P^q_{k,c}(z,w)=P^{q,s}_{k,c}(z,w)s_k(z)\otimes (s_k(w))^*,    
\end{equation}
Here, $P^{q,s}_{k,c}(z,w)$ is an element in 
 $\mathscr{C}^{\infty}(D \times D,\TM \boxtimes \TM )$. Moreover, We denote $P^{q,s}_{k}(z,w):=P^{q,s}_{k,0}(z,w)$ for the Bergman kernel case.  
\begin{defin}[local small spectral gap condition]\label{spectral gap}
    For any $q\in\{0,\cdots,n\}$ and an open set $D \subset M$, we say $D$ has local small spectral gap condition with respect to $\{C_k\}$ if there exist $d\in \N$ and $C >0$ such that for all large enough $k$,
    \begin{equation}\label{spectral gap ineq}
        \|\left(I-P^{q}_{k}\right)u\|^2_{\omega,\phi_k,M}\leq  C ({C_k})^{d}\left( \Box^{q}_{k}u\mid u\right)_{\omega,\phi_k,D} \quad \text{for all }\, u\in\Omega^{0,q}_c(D,L_k).
    \end{equation}
\end{defin}
The main theorems describe the asymptotic behavior of scaled spectral kernels of forms with energy lower than $(C_k)^{-d}$ for some $d\in\N$. First, we state the case $p\notin M(q)$: 
\begin{thm}[main theorem for vanishing case]\label{main theorem}
Let $d\in\N$. If $p\notin M(q)$, then  \begin{equation}\label{main theorem eq de}(C_k)^{-n}P^{q,s}_{k,(C_k)^{-d}}(\frac{z}{\sqrt{C_k}},\frac{w}{\sqrt{C_k}})\To 0, \quad \text{in }\, \mathscr{C}^{\infty}\text{-topology.}\end{equation}
Moreover, for the Bergman kernel case $(C_k)^{-n}P^{q,s}_{k}(z/\sqrt{C_k},w/\sqrt{C_k})$, we also have the property of vanishing.
\end{thm}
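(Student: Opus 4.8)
\medskip
\noindent\emph{Sketch of the intended proof.}

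The plan is to run the rescaling argument and reduce the statement to the vanishing of the $L^2$-kernel of a constant-curvature model Laplacian on $\mathbb{C}^n$. First I would localize and rescale: in the chart $D$ with coordinates normalized as in (\ref{local chart}) and with the section $s_k$ of (\ref{loc s_k}), write $\Box^{q}_{k}$ in the trivialization as an operator $\Box^{q,s}_{k}$ on $\Omega^{0,q}_c(D)$; introduce the dilation $F_k:u(z)\mapsto u(z/\sqrt{C_k})$ together with the natural rescaling of $\phi_k$ and the volume normalization producing the factor $(C_k)^{-n}$, and form $\Box^{q,s}_{k,s}:=\tfrac{1}{C_k}F_k^{-1}\Box^{q,s}_{k}F_k$. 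By Assumption \ref{assumption}, i.e. $C_k^{-1}c_1(L_k,h_k)=R+o(1)$ in the $\mathscr{C}^{\infty}$-topology, the coefficients of $\Box^{q,s}_{k,s}$ converge, locally uniformly together with all their derivatives, to those of the model Kodaira Laplacian $\Box^{(q)}$ on $\mathbb{C}^n$ built from $\bar\partial$, the flat metric, and the quadratic weight determined by $R(0)=\sum_i 2\lambda_{i,p}\,dz^i\wedge d\bar z^i$.

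Next I would use the uniform estimates of Chapter \ref{Chapter 2}, which apply to the scaled Schwartz kernels of spectral projections (these being operators of the type (\ref{map})): the family $(C_k)^{-n}P^{q,s}_{k,(C_k)^{-d}}(z/\sqrt{C_k},w/\sqrt{C_k})$ is bounded in $\mathscr{C}^{\infty}_{\mathrm{loc}}(\mathbb{C}^n\times\mathbb{C}^n)$, hence precompact, so it suffices to show that every $\mathscr{C}^{\infty}_{\mathrm{loc}}$-limit $P(z,w)$ along a subsequence is identically $0$. To identify such a limit, note that $P^{q}_{k,(C_k)^{-d}}$ is the orthogonal projection onto $\mathscr{E}^{q}_{k,\leq (C_k)^{-d}}$, so for $u\in\Omega^{0,q}_c(D,L_k)$ one has $\big(\Box^{q}_{k}P^{q}_{k,(C_k)^{-d}}u\mid P^{q}_{k,(C_k)^{-d}}u\big)_{\omega,\phi_k}\le (C_k)^{-d}\|u\|^2_{\omega,\phi_k,M}$; after applying $F_k^{-1}$ and dividing by $C_k$, the $\Box^{q,s}_{k,s}$-energy of the scaled kernel tested against a fixed form is $O((C_k)^{-d-1})\to 0$. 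Combining this with $\Box^{q,s}_{k,s}\to\Box^{(q)}$, with the idempotent and self-adjoint (reproducing) property of the projections, and with the uniform bounds of Chapter \ref{Chapter 2} to control the limit in $L^2$, I would conclude that the operator $\mathcal{P}$ with Schwartz kernel $P$ is a self-adjoint projection whose range is contained in $\mathrm{Ker}_{L^2}\Box^{(q)}=\mathrm{Ker}_{L^2}\bar\partial\cap\mathrm{Ker}_{L^2}\bar\partial^{*}$.

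Then I would invoke the classical description of the model kernel (cf. \cite{me},\cite{003}): decomposing a $(0,q)$-form into its components along $d\bar z^J$, $|J|=q$, the operator $\Box^{(q)}$ splits as a direct sum of harmonic-oscillator-type operators, and $\mathrm{Ker}_{L^2}\Box^{(q)}\ne\{0\}$ precisely when $\hat R(p)$ is non-degenerate with exactly $q$ negative eigenvalues, i.e. when $p\in M(q)$; if $\hat R(p)$ is non-degenerate with a different count of negative eigenvalues the Bochner--Kodaira identity yields a strictly positive lower bound for $\Box^{(q)}$, and if $\hat R(p)$ is degenerate the flat directions prevent any null solution from being square-integrable. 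Since $p\notin M(q)$ by hypothesis, $\mathrm{Ker}_{L^2}\Box^{(q)}=\{0\}$, hence $\mathcal{P}=0$ and $P\equiv 0$. As every subsequential limit vanishes, the whole sequence $(C_k)^{-n}P^{q,s}_{k,(C_k)^{-d}}(z/\sqrt{C_k},w/\sqrt{C_k})$ tends to $0$ in the $\mathscr{C}^{\infty}$-topology, and the Bergman case $c=0$ is contained in the same argument with the window $\{0\}$ in place of $[0,(C_k)^{-d}]$.

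The step I expect to be the main obstacle is the passage to the limit in the second paragraph: operator convergence $\Box^{q,s}_{k,s}\to\Box^{(q)}$ does not by itself force convergence of the associated spectral projections, so one must genuinely exploit the shrinking energy window $(C_k)^{-d-1}\to 0$, the uniform $\mathscr{C}^{\infty}$ and $L^2$ bounds of Chapter \ref{Chapter 2}, and elliptic regularity for the model in order to pin down that the limiting operator maps into $\mathrm{Ker}_{L^2}\Box^{(q)}$ rather than merely into some approximate low-energy subspace, and to guarantee the lower semicontinuity of the energy along the limit. The degenerate-curvature sub-case of the model computation also deserves care, since there $\Box^{(q)}$ has no spectral gap and the triviality of $\mathrm{Ker}_{L^2}\Box^{(q)}$ must be argued directly from non-integrability along the null directions.
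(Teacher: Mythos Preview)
Your proposal is correct and follows essentially the same route as the paper: uniform $\mathscr{C}^\infty$ bounds from Chapter \ref{Chapter 2} give precompactness, any subsequential limit $P^q_s$ defines a bounded operator on $L^2_{\omega_0}(\Cn,\T)$ with range in $\Ker\Box^q_{0,s}$ (the paper's Lemma \ref{4.2 thm bd op BoxB=0}, which is exactly your shrinking-energy argument combined with $\Box^q_{(k),s}\to\Box^q_{0,s}$), and then Theorem \ref{4.1 Model theorem} gives $\Ker\Box^q_{0,s}=\{0\}$ when $p\notin M(q)$, forcing $P^q_s\equiv 0$.

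One minor remark: you work harder than necessary by trying to establish that the limit is a self-adjoint \emph{projection}. For the vanishing case the paper observes that once you know the range of $P^q_s$ lies in $\Ker\Box^q_{0,s}=\{0\}$, you are done; the projection property (Statement \ref{4.2 statement 2}) is only needed for the non-vanishing case $p\in M(q)$, and there it requires the substantial machinery of Section \ref{1233333344}. So the obstacle you flag in your last paragraph is real for Theorem \ref{main theorem 2} but absent for Theorem \ref{main theorem}.
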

Next, we state the main theorem for the case $p\in M(q)$.
\begin{thm}[main theorem for non-vanishing case]\label{main theorem 2}Let $d\in\N$. If $p\in M(q)$, then the scaled kernel $(C_k)^{-n}P^{q,s}_{k,(C_k)^{-d}}(z/{\sqrt{C_k}},w/{\sqrt{C_k}})$ converges to
 \begin{equation*}\label{main theorem eq nonde}
       \dfrac{|\lambda_{1,p} \cdots \lambda_{n,p}|}{\pi^n}\,e^{2(\sum_{i=1}^{q}|\lambda_{i,p}|\zb^i w^i+\sum_{i=q+1}^{n}|\lambda_{i,p}|z^i\wb^i-\sum_{i=1}^{n}|\lambda_{i,p}||w^i|^2})(d\zb^1\wedge \cdots \wedge d\zb^q) \otimes (\dfrac{\p}{\p \wb^{1}} \wedge \cdots \wedge \dfrac{\p}{\p \wb^{q}}),
    \end{equation*}in $\mathscr{C}^{\infty}$-topology. Here, we identify $(d\zb^1\wedge \cdots \wedge d\zb^q) \otimes (\dfrac{\p}{\p \wb^{1}} \wedge \cdots \wedge \dfrac{\p}{\p \wb^{q}})$ as a section of $\T\boxtimes\T$ over $\Cn$ defined by \[\eta \mapsto (d\zb^1\wedge \cdots \wedge d\zb^q)\otimes \eta(\dfrac{\p}{\p \wb^{1}} \wedge \cdots \wedge \dfrac{\p}{\p \wb^{q}}) \quad \text{ for all }\, \eta\in \T.\] Moreover, if the local small spectral gap condition (cf. Def. \ref{spectral gap}) holds, the scaled Bergman kernel $(C_k)^{-n}P^{q,s}_{k}(z/{\sqrt{C_k}},w/{\sqrt{C_k}})$ has the same asymptotic.
    
\end{thm}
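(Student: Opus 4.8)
The plan is to prove Theorem~\ref{main theorem 2} by the rescaling method of \cite{me}, reducing the behaviour of $P^{q,s}_{k,(C_k)^{-d}}$ near $p$ to an explicit model Bergman kernel on $\Cn$.

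\emph{Step 1 (scaling and the model operator).} Work in the chart $D$ normalized by (\ref{local chart}) and choose the frame $s_k$ of (\ref{loc s_k}) so that the associated weight satisfies $\phi_k(0)=0$, $d\phi_k(0)=0$; Assumption~\ref{assumption} then forces $C_k^{-1}$ times the complex Hessian of $\phi_k$ to converge in $\mathscr{C}^{\infty}_{\mathrm{loc}}$ to that of the quadratic model weight $\phi_0$ determined by the constant curvature $R(0)$. Introduce the dilation $z\mapsto z/\sqrt{C_k}$ together with the rescaled metric $C_k\omega$ and let $\tBox^{q}_{k}$ be the unitarily conjugated Kodaira Laplacian; the standard scaling lemma (cf.\ \cite{me,01}) gives that its coefficients converge in $\mathscr{C}^{\infty}_{\mathrm{loc}}$ to those of the model operator $\Box^{q}_{\infty}$, the Kodaira Laplacian of the trivial line bundle over $(\Cn,\ \mathrm{Euclidean})$ with weight $\phi_0$. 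In this trivialization $(C_k)^{-n}P^{q,s}_{k,(C_k)^{-d}}(z/\sqrt{C_k},w/\sqrt{C_k})$ is exactly the Schwartz kernel of $\mathbbm{1}_{[0,\epsilon_k]}(\tBox^{q}_{k})$ for a window $\epsilon_k\to 0$, so it suffices to study the latter.

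\emph{Step 2 (compactness and identification of the limit).} By the local uniform estimates of Chapter~\ref{Chapter 2} — valid there also for spectral kernels and for Schwartz kernels of the operators in (\ref{map}) — the scaled kernels, with all derivatives, are bounded on compacts of $\Cn\times\Cn$ with a $k$-uniform Gaussian off-diagonal decay; hence any subsequence has a $\mathscr{C}^{\infty}_{\mathrm{loc}}$-convergent sub-subsequence with limit $P_{\infty}(z,w)$ inheriting that decay. The decay lets one pass the relations $\mathbbm{1}_{[0,\epsilon_k]}(\tBox^{q}_{k})^{2}=\mathbbm{1}_{[0,\epsilon_k]}(\tBox^{q}_{k})=\mathbbm{1}_{[0,\epsilon_k]}(\tBox^{q}_{k})^{*}$ through the limit under the integral sign, so $P_{\infty}$ is the kernel of an orthogonal projection; and since $\epsilon_k\to 0$, any $u$ in its range obeys $(\tBox^{q}_{k}u\mid u)\le\epsilon_k\|u\|^{2}$, which in the limit gives $\Box^{q}_{\infty}$-harmonicity. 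Thus $P_{\infty}$ is the Schwartz kernel of the orthogonal projection onto a closed $V\subseteq\Ker\Box^{q}_{\infty}\cap L^{2}$.

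\emph{Step 3 (the reproducing property — the main obstacle).} The crux is $V=\Ker\Box^{q}_{\infty}\cap L^{2}$, i.e.\ that $P_{\infty}$ reproduces every $L^{2}$ model-harmonic $(0,q)$-form. One transplants a slowly growing cut-off $\chi_{r_k}g$ of such a form (whose Gaussian tails are super-exponentially small) to $g_k\in\Omega^{0,q}_{c}(D,L_k)$ through $s_k$; from $\Box^{q}_{\infty}g=0$, from $\tBox^{q}_{k}\to\Box^{q}_{\infty}$, and from the rate in Assumption~\ref{assumption} one gets $(\tBox^{q}_{k}g_k\mid g_k)=o(\|g_k\|^{2})$, hence $\|(I-\mathbbm{1}_{[0,\delta]}(\tBox^{q}_{k}))g_k\|\to 0$ for any \emph{fixed} $\delta>0$; since $\Box^{q}_{\infty}$ has a spectral gap at $0$, this pins the limit of $\mathbbm{1}_{[0,\delta]}(\tBox^{q}_{k})$ to the model Bergman projection $B_{\infty}$. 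The genuine difficulty is that our window $\epsilon_k$ shrinks rather than being fixed: one squeezes $\mathbbm{1}_{\{0\}}(\tBox^{q}_{k})\le\mathbbm{1}_{[0,\epsilon_k]}(\tBox^{q}_{k})\le\mathbbm{1}_{[0,\delta]}(\tBox^{q}_{k})$ and must show that the intermediate spectral mass contributes nothing to the kernel near $p$ after rescaling — this, rather than any algebraic manipulation, is where I expect the real work to lie. For the Bergman-kernel assertion the local small spectral gap condition of Definition~\ref{spectral gap} supplies exactly this lower bound: (\ref{spectral gap ineq}) forces $\|(I-P^{q}_{k})g_k\|\to 0$ for the transplanted forms, so $P^{q}_{k}$ itself reproduces $B_{\infty}$; combined with $P^{q}_{k}\le P^{q}_{k,(C_k)^{-d}}$ and Step~2 this yields $(C_k)^{-n}P^{q,s}_{k}(z/\sqrt{C_k},w/\sqrt{C_k})\to B_{\infty}$.

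\emph{Step 4 (the explicit model kernel).} It remains to compute $B_{\infty}$. Since $\Box^{q}_{\infty}$ carries a diagonal quadratic weight, the $\pb$-complex factors as a tensor product over the $n$ coordinate directions; in each of the $q$ negative directions the $L^{2}$ ground state sits in $\pb$-degree one (antiholomorphic $\times$ Gaussian), in each of the $n-q$ positive directions it sits in degree zero (the Bargmann--Fock space), so $L^{2}$ harmonic $(0,q)$-forms occur precisely with the factor $d\zb^{1}\wedge\cdots\wedge d\zb^{q}$. Multiplying the one-dimensional reproducing kernels, and accounting for the fact that $P^{q,s}_{k,c}$ is trivialized by $s_k$ so that the weight at $w$ is not absorbed into the measure, gives exactly the Gaussian in the statement, with constant $|\lambda_{1,p}\cdots\lambda_{n,p}|/\pi^{n}$; in particular $V\neq 0$, so the limit is non-vanishing. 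Since $B_{\infty}$ is independent of the chosen subsequence, the whole scaled family converges, completing the proof. (The same computation yields $V=0$ when $p\notin M(q)$, which is Theorem~\ref{main theorem}.)
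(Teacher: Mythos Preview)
Your overall architecture matches the paper's, but Step~3 contains a genuine gap, and Step~2 relies on an unproved claim.

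\textbf{The main gap (Step 3).} You assert that the local small spectral gap condition ``forces $\|(I-P^{q}_{k})g_k\|\to 0$ for the transplanted forms''. It does not. After rescaling, inequality~(\ref{spectral gap ineq}) (and likewise the unconditional bound (\ref{natural}) for the spectral projection) reads
\[
\|(I-P^q_{(k),s})u_{(k)}\|^2 \lesssim (C_k)^{d+1}\Big(\|\pb^{q}_{(k),s}u_{(k)}\|^2+\|\pb^{q,*}_{(k),s}u_{(k)}\|^2\Big),
\]
so you need the right-hand bracket to be $o\big((C_k)^{-d-1}\big)$ for \emph{every} $d$. For the naive cut-off $g_k=\chi_{r_k}g$ the difference $\pb^{q}_{(k),s}-\pb^{q}_{0,s}$ has coefficients of size $O(C_k^{-1/2})$ on $|z|<(C_k)^{\epsilon}$ (this is exactly Lemma~\ref{4.5 thm 1 lem 1}), whence $\|\pb^{q}_{(k),s}(\chi_k g)\|^2=O\big((C_k)^{2n\epsilon-1}\big)$: a fixed polynomial rate, wiped out by the prefactor $(C_k)^{d+1}$. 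So your squeeze argument collapses both for the Bergman kernel and, since you feed the Bergman case back into the squeeze, for the spectral kernel as well (which the theorem claims \emph{unconditionally}).

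The paper repairs this by replacing the naive cut-off with $u_{(k)}:=\tchi_k\,\Tilde{B}^{q}_{(k),s}\,\chi_k u$, where $\Tilde{B}^{q}_{(k),s}$ is the Bergman projection of an \emph{extended} Laplacian $\Box^{q\sim}_{(k),s}$ on $\Cn$ that agrees with $\Box^{q}_{(k),s}$ on $B((C_k)^{\epsilon})$ and with the model outside. Because $u_{(k)}$ is genuinely harmonic for $\Box^{q\sim}_{(k),s}$ away from $\supp\pb\tchi_k$, the quantities $\pb^{q}_{(k),s}u_{(k)}$, $\pb^{q,*}_{(k),s}u_{(k)}$ live only on an annulus far from $\supp\chi_k$; the key off-diagonal estimate $\|\rho_k\Tilde{B}^{q}_{(k),s}\chi_k\|=O\big((C_k)^{-N}\big)$ for every $N$ (Lemma~\ref{4.5 thm2 lem1}, proved via the Hodge decomposition of Theorem~\ref{4.4 thm B=I-} and an iteration) then yields Corollary~\ref{4.5 thm2 cor}, which is precisely the super-polynomial smallness you are missing. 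This is the ``real work'' you correctly anticipated but did not supply.

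\textbf{A secondary issue (Step 2).} The paper does \emph{not} establish Gaussian off-diagonal decay for the scaled kernels---only local $\mathscr{C}^{\ell}$ bounds (Theorem~\ref{thm uniform bound})---so you cannot pass the identity $P^2=P$ through an integral over all of $\Cn$. The paper sidesteps this: it shows directly that the limit operator $P^q_s$ has norm $\le 1$ (Lemma~\ref{4.2 Lem B op}), maps into $\Ker\Box^{q}_{0,s}$ (Lemma~\ref{4.2 thm bd op BoxB=0}), and fixes $\Ker\Box^{q}_{0,s}$ (Statement~\ref{4.2 statement 2}). An idempotent of norm $\le 1$ is orthogonal, so no global decay is needed.
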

 
We next discuss the applications of the main theorems. Note that
\[
P^{q,s}_{k,c}(p,p)=P^{q,s}_{k,c}(z/\sqrt{C_k},w/\sqrt{C_k})\mid_{(z,w)=(0,0)}.\] Since $L_k\otimes (L_k)^*=\Cs$, we can identify $P^q_{k,c}(p,p)$ with an element of $\End(T^{*,(0,q)}_pM)$ and observe that \[
P^q_{k,c}(p,p)=P^{q,s}_{k,c}(p,p).
\]

By the main theorems, 
\begin{align*}
    P^{q}_{k,(C_k)^{-d}}(p,p)&= \left(\frac{C_k}{2\pi}\right)^n\aaa{\frac{R^n}{\omega^n}(p)}\cdot\mathbbm{1}_{ M(q)}(p)(d\zb^1\wedge \cdots \wedge d\zb^q) \otimes (\dfrac{\p}{\p \wb^{1}} \wedge \cdots \wedge \dfrac{\p}{\p \wb^{q}})+o((C_k)^n). 
\end{align*}If spectral gap condition (cf. Def. \ref{spectral gap}) holds, we also have 

 \begin{align*}
    P^{q}_{k}(p,p)&= \left(\frac{C_k}{2\pi}\right)^n\aaa{\frac{R^n}{\omega^n}(p)}\cdot\mathbbm{1}_{ M(q)}(p)(d\zb^1\wedge \cdots \wedge d\zb^q) \otimes (\dfrac{\p}{\p \wb^{1}} \wedge \cdots \wedge \dfrac{\p}{\p \wb^{q}})+o((C_k)^n). 
\end{align*} 
We get an asymptotic of the index density:\begin{equation}\label{0410wow}
\text{Tr}P^{q}_{k,(C_k)^{-d}}(p,p)=\left(\frac{C_k}{2\pi}\right)^n\aaa{\frac{R^n}{\omega^n}(p)}\cdot\mathbbm{1}_{ M(q)}(p)+o((C_k)^n),\quad \text{as}\,\,k\To\infty.
\end{equation} Also, if the spectral gap condition holds,\begin{equation}\label{0410eoeoe}
\text{Tr}P^{q}_{k}(p,p)=\left(\frac{C_k}{2\pi}\right)^n\aaa{\frac{R^n}{\omega^n}(p)}\cdot\mathbbm{1}_{ M(q)}(p)+o((C_k)^n),\quad \text{as}\,\,k\To\infty.
\end{equation} To apply the results to index theory, we need the local uniform bounds:
\begin{thm}[Corollary \ref{0410cor}, Section \ref{Aaaac}]\label{0410thm}For any compact set $K\subset D$, there exists a constant $\Tilde{C}_{K,\ell}$ independent of $k$ such that
\begin{equation}\label{0410}
 \sup_{z\in K}\aaa{(C_k)^{-n}P^{q}_{k,(C_k)^{-N}}(z,z)}\leq \Tilde{C}_{K}.
\end{equation} Also, the result holds for the Bergman kernel case.
\end{thm}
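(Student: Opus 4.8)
The plan is to obtain \eqref{0410} by specializing to the diagonal the local uniform estimates for the scaled spectral kernels established in Section \ref{Chapter 2}, and then covering $K$ by finitely many charts. Three observations set up the reduction. First, by \eqref{loc1 Bergman} and $L_k\otimes L_k^*=\Cs$ one has $P^q_{k,c}(z,z)=P^{q,s}_{k,c}(z,z)$ as an element of $\End(T^{*,(0,q)}_zM)$, so it is enough to bound its operator norm. Second, by the reproducing property of the Schwartz kernel on $\mathscr{E}^q_{k,\leq c}=\Rang\,\mathbbm{1}_{[0,c]}(\Box^q_k)$, for every $z$ and every $u\in T^{*,(0,q)}_zM\otimes L_k\mid_z$,
\[
\la P^q_{k,c}(z,z)u\mid u\ra_{\omega,\phi_k}=\sup\left\{\,\big|\la s(z)\mid u\ra_{\omega,\phi_k}\big|^2:\ s\in\mathscr{E}^q_{k,\leq c},\ \|s\|_{\omega,\phi_k,M}\leq 1\,\right\},
\]
so, with $c=(C_k)^{-N}$, it suffices to bound $|s(z)|^2_{h_k}$ by $C\,(C_k)^n$ uniformly for $z\in K$ and for $s$ in the unit ball of $\mathscr{E}^q_{k,(C_k)^{-N}}$. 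Third, such an $s$ solves $\Box^q_k s=f_k$ with $\|f_k\|_{\omega,\phi_k,M}^2\leq (C_k)^{-2N}\|s\|_{\omega,\phi_k,M}^2\leq (C_k)^{-2N}$ by the functional calculus; in particular $f_k=0$ when $c=0$, which is the Bergman case.

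Next I would localize and rescale. Fix a compact $K'$ with $K\subset\mathrm{int}(K')$ and $K'\Subset D$; for $z_0\in K'$ take a coordinate chart and the holomorphic frame $s_k$ as in \eqref{loc s_k} (normalized as in \eqref{local chart} at $z_0$), write the local representative $\tilde s$ of $s$, and push it forward by the dilation $z\mapsto z_0+z/\sqrt{C_k}$. Under this dilation the operator $(C_k)^{-1}\Box^q_k$ becomes a second-order operator whose coefficients converge on each fixed ball $B_R\subset\Cn$, in the $\mathscr{C}^\infty$-topology and uniformly in $z_0\in K'$, to the model Kodaira Laplacian attached to $R(z_0)$; this is exactly the convergence exploited in Section \ref{Chapter 2}, and it uses only Assumption \ref{assumption}, no sign condition on $c_1(L_k,h_k)$. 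Interior elliptic estimates for this family --- G\aa rding's inequality for the limiting model together with Sobolev embedding, the constants controlled by $\mathscr{C}^\infty$-bounds of the coefficients on $B_R$ --- bound the rescaled section (and its derivatives) on $B_1$ by its $L^2$-norm on $B_2$ plus the $L^2$-norm of the rescaled right-hand side, both with a constant independent of $k$ and of $z_0\in K'$. Since the dilation turns the bound $\|s\|_{\omega,\phi_k,M}\leq 1$ into an $L^2(B_2)$-bound of size $O((C_k)^{n/2})$ for the rescaled section, and the rescaled right-hand side is then negligible (it carries $\|f_k\|_{\omega,\phi_k,M}\leq(C_k)^{-N}$), undoing the dilation yields $|s(z_0)|^2_{h_k}\leq C\,(C_k)^n$ with $C$ independent of $k$ and of $z_0\in K'$. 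Covering $K$ by finitely many such charts and taking the largest constant gives \eqref{0410}; the Bergman-kernel case is the special value $c=0$, $f_k=0$, and the argument applies verbatim.

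The point that needs care is the uniformity, in $k$ and in the base point $z_0$, of the interior elliptic estimate for the rescaled $(C_k)^{-1}\Box^q_k$. Since $c_1(L_k,h_k)$ may be negative or degenerate, there is no Bochner--Kodaira positivity of $\Box^q_k$ to quote; the interior estimate must instead be bootstrapped from G\aa rding's inequality for the model operator, using the $o(1)$ convergence in Assumption \ref{assumption} to absorb the error terms on fixed balls, while checking that the lower-order coefficients --- which carry the $\sqrt{C_k}$-factors coming from the rescaled weight $\phi_k$ --- stay uniformly bounded on $B_R$. All of this is done in Section \ref{Chapter 2} for the scaled kernels $(C_k)^{-n}P^{q,s}_{k,(C_k)^{-N}}(z_0+z/\sqrt{C_k},z_0+w/\sqrt{C_k})$; what remains here, recorded as Corollary \ref{0410cor} in Section \ref{Aaaac}, is to observe that the constants there depend only on the $\mathscr{C}^\infty$-data on $K'$ and not on the particular centre, and then to set $z=w=0$.
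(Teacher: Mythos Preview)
Your proposal is correct, and its final paragraph is exactly the paper's argument: apply Theorem \ref{thm uniform bound} with $\ell=0$ and $(z,w)=(0,0)$, use that the constant $C(\ell,U,p)$ there depends continuously on the centre $p$, and cover $K$ by finitely many such centres.

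Your first two paragraphs, however, take a genuinely different route to the diagonal bound. The paper never invokes the extremal characterization
\[
\la P^q_{k,c}(z,z)u\mid u\ra_{\omega,\phi_k}=\sup\big\{\,|\la s(z)\mid u\ra_{\omega,\phi_k}|^2:\ s\in\mathscr{E}^q_{k,\leq c},\ \|s\|_{\omega,\phi_k,M}\leq 1\,\big\};
\]
instead it bounds the full scaled kernel $A^q_{(k),s}(z,w)$ via the smoothing property (Lemma \ref{lemma mapping property}), a duality argument in Sobolev spaces, and an approximation-of-identity trick (Theorem \ref{thm uniform bound}). Your reduction to pointwise bounds on individual eigensections, followed by the rescaled interior elliptic estimate plus Sobolev embedding applied to a single section, is more elementary and more transparent for the diagonal value alone. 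What the paper's approach buys in exchange for the extra machinery is the full off-diagonal $\mathscr{C}^\ell$-control of $P^{q}_{(k),s}(z,w)$ (Corollary \ref{cor666}), which is what feeds into the Arzel\`a--Ascoli argument in Section \ref{ssec}; your extremal argument does not directly give that.
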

Combining (\ref{0410wow}), (\ref{0410}), and dominated convergence theorem, we have the following result:
\begin{cor}[Local holomorphic Morse inequalities]\label{04102021}Fix $d\in\N$.
Let $(M,\omega)$ be a complex manifold, and $(L_k,h_k)$ be a sequence of Hermitian holomorphic line bundles over $M$. If there exists a sequence $C_k\To\infty$ such that Assumption \ref{assumption} holds on an open set $D$ , then for any compact set $K\subset V$, we have
\[
\int_{K}\text{Tr}P^{q}_{k,(C_k)^{-d}}(z,z)dV_{\omega}(z) = \left(\frac{C_k}{2\pi}\right)^{n}\int_{K}\big|\frac{R^n}{\omega^n}\big|\cdot \mathbbm{1}_{M(q)}dV_{\omega}+o((C_k)^n),\quad \text{as}\,\, k\To \infty.
\]Moreover, the equality holds for $\int_{K}\text{Tr}P^{q}_{k}(z,z)dV_{\omega}(z)$ if the spectral gap condition (Def. \ref{spectral gap}) holds on $D$.
\end{cor}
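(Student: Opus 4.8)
The plan is to reduce the corollary to a single application of the dominated convergence theorem. Introduce the normalized densities
\[
f_k(z):=\left(\frac{2\pi}{C_k}\right)^n\,\text{Tr}\,P^{q}_{k,(C_k)^{-d}}(z,z),\qquad z\in K ,
\]
where $K\subset D$ is the given compact set, and observe that the asserted estimate is equivalent to $\int_K f_k\,dV_\omega\to\int_K\big|\frac{R^n}{\omega^n}\big|\,\mathbbm{1}_{M(q)}\,dV_\omega$ (multiply through by $(C_k/2\pi)^n$ and absorb $o(1)\cdot(C_k/2\pi)^n$ into $o((C_k)^n)$). Thus it suffices to produce a pointwise limit for $f_k$ on $K$ together with a $k$-independent, $dV_\omega$-integrable dominating function.

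For the pointwise limit, fix $z_0\in K\subset D$. Because Assumption \ref{assumption} is imposed on all of $D$, the base point in Theorems \ref{main theorem} and \ref{main theorem 2} plays no distinguished role: taking a local chart centered at $z_0$ normalized as in (\ref{local chart}) and evaluating the scaled-kernel asymptotics of those theorems at $(z,w)=(0,0)$ yields precisely the pointwise identity (\ref{0410wow}) with $p$ replaced by $z_0$, hence $f_k(z_0)\to \big|\frac{R^n}{\omega^n}(z_0)\big|\,\mathbbm{1}_{M(q)}(z_0)$. The limiting function is a bounded Borel function on $K$: the set $M(q)$ is open, since non-degeneracy and a fixed signature of the continuous endomorphism field $\hat R$ are open conditions, and $z\mapsto\big|\frac{R^n}{\omega^n}(z)\big|$ is continuous (vanishing exactly where $R$ degenerates), so the product is measurable and finite on $K$.

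For the domination, apply Theorem \ref{0410thm} with $N=d$: there is a constant $\tilde C_K$, independent of $k$, with $\sup_{z\in K}\big|(C_k)^{-n}P^{q}_{k,(C_k)^{-d}}(z,z)\big|\le\tilde C_K$ for all large $k$. Since the fibrewise trace of an endomorphism of $T^{*,(0,q)}_zM$ is bounded by $\binom{n}{q}$ times its norm, $|f_k(z)|\le(2\pi)^n\binom{n}{q}\tilde C_K=:C'_K$ for all $z\in K$ and all large $k$; as $K$ is compact this constant is $dV_\omega$-integrable on $K$. Dominated convergence now gives the desired limit, hence the corollary. For the Bergman-kernel statement one repeats the argument verbatim, using (\ref{0410eoeoe}) in place of (\ref{0410wow}) in the pointwise step — this is where the local small spectral gap condition on $D$ enters, via the final clause of Theorem \ref{main theorem 2} — and the Bergman-kernel clause of Theorem \ref{0410thm} for the domination.

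I do not anticipate a genuine obstacle: all the analytic substance already resides in the main theorems and in Theorem \ref{0410thm}, and what remains is a routine measure-theoretic passage to the limit. The two points that merit care are (i) that the single-point asymptotics (\ref{0410wow})--(\ref{0410eoeoe}) are available at every $z_0\in K$, which is immediate because Assumption \ref{assumption} holds on all of $D\supset K$ so no point is privileged, and (ii) the measurability and integrability of the limiting density, which follow from the openness of $M(q)$ and the continuity of $R^n/\omega^n$.
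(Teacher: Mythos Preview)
Your proposal is correct and follows essentially the same approach as the paper: the paper explicitly states that the corollary follows by combining the pointwise asymptotic (\ref{0410wow}), the uniform bound (\ref{0410}), and the dominated convergence theorem. You have simply fleshed out the measure-theoretic details (measurability of the limit, integrability of the constant dominant) that the paper leaves implicit.
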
 
\begin{proof}[proof of Theorem \ref{Morse}]
We apply the result to compact complex manifolds. By the Hodge theorem,
\[
\dim \mathscr{H}^{0,q}(M,L_k)=\dim \Ker\Box^q_{k}.
\]We obtain the formula 
\begin{equation}
    \label{dim0}
  \dim \mathscr{H}^{0,q}(M,L_k)=\int_{M}\text{Tr}P^{q}_{k}(z,z)dV_{\omega}(z) 
\end{equation}Moreover, 
\begin{equation}\label{dim1}
\dim \mathscr{H}^{0,q}(M,L_k) \leq \dim \mathscr{E}^q_{k,\leq (C_k)^{-d}}=\int_{M}\text{Tr}P^{q}_{k,(C_k)^{-d}}(z,z)dV_{\omega}(z).     
\end{equation}If the spectral gap condition (Def. \ref{spectral gap}) holds for $D=M$, we observe that \begin{equation}\label{dim2}
\Ker\Box^q_{k}=\mathscr{E}^q_{k,\leq (C_k)^{-d}} \quad \text{for a large enough }d\in\N.    
\end{equation}
This means that the first equality in (\ref{dim1}) holds. To see the weak Morse inequality (\ref{morse1}), we apply Corollary \ref{04102021}, and identities (\ref{dim0}),(\ref{dim1}), and (\ref{dim2}). Next, note that
\[
(-1)^q|\frac{R^n}{\omega^n}(p)|=\frac{R^n}{\omega^n}(p),\quad  \text{for }\, p\in M(q).
\]
To prove the strong Morse inequality (\ref{morse2}), we can use the linear algebra result from Demailly \cite[Lemma 4.2]{De} or \cite[Lemma 3.2.12]{M} and have
\[
\sum_{j=0}^{q}(-1)^{j}\dim\mathscr{H}^{0,j}(M,L_k)\leq \sum_{j=0}^{q}(-1)^j\dim \mathscr{E}^j_{k,\leq (C_k)^{-d}}.
\]
By considering the complex $(\mathscr{E}^{\bullet}_{k,(C_k)^{-d}},\pb_k^{\bullet})$ and combining the identities (\ref{dim1}) and (\ref{dim2}), we obtain the strong holomorphic Morse inequality. Finally, by the fact that\[
0\longrightarrow \mathscr{E}^{0}_{k,\leq (C_k)^{-d}}\big/\Ker \Box^0_k\overset{\pb^0_k}{\longrightarrow}\mathscr{E}^1_{k,\leq (C_k)^{-d}}\big/\Ker \Box^1_k\overset{\pb^1_k}{\longrightarrow} \cdots \overset{\pb^{n-1}_k}{\longrightarrow} \mathscr{E}^n_{k,\leq (C_k)^{-d}}\big/\Ker \Box^n_k\overset{\pb^n_k}{\longrightarrow} 0 
\] is an exact sequence, we can deduce 
\[
\sum_{q=0}^{n}(-1)^q\dim \left(\mathscr{E}^{q}_{k,\leq (C_k)^{-d}}\big/\Ker \Box^q_k\right)=0.
\]Hence,\[
\sum_{j=0}^{q}(-1)^{j}\dim\mathscr{H}^{0,j}(M,L_k)=\sum_{q=0}^{n}(-1)^q\dim \mathscr{E}^{q}_{k,\leq (C_k)^{-d}}=\int_{M}\text{Tr}P^{q}_{k,(C_k)^{-d}}(z,z)dV_{\omega}(z).
\]By the main theorems, we derive the asymptotic Riemann-Roch theorem (\ref{morse3}).
\end{proof}
We refer readers to the book \cite{M} of X. Ma and G. Marinescu for a comprehensive study of Bergman kernel and holomorphic Morse inequalities.

\newpage

\section{Localization and scaling method}\label{Chapter 2}

\subsection{Notations}
Let $(M,\omega)$ be a complex manifold with $\text{dim}_{\Cs}M=n$ where $\omega$ is a positive Hermitian $(1,1)$-form. $\omega$ induces a fibrewise Hermitian inner product $\la\cdot|\cdot\ra_{\omega}$ on the vector bundle of $(0,q)$-forms, $\TM$. For an open set $U\subset M$, denote by $\mathscr{C}^{\infty}(U)$ the space of smooth functions on $U$ and by $\mathscr{C}^{\infty}_c(U)$ the subspace of $\mathscr{C}^{\infty}(U)$ consisting of elements with compact support in $U$. Let $\Omega^{0,q}(U)$ be the space of smooth $(0,q)$-forms over $U$ and $\Omega^{0,q}_c(U)$ be the subspace of $\Omega^{0,q}(U)$ whose elements have compact support in $U$. Let $L^2_{\omega}(U,\TM)$ be the completion of $\Omega^{0,q}_c(U)$ with respect to the inner product $(\cdot|\cdot)_{\omega,U}:=\int_U\la\cdot|\cdot\ra_{\omega}dV_\omega$, where $dV_{\omega}:= \omega^n/n!$ is the volume form. If $A$ is a bounded linear map from $L^2_{\omega}(U,\TM)$ to itself, denote $\|A\|_{\omega,U}$ as the operator norm. 

For a holomorphic Hermitian line bundle $(L, h^L)$ over $M$, let $s$ be a local holomorphic trivializing section of $L$ over an open subset $U$ of $M$. The Hermitian metric $h^{L}$ locally corresponds to a weight function $\phi:U \To \R$ such that $|s|^2_{h^L}=e^{-2\phi}$. Denote by $\la \cdot|\cdot\ra_{\phi}:=\la \cdot|\cdot\ra_{h^{L}}$ the fibrewise Hermitian inner product $h^{L}$ on $L$ for convenience. Let $\la\cdot|\cdot\ra_{\omega,\phi}$ be the fibrewise Hermitian inner product of $L\otimes \TM$ induced by $h^L$ and $\omega$ ( cf. \ref{0406}). Let $\Omega^{0,q}(U,L)$ be the space of $L$-valued smooth $(0,q)$-forms with domain $U$ and $\Omega^{0,q}_c(U,L)$ be the subspace of $\Omega^{0,q}(U,L)$ whose elements have compact support in $U$. Define $L^2_{\omega,\phi}(U,\TM\otimes L)$ to be the completion of $\Omega^{0,q}_c(U,L)$ with respect to the inner product $(\cdot|\cdot)_{\omega,\phi,U}:=\int_{U}\la\cdot|\cdot\ra_{\omega,\phi}dV_{\omega}$. If $A$ is a bounded linear map from $L^2_{\omega,\phi}(U,\TM\otimes L)$ to itself, denote $\|A\|_{\omega,\phi,U}$ as the operator norm. Sometimes, we may drop $U$ and write $\|\cdot\|_{\omega,\phi}:=\|\cdot\|_{\omega,\phi,M}$ if there is no risk of ambiguity.

\par

For a holomorphic complex chart $\psi:D\subset M\To\psi(D)\subset \Cn$, we locally have the complex coordinate $z=(z^1,\cdots,z^n)$. We denote $\N_0:=\N\cup\{0\}$ and adopt the standard notation $z^{\alpha}$ for multi-index $\alpha=(\alpha_1,\cdots,\alpha_n)\in (\N_0)^n$. We say that a multi-index $I=(i_1,\cdots,i_q)\in(\N_0)^{q}$ is strictly increasing if $1\leq i_1<\cdots <i_q\leq n$ and denote $d\zb^I:=d\zb^{i_1}\wedge\cdots\wedge d\zb^{i_q}$. A $(0,q)$-form $u$ on $M$ can be locally written as
\[
u\mid_{V}=\sideset{}{'}\sum_{|I|=q} u_I(z)d\zb^I,
\]where $\sum'$ means that the summation is performed only over strictly increasing multi-indices. We denote by $dm$ the standard Lebesgue measure on $\Cn$. For $r>0$, we denote $B(r):=\{z\in\Cn;|z|<r\}$ to be the open ball centered at $0\in\Cn$ with radius $r$.

\subsection{Localization}
Recall that we fix a point $p\in M$ and make Assumption \ref{assumption}.
To localize the problem, we take $D$ as a holomorphic complex chart $\psi: D \To\psi(D)\subset \Cn$ such that $\psi(p)=0$ and \begin{equation}\label{loc eq 2}
\omega(0)=\ii\sum_{i=1}^{n}dz^i\wedge d\zb^i\quad ;\quad R(0)=\sum_{i=1}^{n}2\lambda_idz^i\wedge d\zb^i.
\end{equation} Without loss of generality, we may assume that $D$ is pseudoconvex, $\psi(D)$ is convex and identify $D$ with $\psi(D)$ and $p$ with $0$ by abuse of notation. We now construct a sequence of non-vanishing holomorphic sections of $L_k$ over $D$ by the approach in \cite[Lemma 2.1]{01}. Let $\sigma_k$ be the non-vanishing section of $L_k\To D$ that is parallel with respect to the Chern connenction $\nabla^{L_k}$ along the segments $\{t\cdot 0+(1-t)\cdot q\mid t\in[0,1]\}$ for all $q\in D$. Let $\Gamma_{\sigma_k}$ be the connection form of $\nabla^{L_k}$ with respect to $\sigma_{k}$. That is, $\nabla^{L_k}\sigma_k=\Gamma_{\sigma_k}\otimes\sigma_k$. Let $(\nabla^{L_k})^{0,1}$ be the $(0,1)$-part of the Chern connection. We have $(\nabla^{L_k})^{0,1}=\pb$ where $\pb$ is the standard Cauchy-Riemann operator. We see \[
0=\pb\left((\nabla^{L_k})^{0,1}\sigma_k\right)=\left(\pb(\Gamma_{\sigma_k})^{0,1}\right)\otimes\sigma_k-(\Gamma_{\sigma_k})^{0,1}\wedge(\Gamma_{\sigma_k})^{0,1}\otimes \sigma_k\Rightarrow\pb(\Gamma_{\sigma_k})^{0,1}=0.
\]By the $\pb$-Poincaré lemma, there exist functions $f_k\in\mathscr{C}^{\infty}(V)$ such that $\pb f_k=(\Gamma_{\sigma_k})^{0,1}$. Define \[
\Tilde{s_k}:=e^{-f_k}\sigma_k.
\] Note that $\pb \ts_k=e^{-f_k}\left((\Gamma_{\sigma_k})^{0,1}-(\nabla^{L_k})^{0,1}\right)\sigma_k=0$ which means $\ts_k$ are holomorphic sections. Next, we restrict the domain $V$ of $\Tilde{s}_k$ to an open ball and assume $B(1)\subset V$ for convenience. Let $\Tilde{\phi}_k$ be the weight function of $\ts_k$. By multiplying a constant on $\ts_k$, we may assume $\Tilde{\phi}_k(0)=0$ and have the expansion:
\begin{multline*}
    \Tilde{\phi}_k(z)=\sum_{i=1}^n\left(\dfrac{\p \Tilde{\phi}_k}{\p z^i}(0)z^i+\dfrac{\p \Tilde{\phi}_k}{\p \zb^i}(0)\zb^i\right)\\+\dfrac{1}{2!}\sum_{i,j=1}^{n}\left(\dfrac{\p^2 \Tilde{\phi}_k}{\p z^i \p z^j}(0)z^i z^j+\dfrac{\p^2 \Tilde{\phi}_k}{\p z^i \p \zb^j}(0)z^i \zb^j + \dfrac{\p^2 \Tilde{\phi}_k}{\p \zb^i \p \zb^j}(0)\zb^i \zb^j\right)+O_k(|z|^3).
\end{multline*}We denote $O_k$ as the k-dependent big-O notation, which means that the values of the constants in the big-O estimates depend on the parameter $k$.  Set \[F_k(z):= \sum_{i=1}^{n}\dfrac{\p \Tilde{\phi}_k}{\p z^i}(0)z^i+\sum_{i,j=1}^{n}\dfrac{\p^2 \Tilde{\phi}_k}{\p z^i \p z^j}(0)z^i z^j,\] which are holomorphic functions and let
\begin{equation}\label{loc s_k}
s_k:=e^{-F_k}\ts_k.
\end{equation}In this way, we denote by $\phi_k$ the weight functions of $s_k$ and get
\begin{equation*}
\phi_k(z)=\sum_{i,j=1}^{n}\lambda_{k,i,j}z^i\zb^j+O_k(|z|^3),   
\end{equation*}where $\lambda_{k,i,j}:=\dfrac{1}{2!}\dfrac{\p^2 \Tilde{\phi}_k}{\p z^i \p \zb^j}(0)$. By Assumption \ref{assumption}, we observe that \[
\begin{cases}
    \lambda_{k,i,j}=C_k\delta^{i}_{j}\lambda_i+\varepsilon_{k,i,j} \quad \text{where} \,\, \varepsilon_{k,i,j}=o(C_k);\\
    O_k(|z|^3)=C_k O(|z|^3).
\end{cases}
\] Here, $\varepsilon_{k,i,j}$ is a sequence of numbers which satisfy $\varepsilon_{k,i,j}C_k^{-1}\To 0$. We now define\[ \phi_0(z):=\sum_{i=1}^{n}\lambda_i|z^i|^2,\] and then we can write
\begin{equation}\label{loc eq 3}
\phi_k(z)=C_k\phi_0(z)+\sum_{i,j=1}^{n}\varepsilon_{k,i,j}z^i\zb^j+C_kO(|z|^3) \quad \text{where}\,\, \varepsilon_{k,i,j}=o(C_k).
\end{equation}
\subsection{Scaling method}
To begin with, we define the scaled metric $\phi_{(k)}$ and scaled Hermitian form $\omega_{(k)}$ which are defined on $B(\sqrt{C_k})\subset \Cn$ by
\[
\phi_{(k)}(z):=\phi_k(\frac{z}{\sqrt{C_k}})  \quad \text{and} \quad \omega_{(k)}(z):=\omega(\frac{z}{\sqrt{C_k}}),
\]respectively. Denote by $\omega_0$ the standard Hermitian form $\ii\sum_{i=1}^{n}dz^i\wedge d\zb^i$ on $\Cn$. By the identity (\ref{loc eq 3}) and the fact that $\omega(0)=\omega_0(0)$, we have the property: \begin{prop}[convergence of scaled metrics]
\begin{equation}\label{sca con}
\phi_{(k)}\To \phi_0 \quad \text{and} \quad \omega_{(k)}\To \omega_0  \quad \text{in}\,\, \mathscr{C}^{\infty}\text{-topology}.
\end{equation}The convergent rates in (\ref{sca con}) smoothly depend on the point $p$ chosen before, since $\phi$ and $\omega$ are smooth on the manifold $M$.    
\end{prop}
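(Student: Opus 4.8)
The plan is to deduce both convergences directly from the local normal form (\ref{loc eq 3}) already established above, keeping careful track of how $\mathscr{C}^\infty$-seminorms transform under the dilation $z\mapsto z/\sqrt{C_k}$. For $\omega_{(k)}$ I would write the coefficients of $\omega-\omega_0$ as smooth functions $g_{ij}$ on $D$ with $g_{ij}(0)=0$ (possible since $\omega(0)=\omega_0(0)$), so that over a fixed ball $\overline{B(\rho)}$ and for $k$ large the coefficients of $\omega_{(k)}-\omega_0$ are $z\mapsto g_{ij}(z/\sqrt{C_k})$, and $\p^\alpha\big(g_{ij}(z/\sqrt{C_k})\big)=C_k^{-|\alpha|/2}(\p^\alpha g_{ij})(z/\sqrt{C_k})$. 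For $|\alpha|\geq 1$ this is bounded on $\overline{B(\rho)}$ by $C_k^{-|\alpha|/2}\|g_{ij}\|_{\mathscr{C}^{|\alpha|}(\overline{B(1)})}\to 0$, and for $|\alpha|=0$ by the modulus of continuity of $g_{ij}$ at $0$ evaluated at $\rho/\sqrt{C_k}$, which also tends to $0$. Hence $\omega_{(k)}\to\omega_0$ in $\mathscr{C}^\infty$ on compact subsets of $\Cn$.

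For $\phi_{(k)}$ I would substitute $z/\sqrt{C_k}$ into (\ref{loc eq 3}) and treat the three resulting terms separately. The quadratic term contributes $C_k\phi_0(z/\sqrt{C_k})=\phi_0(z)$ exactly, because $\phi_0$ is homogeneous of degree $2$. The term $\sum_{i,j}\varepsilon_{k,i,j}z^i\zb^j$ becomes $C_k^{-1}\sum_{i,j}\varepsilon_{k,i,j}z^i\zb^j$, a fixed-degree polynomial whose coefficients $\varepsilon_{k,i,j}C_k^{-1}$ all tend to $0$, so it converges to $0$ together with all its derivatives uniformly on compact sets. Finally, set $r_k:=\phi_k-C_k\phi_0-\sum_{i,j}\varepsilon_{k,i,j}z^i\zb^j$, the $C_k\,O(|z|^3)$-remainder of (\ref{loc eq 3}), read in the $\mathscr{C}^\infty$ sense inherited from Assumption \ref{assumption}: $\p^\alpha r_k(0)=0$ for $|\alpha|\leq 2$ and $\sup_{\overline{B(1/2)}}|\p^\alpha r_k|\leq C_k\,c_\alpha$ with $c_\alpha$ independent of $k$. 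Then for $|\alpha|\geq 3$, $|\p^\alpha\big(r_k(z/\sqrt{C_k})\big)|=C_k^{-|\alpha|/2}|(\p^\alpha r_k)(z/\sqrt{C_k})|\leq C_k^{1-|\alpha|/2}c_\alpha\leq C_k^{-1/2}c_\alpha$, while for $|\alpha|\leq 2$ Taylor's theorem for $\p^\alpha r_k$ (which vanishes to order $3-|\alpha|$ at $0$) gives $|(\p^\alpha r_k)(w)|\leq C_k\,c'_\alpha|w|^{3-|\alpha|}$, so on $\overline{B(\rho)}$ one obtains $|\p^\alpha\big(r_k(z/\sqrt{C_k})\big)|\leq C_k^{-|\alpha|/2}\,C_k\,c'_\alpha(\rho/\sqrt{C_k})^{3-|\alpha|}=c'_\alpha\,\rho^{3-|\alpha|}\,C_k^{-1/2}$. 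In every case the bound on $\sup_{\overline{B(\rho)}}|\p^\alpha\big(r_k(\cdot/\sqrt{C_k})\big)|$ is $O(C_k^{-1/2})$, so $r_k(\cdot/\sqrt{C_k})\to 0$ in $\mathscr{C}^\infty$ on compact sets; summing the three contributions gives $\phi_{(k)}\to\phi_0$.

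For the closing sentence of the proposition, every constant appearing above is a sup-norm of finitely many derivatives of $\omega$ or $\phi$ over a fixed coordinate neighbourhood of $p$, together with Taylor data of $\phi_k$ at $p$; since the chart, the sections $s_k$ of (\ref{loc s_k}), and hence all these quantities are constructed so as to depend smoothly on $p$, the convergence rates in (\ref{sca con}) depend smoothly on $p$ as well. I expect the only delicate point to be the bookkeeping of powers of $C_k$ in the remainder estimate: one must use simultaneously the uniform-in-$k$ bound on the derivatives of $r_k$ and its vanishing to third order at the origin, and it is exactly the interplay of these two facts that produces the decisive gain $C_k^{-1/2}$ for \emph{all} derivative orders. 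Everything else is a direct computation.
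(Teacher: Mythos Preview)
Your proposal is correct and is precisely a fleshed-out version of the paper's own approach: the paper states the proposition as an immediate consequence of the identity (\ref{loc eq 3}) together with $\omega(0)=\omega_0(0)$ and gives no further details, while you carry out the underlying dilation/Taylor computation explicitly. Your bookkeeping of the $C_k$-powers for the remainder $r_k$ (using simultaneously the uniform-in-$k$ derivative bounds and the third-order vanishing at the origin to obtain the uniform $C_k^{-1/2}$ gain) is exactly what the paper's one-line justification is implicitly relying on.
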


 Inspired by this fact, we define the scaled line bundles \[
L_{(k)} \To B(\sqrt{C_k})
\]which are trivial line bundles with trivializing holomorphic sections \[s_{(k)}(z):= s_k(\frac{z}{\sqrt{C_k}}):B(\sqrt{C_k})\To L_k\simeq L_{(k)}.\]
The scaled line bundle has a scaled metric induced by $h_{(k)}$. That is, \[ \la s_{(k)}(z)|s_{(k)}(z)\ra_{\phi_{(k)}}:=\la s_{(k)}(z)|s_{(k)}(z)\ra_{h_{(k)}}=e^{-2\phi_{(k)}(z)}.\]
We have a fibrewise  inner product $\la\cdot|\cdot\ra_{\omega_{(k)},\phi_{(k)}}$ on the vector bundle $\T\otimes L_{(k)}$ over $B(\sqrt{C_k})$ induced by scaled metric $h_{(k)}$ and scaled Hermitian form $\omega_{(k)}$. That is,
\begin{align*}
\la\eta_1\otimes s_{(k)}|\eta_2\otimes s_{(k)}\ra_{\omega_{(k)},\phi_{(k)}}(z)&=\la \eta_1(z)|\eta_2(z)\ra_{\omega_{(k)}}e^{-2\phi_{(k)}(z)},
\end{align*}for all $\eta_i\in\Omega^{0,q}(B(\sqrt{C_k}))$. By changing variables, we have the unitary identifications:
\begin{align}
L^2_{\omega,\phi_k}(B(1),\TM\otimes L_k)&\cong L^2_{\omega_{(k)},\phi_{(k)}}(B(\sqrt{C_k}),\T\otimes L_{(k)})\,\text{by} \label{sca i1} \\
\eta \otimes s_{k} &\leftrightarrow C_k^{-n/2}\eta(z/\sqrt{C_k})\otimes s_{(k)},\notag\end{align} and \begin{align}
L^2_{\omega}(B(1),\TM)&\cong L^2_{\omega_{(k)}}(B(\sqrt{C_k}),\T)\,\text{by} \label{sca i2}\\
\eta &\leftrightarrow C_k^{-n/2}\eta(z/\sqrt{C_k}).\notag
\end{align}Moreover, there are unitary identifications:
\begin{align}
 L^2_{\omega,\phi_k}(B(1),\TM\otimes L_k)&\cong L^2_{\omega}(B(1),\TM) \,\text{by} \label{sca i3}\\
 \eta\otimes s_k &\leftrightarrow e^{-\phi_k}\eta, \notag 
\end{align} and 
\begin{align}
L^2_{\omega_{(k)},\phi_{(k)}}(B(\sqrt{C_k}),\T\otimes L_{(k)})&\cong L^2_{\omega_{(k)}}(B(\sqrt{C_k}),\T) \, \text{by} \label{sca i4}\\ \eta\otimes s_{(k)} &\leftrightarrow e^{-\phi_{(k)}}\eta.\notag  
\end{align}
The four unitary maps introduced above form a commute diagram.

\subsection{Scaled Laplacians and the elliptic estimates}
The $L_k$-valued Cauchy-Riemann operator is denoted by \[\pb^{q}_{k}:\Dom \pb^q_k \subset L^2_{\omega,\phi_k}(M,\TM\otimes L_k) \To L^2_{\omega,\phi_k}(M,T^{*,(0,q+1)}M\otimes L_k),\] where $\Dom \pb^{q}_k:=\{u\in L^2_{\omega,\phi_k}(M,T^{*,(0,q)}M\otimes L_k);\pb^{q}_ku\in L^2_{\omega,\phi_k}(M,\TM\otimes L_k)\}$. Denote 
\[
\pb^{q,*}_k:\Dom \pb^{q,*}_k\subset L^2_{\omega,\phi_k}(M,\TM\otimes L_k)\To L^2_{\omega,\phi_k}(M, T^{*,(q-1)}\otimes L_k) 
\] as the adjoint of $\pb^{q-1}_{k}$ with respect to $(\cdot|\cdot)_{\omega,\phi_k}$. The Kodaira Laplacian $\Box^q_k$ is defined by\[
\Box^q_k:=\pb_k^{q+1,*}\pb_k^{q}+\pb_k^{q-1}\pb_k^{q,*}:\Dom \Box^q_k \subset L^2_{\omega,\phi_k}(M,\TM\otimes L_k) \To L^2_{\omega,\phi_k}(M,\TM\otimes L_k).
\] This is the Gaffney extension of $\Box^q_k$ where $\Dom \Box^q_k$ is given by (cf.\cite{Gaffney}) 
\begin{equation}\label{1154}
\Dom \Box^{q}_{k}:=\{u\in \Dom \pb^{q}_k \cap \Dom \pb^{q,*}_k\mid \pb^{q}_k u\in \Dom \pb^{q,*}_k\,\text{\rm and }\,\pb^{q,*}_k u\in \Dom \pb^{q-1}_k\}. 
\end{equation} By the identification (\ref{sca i3}), we can translate differential operators acting on sections of $\TM\otimes L_k\mid_{B(1)}$ into operators acting on sections of $\TM\mid_{B(1)}$. Define the localized Cauchy-Riemann operator $\pb^{q}_{k,s}:\Omega^{0,q}(B(1))\To \Omega^{0,q+1}(B(1))$ such that
\[
\pb^{q}_k(\eta\otimes s_k)=e^{\phi_k}\pb^{q}_{k,s}(\eta e^{-\phi_k})\otimes s_k.
\]Let $\pb^{q,*}_{k,s}:\Omega^{0,q}(B(1))\To\Omega^{0,q-1}(B(1))$ be the operator such that\[
\pb^{q,*}_{k}(\eta\otimes s_k)=e^{\phi_k}\pb^{q,*}_{k,s}(\eta e^{-\phi_k})\otimes s_k.
\]Since (\ref{sca i3}) is unitary, we see that $\pb^{q,*}_{k,s}$ is the formal adjoint of $\pb^{q}_{k,s}$ with respect to $(\cdot|\cdot)_{\omega}$. Denote by $\pb^{q}$ the standard Cauchy-Riemann operator acting on smooth sections of $(0,q)$-forms and by $\pb^{q,*}_{\omega}$ the formal adjoint of $\pb^{q}$ with respect to $(\cdot|\cdot)_{\omega}$. We have 
\[
\pb^{q}_{k,s}=\pb^{q}+(\pb\phi_k)\wedge \cdot \quad ; \quad \pb^{q,*}_{k,s}=\pb^{q,*}_{\omega}+((\pb\phi_k)\wedge)^*_{\omega},
\]where $((\pb\phi_k)\wedge)^*_{\omega}$ is the fibrewise adjoint of the wedge operator $(\pb\phi_k)\wedge:\Omega^{0,q-1}\To\Omega^{0,q}$ with respect to $\la\cdot|\cdot\ra_{\omega}$.The localized Kodaira Laplacian is defined by\[
\Box^{q}_{k,s}:= \pb^{q-1}_{k,s}\pb^{q,*}_{k,s}+\pb^{q+1,*}_{k,s}\pb^{q}_{k,s}:\Omega^{0,q}(B(1))\To \Omega^{0,q}(B(1)).
\]  
In the same manner, the $L_{(k)}$-valued Cauchy-Riemann operator is denoted by \[\pb^{q}_{(k)}:\Omega^{0,q}(B(\sqrt{C_k}),L_{(k)})\To\Omega^{0,q+1}(B(\sqrt{C_k}),L_{(k)}),\] which is defined by $\pb^q_{(k)}(\eta\otimes s_{(k)})=(\pb^q\eta)\otimes s_{(k)}$ for all $\eta \in \Omega^{0,q}(B(\sqrt{C_k}))$. Denote \[ \pb^{q,*}_{(k)}:\Omega^{0,q}(B(\sqrt{C_k}),L_{(k)})\To \Omega^{0,q-1}(B(\sqrt{C_k}),L_{(k)})\] to be the formal adjoint of $\pb^{q-1}_{(k)}$ with respect to $(\cdot|\cdot)_{\omega_{(k)},\phi_{(k)}}$. Next, we consider the scaled localized Cauchy-Riemann operator
\[\pb^{q}_{(k),s}:\Omega^{0,q}(B(\sqrt{C_k}))\To\Omega^{0,q+1}(B(\sqrt{C_k})),\] 
which is defined by 
$\pb^{q}_{(k)}(\eta\otimes s_{(k)})=:e^{\phi_{(k)}}\pb^{q}_{(k),s}(\eta e^{-\phi_{(k)}})\otimes s_{(k)}$.  On the other hand, define 
\[\pb^{q,*}_{(k),s}:\Omega^{0,q}(B(\sqrt{C_k}))\To\Omega^{0,q-1}(B(\sqrt{C_k}))\] by $\pb^{q,*}_{(k)}(\eta\otimes s_{(k)})=:e^{\phi_{(k)}}\pb^{q,*}_{(k),s}(\eta e^{-\phi_{(k)}})\otimes s_{(k)}$ which is the formal adjoint of $\pb^q_{(k),s}$ with respect to $(\cdot|\cdot)_{\omega_{(k)}}$. Since (\ref{sca i4}) is unitary, we have similar identities:
\[
\pb^{q}_{(k),s}=\pb^{q}+(\pb\phi_{(k)})\wedge \cdot \quad ; \quad \pb^{q,*}_{(k),s}=\pb^{q,*}_{\omega_{(k)}}+((\pb\phi_{(k)})\wedge)^*_{\omega_{(k)}}.
\]The scaled localized Kodaira Laplacian is given by\[
\Box^{q}_{(k),s}:= \pb^{q-1}_{(k),s}\pb^{q,*}_{(k),s}+\pb^{q+1,*}_{(k),s}\pb^{q}_{(k),s}:\Omega^{0,q}(B(\sqrt{C_k}))\To \Omega^{0,q}(B(\sqrt{C_k})).
\] We have the relations: 
\begin{equation}\label{sca pb}
    (\pb^{q}_{(k),s}u)(\sqrt{C_k}z)=\frac{1}{\sqrt{C_k}}\pb^{q}_{k,s}(u(\sqrt{C_k}z))\quad;\quad(\pb^{q,*}_{(k),s}u)(\sqrt{C_k}z)=\frac{1}{\sqrt{C_k}}\pb^{q,*}_{k,s}(u(\sqrt{C_k}z)).
\end{equation} Hence,
\begin{equation}\label{sca laplacian}
\left(\Box^{q}_{(k),s}u\right)(\sqrt{C_k}z)=\frac{1}{C_k}\Box^{q}_{k,s}(u(\sqrt{C_k}z)). 
\end{equation}
Next, we consider 
 the \textbf{model case} $\Cn$ equipped with the weight function $\phi_0=\sum_{i=1}^n\lambda_i|z^i|^2$ and standard Hermitian form $\omega_0=\ii\sum dz^i\wedge d\zb^i$. The wight function $\phi_0$ defines a Hermitian metric on the trivial line bundle $\Cs\To\Cn$ with $|1|^2_{\phi_0}(z)=e^{-2\phi_0(z)}$. We can define the fiberwise Hermitian metric $\la\cdot|\cdot\ra_{\omega_0,\phi_0}$ on $\T\otimes \Cs\To \Cn$ with respect to $\omega_0$ and $\phi_0$. Let 
\[
\pb^{q}_{0}:\Dom\pb^q_0\subset L^2_{\omega_0,\phi_0}(\Cn,\T\otimes\Cs)\To L^2_{\omega_0,\phi_0}(\Cn,\T\otimes\Cs)
\] be the Cauchy-Riemann operator with values in the trivial line bundle and let $\pb^{q,*}_{0}$ be the formal adjoint of $\pb^{q}_{0}$ with respect to $(\cdot|\cdot)_{\omega_0,\phi_0}$. Denote by \begin{equation}\label{lapl mod 1}
\Box^{q}_{0}:= \pb^{q-1}_{0}\pb^{q,*}_{0}+\pb^{q+1,*}_{0}\pb^{q}_{0}:\Dom\Box^q_0\subset L^2_{\omega_0,\phi_0}(\Cn,\T)\To L^2_{\omega_0,\phi_0}(\Cn,\T)
\end{equation}the Kodaira Laplacian. In the same way, we define the localized Kodaira Laplacian $\Box^{q}_{0,s}$ by \begin{equation}\label{lapl mod 2}
\Box^{q}_{0,s}:=\pb^{q-1}_{0,s}\pb^{q,*}_{0,s}+\pb^{q+1,*}_{0,s}\pb^{q}_{0,s}:\Dom\Box^{q}_{0,s}\subset L^2_{\omega_0}(\Cn,\T)\To L^2_{\omega_0}(\Cn,\T),
\end{equation}where 
\[
\pb^{q}_{0,s}:=\pb^{q}+(\pb\phi_0)\wedge \quad;\quad \pb^{q,*}_{0,s}:=\pb^{q,*}_{\omega_0}+((\pb\phi_0)\wedge)^*_{\omega_0}.
\]
For $m\in\R$ and $u=\sum'_{|I|=q}u_{I}d\zb^I\in \Omega^{0,q}_c(\Cn)$, we adopt the Sobolev norm $\|\cdot\|_{m}$ as
\[
\|u\|^2_{m}:=\sideset{}{'}\sum_{|I|=q}\left(\int_{\Cn}(1+|\xi|^2)^m|\widehat{u}_I(\xi)|^2 dm(\xi)\right),
\] where $\widehat{u}_I(\xi):=(2\pi)^{-n/2}\int _{\Cn}u_I(x)e^{-i\xi \cdot x}dm$. For an open set $U\subset\Cn$, the Sobolev space $W_c^{m}(U,\T)$ is the Banach space given by the completion of $\Omega^{0,q}_c(U)$ with respect to $\|\cdot\|_{m}$. Moreover,  when $m=0$, $\|\cdot\|_{0}$ coincides with the standard $L^2$-norm $\|\cdot\|_{\omega_0}$ induced by the standard Hermitian form $\omega_0$. Hence, we know $W^0_c(U,\T)$ is a Hilbert space with its inner product given by \[
(\cdot|\cdot)_0:=(\cdot|\cdot)_{\omega_0}.
\] By (\ref{sca con}), we deduce that the coefficients of $\Box^{q}_{(k),s}$ converge to the ones of $\Box^{q}_{0,s}$ local uniformly in $\mathscr{C}^{\infty}$-topology and simply write $\Box^q_{(k),s}\To \Box^{q}_{0,s}$. By the elliptic estimate, we have the following Lemma:
\begin{lem}[k-uniform  elliptic estimate]\label{lem elliptic estimate}

   For any bounded domain $U$ and integers $m\in \N$, there is a constant $C(U,p)$ which is continuously dependent on $p$ chosen before and independent of $k$ such that
   \begin{equation*}
       \|u\|_{2m} \leq C(U,p) \left(\|u\|_{0} + \|(\Box^{q}_{(k),s})^{m} u\|_{0}\right)
   \end{equation*}
   for all $u \in W^{2m}_c(U,\T)$ and large enough $k$.
   \end{lem}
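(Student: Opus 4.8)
The plan is to establish the estimate by combining the standard interior elliptic estimate for the single operator $\Box^q_{(k),s}$ on $\Cn$ with a compactness/contradiction argument that upgrades it to a bound uniform in $k$. First I would recall that for each fixed $k$ the operator $\Box^q_{(k),s}$ is a second-order elliptic operator with smooth coefficients on $B(\sqrt{C_k})$, so the classical G\r{a}rding inequality gives, for every bounded $U'\Subset B(\sqrt{C_k})$ and every $m\in\N$, a constant $C(U',k)$ with $\|u\|_{2m}\leq C(U',k)(\|u\|_0+\|(\Box^q_{(k),s})^m u\|_0)$ for $u\in W^{2m}_c(U,\T)$ (iterating the order-two estimate $m$ times, absorbing lower-order terms). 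The content of the lemma is that $C$ may be chosen independent of $k$ (and continuous in $p$).

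The key mechanism is the convergence $\Box^q_{(k),s}\To\Box^q_{0,s}$ in $\mathscr{C}^\infty$-topology on compact sets, which was deduced from the convergence of scaled metrics (\ref{sca con}); in particular, on any fixed bounded $U$ all coefficients of $\Box^q_{(k),s}$ and of their derivatives up to order $2m$ are bounded uniformly in $k$, and the principal symbol is uniformly elliptic (it converges to that of $\Box^q_{0,s}$, which is a nonzero multiple of $|\xi|^2$ since $\omega_{(k)}\to\omega_0$). I would then invoke the quantitative form of the elliptic estimate: the constant in the interior estimate for a second-order elliptic operator depends only on (i) an ellipticity constant (a lower bound on the principal symbol and an upper bound on the coefficients) and (ii) $\mathscr{C}^{2m-1}$-bounds on the coefficients on a slightly larger domain. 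Since all these quantities are bounded uniformly in $k$ on $U$ (for $k$ large, so that $U\Subset B(\sqrt{C_k})$), one obtains a single constant $C(U,p)$ that works for all large $k$; the continuous dependence on $p$ comes from the last sentence of the Proposition preceding the lemma, since the convergence rates in (\ref{sca con}), hence the uniform coefficient bounds, depend smoothly on $p$.

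Concretely the steps are: (1) fix $U$ and $m$; choose $k$ large enough that $\overline{U}\subset B(\sqrt{C_k})$; (2) write $\Box^q_{(k),s}=\Box^q+(\pb\phi_{(k)})\wedge\cdot+\pb^{q,*}_{\omega_{(k)}}+((\pb\phi_{(k)})\wedge)^*_{\omega_{(k)}}$ composed appropriately, and note that $\Box^q$ (the flat Kodaira Laplacian) is, componentwise, the scalar Laplacian, while the remaining terms are lower-order with $\mathscr{C}^\infty$-coefficients bounded uniformly in $k$ on $U$ by (\ref{sca con}); (3) apply the base case $m=1$: the standard Sobolev elliptic estimate $\|u\|_2\leq C(\|u\|_0+\|\Box^q_{(k),s}u\|_0)$ with $C$ depending only on the ellipticity constant and $\mathscr{C}^1$-bounds of coefficients, both uniform in $k$; (4) iterate — apply the $m=1$ estimate to $(\Box^q_{(k),s})^{m-1}u$, then commute derivatives through $\Box^q_{(k),s}$, the commutators being lower-order operators with uniformly bounded coefficients, and absorb via a standard interpolation (Young's inequality on Sobolev norms) — to get the order-$2m$ estimate with a uniform constant; (5) track that every constant introduced is controlled by the $\mathscr{C}^{2m}$-norms of $\phi_{(k)},\omega_{(k)}$ on $U$, which converge and hence are uniformly bounded, with bounds continuous in $p$.

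\textbf{Main obstacle.} The delicate point is step (4): making the iteration genuinely $k$-uniform requires that the commutators $[\partial^\alpha,\Box^q_{(k),s}]$ have coefficients bounded in $\mathscr{C}^{j}$ uniformly in $k$, which is exactly what the $\mathscr{C}^\infty$-convergence $\Box^q_{(k),s}\to\Box^q_{0,s}$ provides on the fixed compact $\overline{U}$ — so the argument hinges on quoting (\ref{sca con}) with enough derivatives. A secondary subtlety is that $u$ has compact support in $U$ but $U$ itself need not be compactly contained in $B(\sqrt{C_k})$ for small $k$; this is harmless because the statement is only claimed for large $k$, and for such $k$ we do have $\overline{U}\subset B(\sqrt{C_k})$, so no boundary terms appear and the interior estimate applies directly. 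I expect no genuinely new difficulty beyond careful bookkeeping of the uniform bounds.
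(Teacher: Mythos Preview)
Your proposal is correct and follows essentially the same approach as the paper: the paper does not give a detailed proof of this lemma but simply remarks, immediately before stating it, that the coefficients of $\Box^q_{(k),s}$ converge to those of $\Box^q_{0,s}$ locally uniformly in $\mathscr{C}^\infty$ (by (\ref{sca con})) and that the lemma then follows from the standard elliptic estimate. Your write-up supplies exactly the details behind that one-line justification --- uniform ellipticity of the principal symbol, uniform $\mathscr{C}^{2m}$-bounds on the coefficients, and iteration --- so there is nothing to correct.
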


\subsection{Scaled kernels and uniform bounds}\label{Aaaac}
To begin with, we start with the spectral theorem:
\begin{thm}\label{uni spe}\cite[theorem 2.5.1]{Davies}
    Let $P:\Dom P\subset \mathcal{H} \rightarrow \mathcal{H} $ be a self-adjoint operator on a Hilbert space $\mathcal{H}$. Then there exists a spectrum set $\text{\rm Spec}P \subset \R$ , a finite measure $\mu$  on $\text{\rm Spec}P \times \N$ and a unitary operator \begin{equation*} H:\mathcal{H} \rightarrow L^2_{d \mu}(\text{\rm Spec}P \times \N)
    \end{equation*}
    with the following properties: Set $h:\text{\rm Spec}\,P \times \N \rightarrow \R$ by $h(s,n):=s$. Then an element $f \in \mathcal{H}$ is in $\Dom P$ if and only if $h \cdot H(f) \in L^2(\text{\rm Spec}P \times \N,d \mu)$. In addition, we have \begin{equation*} Pf = H^{-1}\circ (h \cdot Hf) \quad \text{\rm for all }\, f \in \Dom\,P. 
    \end{equation*}
\end{thm}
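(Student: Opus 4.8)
This is the standard ``multiplication operator'' form of the spectral theorem for self-adjoint operators, so the plan is to reproduce the classical argument; I indicate the main steps, since in the write-up it is reasonable to cite \cite{Davies} and only recall the structure. The first step is the \emph{Cayley transform reduction}: set $U:=(P-i\Id)(P+i\Id)^{-1}$. Because $P$ is self-adjoint, $P\pm i\Id$ are bijections $\Dom P\to\mathcal H$ with bounded inverses, so $U$ is a unitary operator with $\Id-U$ injective and with dense range, and $P$ is recovered by $P=i(\Id+U)(\Id-U)^{-1}$ on $\Dom P=\Rang(\Id-U)$. Hence it suffices to build a multiplication model for the \emph{bounded} (indeed normal) operator $U$ and then transport it back through the Cayley transform; the point is that the domain characterization $f\in\Dom P\iff h\cdot Hf\in L^2$ is exactly what the transport of $\Rang(\Id-U)$ produces.

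The second step is the multiplication model for the bounded normal operator $U$, built in the usual three stages. \emph{(i)} The calculus $p\mapsto p(U,U^*)$ on polynomials in $z,\bar z$ is isometric for the sup-norm on $\mathrm{Spec}\,U$ (spectral radius formula plus the $C^*$-identity for the normal element $p(U,U^*)$), so by Stone--Weierstrass it extends to an isometric $*$-homomorphism $C(\mathrm{Spec}\,U)\to\mathcal B(\mathcal H)$. \emph{(ii)} For each $x\in\mathcal H$ the functional $g\mapsto (g(U)x\mid x)$ is positive on $C(\mathrm{Spec}\,U)$, so Riesz representation yields a finite positive Borel measure $\mu_x$ on $\mathrm{Spec}\,U$; this upgrades the calculus to bounded Borel functions and shows that $g(U)x\mapsto g$ extends to an isometry of the cyclic subspace $\overline{\{g(U)x:g\in C(\mathrm{Spec}\,U)\}}$ onto $L^2(\mathrm{Spec}\,U,\mu_x)$ intertwining $U$ with multiplication by the coordinate function. \emph{(iii)} Decompose $\mathcal H=\bigoplus_{n\in\N}\mathcal H_{x_n}$ into an orthogonal sum of cyclic subspaces (Zorn's lemma; the index set $\N$ reflects the implicit separability of $\mathcal H$), rescale the generators so that $\sum_n\|x_n\|^2<\infty$, and assemble the block isometries into a unitary $\mathcal H\xrightarrow{\sim}\bigoplus_n L^2(\mathrm{Spec}\,U,\mu_{x_n})\cong L^2_{d\mu}(\mathrm{Spec}\,U\times\N)$, where $\mu$ is the finite measure with $\mu|_{\mathrm{Spec}\,U\times\{n\}}=\mu_{x_n}$; under this unitary $U$ becomes multiplication by $(s,n)\mapsto s$.

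The final step is to push this model through the Cayley transform: $\mathrm{Spec}\,P$ gets identified with a subset of $\R$, the transported measure gives the desired finite $\mu$ on $\mathrm{Spec}\,P\times\N$ together with the unitary $H$, multiplication by $(s,n)\mapsto s$ on the $U$-side becomes multiplication by $h(s,n)=s$ on the $P$-side, and because $h$ is now unbounded the admissible $f$ are precisely those with $h\cdot Hf\in L^2$; one checks this set equals $H(\Dom P)=H(\Rang(\Id-U))$, and the formula $Pf=H^{-1}(h\cdot Hf)$ then follows from $P=i(\Id+U)(\Id-U)^{-1}$. The main technical obstacle is exactly this last bookkeeping: confirming that the Cayley transform intertwines the two multiplication models on the nose, that no spectral mass is lost at the excluded point $1\in\mathrm{Spec}\,U$ (``$\infty$'' for $P$), and --- the genuinely delicate point --- that the transported condition $h\cdot Hf\in L^2$ reproduces $\Dom P$ rather than a strictly larger or smaller space. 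The rest is routine $C^*$-algebra and measure theory.
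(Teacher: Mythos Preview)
The paper does not prove this theorem at all: it is quoted verbatim from \cite[Theorem 2.5.1]{Davies} and used as a black box to set up the functional calculus for $\Box^q_k$. There is therefore no ``paper's own proof'' to compare against. Your outline is a correct sketch of the classical Cayley-transform argument and would be appropriate if a proof were required; for the purposes of this paper, a bare citation to Davies is what is actually done.
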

Based on Theorem \ref{uni spe}, we know that $\Box_k^{q}$ has the spectrum set $\text{\rm Spec}\,\Box_k^{q}\subset [0,\infty)$ and there is a unitary map \[H^{q}_k:L^2_{\omega,\phi_k}(M,\TM\otimes L_k) \rightarrow L^2_{d \mu_k}(\text{\rm Spec }\Box_k^{q} \times \N)\] such that \[\Box_k^{q}u=(H^{q}_k)^{-1} \circ (h \cdot H^{q}_ku),\] for all $u \in \Dom \Box_k^{q}$. From now on, we let $a^q_k:[0,\infty)\To\R$ be a sequence of real-valued functions such that \[
a^q_k(s,n):=a^q_k(s)\in L^2_{d \mu_k}(\text{Spec} \Box^q_k\times \N).
\]Define 
\begin{equation}\label{map}
A^{q}_k:L^2_{\omega,\phi_k}(M,\TM\otimes L_k)\To L^2_{\omega,\phi_k}(M,\TM\otimes L_k)    
\end{equation}
to be the sequence of self-adjoint bounded linear maps defined by \begin{equation}\label{1156}
A^q_k:=(H_k)^{-1}\circ \left(a^q_k(s,n)\cdot H^q_k\right).
\end{equation}We call $A^q_k$ the functional calculus of $a^q_k$ and write $A^q_k=a^q_k(\Box^q_k)$. Define 
\[A^{q}_k(z,w)\in\mathscr{C}^{\infty}\left(M\times M;(\TM\otimes L_k)\boxtimes(\TM\otimes L_k)^*\right)\] to be Schwartz kernels of $A^q_k$, which we assume to be smooth. For such $A^q_k$, we scale the smooth kernels $A^q_k(z,w)$ in the open ball $B(1)$. First, define the localized kernel $A^{q,s}_{k}(z,w)\in \mathscr{C}^{\infty}(B(1)\times B(1),\TM\boxtimes \TM)$ by
\[
A^{q}_{k}(z,w)=:A^{q,s}_k(z,w)  s_k(z) \otimes (s_k(w))^*.
\]The \textbf{scaled kernels} $A^{q,s}_{(k)}(z,w)\in\mathscr{C}^{\infty}(B(\sqrt{C_k})\times B(\sqrt{C_k}),\T\boxtimes\T)$ are defined by \begin{equation*}
A^{q,s}_{(k)}(z,w):=C_k^{-n}A^{q,s}_k(\frac{z}{\sqrt{C_k}},\frac{w}{\sqrt{C_k}}).    
\end{equation*}
By (\ref{sca i3}),  we define the \textbf{localized map}$A^{q}_{k,s}:L^2_{\omega}(B(1),\TM)\To L^2_{\omega}(B(1),\TM)$ by \[\left(A^{q}_{k,s}(e^{-\phi_k}\eta)\right)\otimes s_k=e^{-\phi_k}A^{q}_k(\eta\otimes s_k).\] Also, by (\ref{sca i2}), we set the \textbf{scaled localized map} $A^{q}_{(k),s}$ acting on $L^2_{\omega_{(k)}}(B(\sqrt{C_k}),\T)$ by \begin{equation}\label{040701}
 \left(A^{q}_{(k),s}\eta\right)(\sqrt{C_k}z)=A^{q}_{k,s}(\eta(\sqrt{C_k}z)).   
\end{equation}Denote by $A^q_{k,s}(z,w)$ and  $A^{q}_{(k),s}(z,w)$ as \textbf{localized kernel} and \textbf{scaled localized kernel} which are the Schwartz kernels of $A^q_{k,s}$ and $A^{q}_{(k),s}$, respectively. The relations between the kernels introduced above are given by \begin{equation}\label{0407}
A^{q}_{k,s}(z,w)=e^{-\phi_k(z)}A^{q,s}_{k}(z,w)e^{\phi_k(w)} \quad ; \quad A^{q}_{(k),s}(z,w)=e^{-\phi_{(k)}(z)}A^{q,s}_{(k)}(z,w)e^{\phi_{(k)}(w)},    
\end{equation}
 and\[
A^q_{(k),s}(z,w)=C_k^{-n}A^q_{k,s}(\frac{z}{\sqrt{C_k}},\frac{w}{\sqrt{C_k}}).
\] For any $u\in\Omega^{0,q}_c(B(\sqrt{C_k}))$, we denote $u_k:={C_k}^{n/2}u(\sqrt{C_k}z)$ and observe the fact that $u_k\in\Omega^{0,q}_c(B(1))\subset \Omega^{0,q}_c(M)$. By changing variable,
\begin{align*}
\|A^q_{(k),s}u\|_{\omega_{(k)},B(\sqrt{C_k})}=\|A^q_{k,s}u_k\|_{\omega,B(1)}&\leq \|A^q_{k} (e^{\phi_k}u_k\otimes s_k)\|_{\omega,\phi_k,M}\leq \|A^q_k\|_{\omega,\phi_k} \|e^{\phi_k}u_k\otimes s_k\|_{\omega,\phi_k}\\
&=\|A^q_k\|_{\omega,\phi_k}\|u_k\|_{\omega,B(1)}=\|A^q_k\|_{\omega,\phi_k}\|u\|_{\omega_{(k)},B(\sqrt{C_k})},
\end{align*}where $\|A^q_k\|_{\omega,\phi_k}$ is the operator norm of $A^q_k$. This computation tells us that the operator norms of $A^q_{(k),s}$, $A^q_{k,s}$ and $A^q_{k}$ have the following relations:
\begin{equation}\label{uni 1}
\|A^q_{(k),s}\|_{\omega_{(k)},B(\sqrt{C_k})}=\|A^q_{k,s}\|_{\omega,B(1)}\leq \|A^q_{k}\|_{\omega,\phi_k,M}.
\end{equation}
Next, since $\omega$ is positive and smooth on the compact set $\overline{D}$, there exist positive constants $C_1(p)$ and $C_2(p)$ that continuously depend on the point $p$ chosen before such that 
\begin{equation}\label{compatible}
C_1(p)\|u\|_{\omega_0,B(\sqrt{C_k})}\leq \|u\|_{\omega_{(k)},B(\sqrt{C_k})}\leq C_2(p)\|u\|_{\omega_0,B(\sqrt{C_k})},
\end{equation}
for all $u\in\Omega^{0,q}(B(\sqrt{C_k}))$ and $k\in\N$. Hence, we can treat \[A^{q}_{(k),s}:L^2_{\omega_0}(B(\sqrt{C_k}),\T)\To L^2_{\omega_0}(B(\sqrt{C_k}),\T)\] a bounded map with the following estimate: \begin{equation}\label{uni 3}
C_1(p)\|A^q_{(k),s}\|_{\omega_0,B(\sqrt{C_k})}\leq \|A^q_{(k),s}\|_{\omega_{(k)},B(\sqrt{C_k})}\leq C_2(p) \|A^q_{(k),s}\|_{\omega_0,B(\sqrt{C_k})}. 
\end{equation}Moreover, by (\ref{sca con}), for any bounded domain $U\subset \Cn$ and  $\varepsilon>0$, there exists $k_0(p)\in\N$ depending on $p$ and is locally constant with respect to $p$ such that
\begin{equation}\label{uni 3.5}
    (1-\varepsilon)\aaa{(u,v)_{\omega_0,U}}\leq \aaa{(u,v)_{\omega_{(k)},U}}\leq (1+\varepsilon)\aaa{(u,v)_{\omega_0,U}},
\end{equation}for all $u,v\in \Omega^{0,q}_c(U)$ and $k>k_0(p)$. Hence, we also have
\begin{equation}\label{uni 4}
    (1-\varepsilon)\|A^q_{(k),s}\|_{\omega_0,U}\leq \|A^q_{(k),s}\|_{\omega_{(k)},U}\leq (1+\varepsilon)\|A^{q}_{(k),s}\|_{\omega_0,U},
\end{equation}for all $k>k_0(p)$. We now extend our estimation of the operator norm of $A^q_{(k),s}$ to the context of Sobolev space. For $m,k\in\N$, we define the number $N_{m,k}$ by\[
N_{m,k}:=\sup_{s\in [0,\infty)}(\dfrac{s}{C_k})^{m} a^q_k(s), 
\]where $A^q_k=a^q_k(\Box^q_k)$. Lemma \ref{lemma mapping property} and Theorem \ref{thm uniform bound} introduced later are adapted from \cite[Theorem 3.4, Theorem 3.5]{me}.

\begin{lem}[k-dependent smoothing property]\label{lemma mapping property}
Fix $\chi$ and $\rho$ in $\mathscr{C}^{\infty}_c(\Cn)$ and an integer $m\in\N$. There exists a constant $C(\chi,\rho,m,p)$ that continuously depends on $p$ chosen before such that\[
\|\chi A^{q}_{(k),s}\rho u\|_{2m}\leq C(\chi,\rho,m,p) \left(\|A^q_k\|_{\omega,\phi_k}+N_{m,k}+N_{2m,k}\right)\|u\|_{-2m},
\]for all $u\in W^{-2m}_c(\Cn,\T)$ and $k\in\N$ with $\supp \chi\cup\supp \rho\subset B(\sqrt{C_k})$. 
\end{lem}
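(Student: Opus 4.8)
The plan is to obtain the estimate as a composition of two one-sided bounds---an $L^2\to W^{2m}$ smoothing estimate for $A^q_{(k),s}$ and, dually, a $W^{-2m}\to L^2$ estimate---glued together by the $k$-uniform elliptic estimate of Lemma~\ref{lem elliptic estimate}. Fix once and for all a bounded domain $U\subset\Cn$ containing $\supp\chi\cup\supp\rho$ in its interior. By \ref{sca con} the coefficients of $\Box^q_{(k),s}$ converge in $\mathscr{C}^{\infty}$ on $U$ to those of $\Box^q_{0,s}$; hence on $U$ the $\Box^q_{(k),s}$ are uniformly elliptic with $k$-uniformly bounded smooth coefficients, and by \ref{compatible}--\ref{uni 4} the pairings $(\cdot|\cdot)_{\omega_{(k)}}$ and $(\cdot|\cdot)_{\omega_0}$ are $k$-uniformly comparable on $U$ (and on $B(\sqrt{C_k})$). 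Every constant below is $k$-independent and depends continuously on $p$ through these facts; I write $\lesssim$ for inequality up to such a constant, and I prove the estimate first for $u\in\Omega^{0,q}_c$ and then extend by density.

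\textbf{First half.} For cutoffs $\alpha,\beta\in\mathscr{C}^{\infty}_c(U)$ the claim is $\|\alpha A^{q}_{(k),s}\beta v\|_{2m}\lesssim(\|A^q_k\|_{\omega,\phi_k}+N_{m,k})\|v\|_0$. Apply Lemma~\ref{lem elliptic estimate} to the smooth compactly supported form $\alpha A^{q}_{(k),s}\beta v$ (smooth since $A^q_k$ has a smooth Schwartz kernel): it is bounded by $\|\alpha A^{q}_{(k),s}\beta v\|_0+\|(\Box^q_{(k),s})^m(\alpha A^{q}_{(k),s}\beta v)\|_0$. The $L^2$ term is $\lesssim\|A^{q}_{(k),s}\|_{\omega_0}\|v\|_0\lesssim\|A^q_k\|_{\omega,\phi_k}\|v\|_0$ by \ref{uni 1} and \ref{compatible}. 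In the other term, commute $(\Box^q_{(k),s})^m$ past $\alpha$; the commutator has order $\le 2m-1$ with $k$-uniformly bounded coefficients supported in $\supp\alpha$, and the main term is $\alpha\,(\Box^q_{(k),s})^m A^{q}_{(k),s}\beta v$. The crucial observation is that $(\Box^q_{(k),s})^m A^{q}_{(k),s}$ is again a scaled--localized functional--calculus operator: unwinding \ref{sca laplacian} and \ref{040701} together with the weight-twist definitions of $\Box^q_{k,s}$ and $A^q_{k,s}$ gives
\[
(\Box^q_{(k),s})^m A^{q}_{(k),s}=\bigl(\widetilde c^{\,q}_k(\Box^q_k)\bigr)_{(k),s},\qquad \widetilde c^{\,q}_k(s):=(s/C_k)^m\,a^q_k(s),
\]
so that by the spectral theorem $\|\widetilde c^{\,q}_k(\Box^q_k)\|_{\omega,\phi_k}=\sup_{s\ge 0}(s/C_k)^m a^q_k(s)=N_{m,k}$, whence $\|(\Box^q_{(k),s})^m A^{q}_{(k),s}\|_{\omega_0}\lesssim N_{m,k}$ by \ref{uni 1}, \ref{compatible}. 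The residual commutator term has lower Sobolev order and is absorbed by a downward induction on $m$, using the interpolation inequality $\|f\|_{2m-1}\le\varepsilon\|f\|_{2m}+C_\varepsilon\|f\|_0$ and a finite chain of nested cutoffs inside $U$, exactly as in \cite[Theorem 3.4]{me}; this proves the claim.

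\textbf{Second half and conclusion.} Since $A^q_k$ is self-adjoint for $(\cdot|\cdot)_{\omega,\phi_k}$, the operator $A^{q}_{(k),s}$ is self-adjoint for $(\cdot|\cdot)_{\omega_{(k)}}$ (being a unitary conjugate of a compression of $A^q_k$). Dualizing the first half against the $\omega_{(k)}$-pairing---the passage to the flat Sobolev pairing costing only a $k$-uniformly bounded multiplication operator by \ref{sca con}---yields $\|\alpha A^{q}_{(k),s}\beta u\|_0\lesssim(\|A^q_k\|_{\omega,\phi_k}+N_{m,k})\|u\|_{-2m}$. The operator $\widetilde c^{\,q}_k(\Box^q_k)$ is of the same type as $A^q_k$ with $\|A^q_k\|_{\omega,\phi_k},N_{m,k}$ replaced by $N_{m,k},N_{2m,k}$ (its level-$m$ constant being $\sup_{s\ge0}(s/C_k)^{2m}a^q_k(s)=N_{2m,k}$), so the same argument gives $\|\alpha(\Box^q_{(k),s})^m A^{q}_{(k),s}\beta u\|_0\lesssim(N_{m,k}+N_{2m,k})\|u\|_{-2m}$. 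Finally, apply Lemma~\ref{lem elliptic estimate} to $\chi A^{q}_{(k),s}\rho u$: $\|\chi A^{q}_{(k),s}\rho u\|_{2m}\lesssim\|\chi A^{q}_{(k),s}\rho u\|_0+\|(\Box^q_{(k),s})^m(\chi A^{q}_{(k),s}\rho u)\|_0$; split the last term into $\|\chi\,(\widetilde c^{\,q}_k(\Box^q_k))_{(k),s}\,\rho u\|_0$ plus a commutator term, bound the first by the dual estimate just proved and the commutator term by the same nested-cutoff/interpolation bootstrap, and bound $\|\chi A^{q}_{(k),s}\rho u\|_0$ by the $W^{-2m}\to L^2$ estimate; this gives the asserted inequality.

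\textbf{Main obstacle.} The real work is keeping every constant $k$-independent through the commutator and bootstrap steps; this hinges entirely on $\Box^q_{(k),s}\to\Box^q_{0,s}$ in $\mathscr{C}^{\infty}_{\mathrm{loc}}$ (so coefficients, their derivatives, and metric-comparison factors are $k$-uniformly bounded on the \emph{fixed} domain $U$) and on the uniform comparability of $\omega_{(k)}$ and $\omega_0$---uniformity that fails on the growing balls $B(\sqrt{C_k})$, which is precisely why the statement is local in the cutoffs. The other delicate point is verifying $(\Box^q_{(k),s})^m A^{q}_{(k),s}=(\widetilde c^{\,q}_k(\Box^q_k))_{(k),s}$, i.e.\ that applying the formal differential operator $(\Box^q_{(k),s})^m$ to the scaled--localized Schwartz kernel reproduces the scaled localization of $(\Box^q_k)^m A^q_k$; this uses the weight-twist definitions and \ref{sca laplacian}, \ref{040701}, and it is here that one sees boundary effects on $B(\sqrt{C_k})$ are irrelevant, because $\Box^q_{(k),s}$ is only ever used as a differential operator, never as a Gaffney extension.
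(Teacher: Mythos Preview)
Your proposal is correct and follows essentially the same architecture as the paper: the $L^2\to W^{2m}$ bound via the elliptic estimate combined with the identification of $(\Box^q_{(k),s})^m A^q_{(k),s}$ as the scaled--localized operator for the symbol $(s/C_k)^m a^q_k(s)$ (with operator norm $N_{m,k}$), followed by duality through the $\omega_{(k)}$-self-adjointness of $A^q_{(k),s}$ to get the $W^{-2m}\to L^2$ bound, and a final application of the elliptic estimate.

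The one tactical difference worth noting is that the paper applies the elliptic estimate in its \emph{interior} form, writing $\|\chi A^q_{(k),s}\rho u\|_{2m}\lesssim\|\tilde\chi A^q_{(k),s}\rho u\|_0+\|\tilde\chi(\Box^q_{(k),s})^m A^q_{(k),s}\rho u\|_0$ for a slightly larger cutoff $\tilde\chi\supset\chi$; this places $(\Box^q_{(k),s})^m$ directly on $A^q_{(k),s}\rho u$ and eliminates the commutator $[(\Box^q_{(k),s})^m,\chi]$ altogether, so no interpolation/nested-cutoff bootstrap is needed. Your route through the commutator is correct but adds bookkeeping that the paper's shortcut avoids.
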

\begin{proof}
We assume $u\in\Omega^{0,q}_c(\Cn)$ by density argument. We choose $U$ as a bounded open domain such that $\supp \chi \cup \supp\rho\subset U$. By Lemma \ref{lem elliptic estimate} and (\ref{compatible}), \begin{align}
\|\chi A^q_{(k),s}&\rho u\|_{2m}\leq C(U,m,p)\left(\|\Tilde{\chi}A^q_{(k),s}\rho u\|_0+\|\Tilde{\chi}(\Box^q_{(k),s})^m A^{q}_{(k),s}\rho u\|_0\right) \label{pf 1}\\
&\leq \Tilde{C}(U,m,p)\left(\|\Tilde{\chi}A^q_{(k),s}\rho u\|_{\omega_{(k)},B(\sqrt{C_k})}+\|\Tilde{\chi}(\Box^q_{(k),s})^m A^{q}_{(k),s}\rho u\|_{\omega_{(k)},B(\sqrt{C_k})}\right),\notag
\end{align}where $\Tilde{\chi}$ is a cut-off function with $\supp\chi \subset\supp \Tilde{\chi}\subset U$. By (\ref{uni 1}) and (\ref{compatible}), 
\begin{equation}\label{041504}
\|\Tilde{\chi}A^q_{(k),s}\rho u\|_{\omega_{(k)},B(\sqrt{C_k})}\leq \|A^q_{k}\|_{\omega,\phi_k,M}\|\rho u\|_{\omega_{(k)}}\leq C(p) \|A^q_{k}\|_{\omega,\phi_k,M}\|u\|_{0}.    
\end{equation}
By (\ref{sca laplacian}) and (\ref{040701}), we have 
\begin{equation}\label{pf 2}
\left(C_k^{n/2}(\Box^q_{(k),s})^mA^q_{(k),s}\rho u\right)(\sqrt{C_k}z)=C_k^{-m}\left(\Box^q_{k,s}\right)^m A^q_{k,s}\rho_k u_k(z).
\end{equation}where $u_k:=C_k^{n/2}u(\sqrt{C_k}z)$ and $\rho_k:=\rho(\sqrt{C_k}z)$. By changing variables, we get
\begin{equation}\label{pf 3}
\|\left(\Box^q_{(k),s}\right)^mA^q_{(k),s}\rho u(z)\|_{\omega_{(k)},B(\sqrt{C_k})}=\|C_k^{-m}\left(\Box^q_{k,s}\right)^m A^q_{k,s}\rho_k u_k\|_{\omega,B(1)}.
\end{equation}By the unitary identification (\ref{sca i3}) and the fact that $A^q_k=a^q_k(\Box^q_k)$,
\begin{align}\label{pf 4}
\|C_k^{-m}\left(\Box^q_{k,s}\right)^m &A^q_{k,s}\rho_k u_k\|_{\omega,B(1)}\leq \|C_k^{-m}\left(\Box^q_{k}\right)^m A^q_{k}\left(\rho_k e^{\phi_k} u_k\otimes s_k\right)\|_{\omega,\phi_k,M}\\
&\leq N_{m,k}\|\rho_k e^{\phi_k} u_k\otimes s_k\|_{\omega,\phi_k,M}= N_{m,k}\|\rho_ku_k\|_{\omega,B(1)}.\notag
\end{align}By changing variables again (or the identity (\ref{sca i3})) and (\ref{compatible}), we have
\begin{equation}\label{eq 0415}
\|\rho_ku_k\|_{\omega,B(1)}=\|\rho u\|_{\omega_{(k)}}\leq C(p)\|u\|_{0}. 
\end{equation}
Combining (\ref{pf 1})-(\ref{eq 0415}), we have another constant $C(U,m,p)$ such that
\begin{equation}\label{pf 5}
    \|\chi A^q_{(k),s}\rho u\|_{2m} \leq C(U,m,p) (\|A^q_k\|_{\omega,\phi_k}+N_{m,k})\|u\|_{0}.
\end{equation}

Now, our goal is to dominate the right-hand side of (\ref{pf 1}) by $\|u\|_{-2m}$. It remains to prove the following claim:

\begin{claim} There exists $C(U,m,p)>0$ which  continuously depends on $p$ such that
\begin{align}
\|\Tilde{\chi}A^q_{(k),s}\rho u\|_0 &\leq C(U,m,p)(\|A^q_k\|_{\omega,\phi_k}+N_{m,k})\|u\|_{-2m};\label{good1}\\
\|\Tilde{\chi}(\Box^q_{(k),s})^m A^{q}_{(k),s}\rho u\|_0&\leq C(U,m,p)(\|A^q_{k}\|_{\omega,\phi_k}+N_{2m,k})\|u\|_{-2m}.
\label{good2}  
\end{align}    
\end{claim}
To prove the claim, we take advantage of the duality property of Sobolev spaces. Let $v\in\Omega^{0,q}_c(\Cn)$ and denote  $v_k(z):=C_k^{n/2}v(\sqrt{C_k}z)$, $\Tilde{\chi}_k(z):=\Tilde{\chi}(\sqrt{C_k}z)$ and $\rho_k:=\rho(\sqrt{C_k}z)$.  By the unitary identifications (\ref{sca i2}), (\ref{sca i3}), and the self-adjointness of $A^q_k$, we have 
\begin{align}\label{pf 6}
    \left(\Tilde{\chi}A^q_{(k),s}\rho u|v\right)_{\omega_{(k)},U}&=\left(\Tilde{\chi}_kA^q_{k,s}\rho_k e^{\phi_k}u_k\otimes s_k|e^{\phi_k} v_k\otimes s_k\right)_{\omega_{k},\phi_k,M}\\
    &=\left(e^{\phi_k}u_k\otimes s_k|\rho_k A^q_{k}\Tilde{\chi}_ke^{\phi_k}v_k\otimes s_k\right)_{\omega,\phi_k,M}=\left(u|\rho A^q_{(k),s}\Tilde{\chi}v\right)_{\omega_{(k)},U}.\notag
\end{align}To estimate the right-hand side above, by (\ref{uni 3.5}), we see
\begin{equation}\label{good3}
 \aaa{\left(u|\rho A^q_{(k),s}\Tilde{\chi}v\right)_{\omega_{(k)}}}\lesssim \aaa{\left(u|\rho A^q_{(k),s}\Tilde{\chi}v\right)_{0}}.     
\end{equation}
Next, we use the duality of Sobolev space and write
\begin{equation}\label{pf 7}
 \aaa{\left(u|\rho A^q_{(k),s}\Tilde{\chi}v\right)_{0}}\leq \|u\|_{-2m}\|\rho A^q_{(k),s}\Tilde{\chi}v\|_{2m}\lesssim (\|A^q_k\|_{\omega,\phi_k}+N_{m,k})\|u\|_{-2m}\|v\|_{0},    
\end{equation}where the last inequality is from (\ref{pf 5}). By (\ref{pf 6})-(\ref{pf 7}) and the fact that $v$ is arbitrary, we have
\begin{equation*}\label{pf 8}
 \|\Tilde{\chi}A^q_{(k),s}\rho u\|_{0}\lesssim (\|A^q_k\|_{\omega,\phi_k}+N_{m,k})\|u\|_{-2m}.    
\end{equation*} All the estimates $\lesssim$ above continuously depend on $p$ and also depend on $U$ and $m$. This proves the inequality (\ref{good1}). For the proof of (\ref{good2}), we adopt the same way and fix a test section $v\in\Omega^{0,q}_c(\Cn)$ again. Then, we mimic the process (\ref{pf 6}) above to get the following estimate:  
\begin{equation}\label{pf 9}
    \left(\Tilde{\chi}(\Box^q_{(k),s})^m A^{q}_{(k),s}\rho u|v\right)_0\lesssim\left(u|\rho(\Box^q_{(k),s})^m A^{q}_{(k),s}\Tilde{\chi}v\right)_0
\end{equation}
 By duality of Sobolev space,  
\begin{equation}\label{pf 10}
 \mid\left(u|\rho(\Box^q_{(k),s})^m A^{q}_{(k),s}\Tilde{\chi}v\right)_0\mid\lesssim \|u\|_{-2m}\|\rho A^{q}_{(k),s}(\Box^q_{(k),s})^m\Tilde{\chi}v\|_{2m}.   
\end{equation}
By repeating the process (\ref{pf 1})-(\ref{pf 5}), we have
\begin{align}\label{pf 11}
 \|\rho A^{q}_{(k),s}(\Box^q_{(k),s})^m\Tilde{\chi}v\|_{2m}&\lesssim \|\Tilde{\rho} A^{q}_{(k),s}(\Box^q_{(k),s})^m\Tilde{\chi}v\|_0+\|\Tilde{\rho}(\Box^q_{(k),s})^m A^{q}_{(k),s}(\Box^q_{(k),s})^m\Tilde{\chi}v\|_0 \\
 &\lesssim (\|A^q_{k}\|_{\omega,\phi_k}+N_{2m,k})\|v\|_{0}\notag
\end{align}We combine (\ref{pf 9})-(\ref{pf 11}) and get
 \begin{equation}\label{pf 12}
    \|\Tilde{\chi}(\Box^q_{(k),s})^m A^{q}_{(k),s}\rho u\|_0 \lesssim (\|A^q_{k}\|_{\omega,\phi_k}+N_{2m,k})\|u\|_{-2m},
 \end{equation}since $v$ is arbitrary. This proves the inequality (\ref{good2}). By (\ref{pf 1}), (\ref{good1}) and (\ref{good2}), the theorem follows. Note that all the estimates above originate from the local behavior of $\omega$ and $\phi$ on manifold $M$. We can see that the estimate continuously depends on the point $p$ chosen at the beginning. 
\end{proof}

Next, we represent $A^{(q)}_{(k),s}(z,w)$ as the form:\[
A^{(q)}_{(k),s}(z,w)=\sideset{}{'}\sum_{|I|=|J|=q} A^{q,I,J}_{(k),s}(z,w)d\zb^I\otimes (\frac{\p}{\p \Bar{w}})^J,
\] where $A^{q,I,J}_{(k),s}(z,w)\in\mathscr{C}^{\infty}(B(\sqrt{C_k})\times B(\sqrt{C_k}))$.

\begin{thm}(The local estimate)\label{thm uniform bound}
 For any bounded domain $U \subset \Cn$, $\ell\in\N$ and strictly increasing multi-indices $I,J\in(\N_0)^{q}$, there exists $C(\ell,U,p)$ such that 
 \[
\aaa{A^{q,I,J}_{(k),s}(z,w)}_{\mathscr{C}^{\ell}(U\times U)}\leq C(\ell,U,p)\left(\|A^q_k\|_{\omega,\phi_k}+N_{m,k}+N_{2m,k}\right),
 \]for all $m\in N$ with $2m\geq \ell+n$ and $k\in\N$. Here, $C(\ell,U,p)$ continuously depends on the point $p$ chosen at the beginning. Here, $|\cdot|_{\mathscr{C}^{\ell}(U\times U)}$ is the usual $\mathscr{C}^{\ell}$-norm with domain $U\times U$.
\end{thm}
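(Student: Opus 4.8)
The plan is to derive the pointwise $\mathscr{C}^{\ell}$-bound on the component functions $A^{q,I,J}_{(k),s}(z,w)$ from the $k$-uniform Sobolev mapping property of Lemma \ref{lemma mapping property}, by the classical device that converts an operator bound $W^{-2m}_c\to W^{2m}$ into a bound on the Schwartz kernel.

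First I would localize. Fix $U$, $\ell$, and the strictly increasing multi-indices $I,J$, and choose $\chi,\rho\in\mathscr{C}^{\infty}_c(\Cn)$ with $\chi\equiv\rho\equiv 1$ on a fixed neighbourhood of $\overline{U}$. For all $k$ large enough that $\supp\chi\cup\supp\rho\subset B(\sqrt{C_k})$ (which holds for all large $k$ since $C_k\To\infty$), the Schwartz kernel of $\chi A^{q}_{(k),s}\rho$ coincides with $A^{q}_{(k),s}(z,w)$ on $U\times U$, and by Lemma \ref{lemma mapping property} the operator $\chi A^{q}_{(k),s}\rho\colon W^{-2m}_c(\Cn,\T)\to W^{2m}_c(\Cn,\T)$ is bounded with norm $\le C(\chi,\rho,m,p)\,(\|A^{q}_k\|_{\omega,\phi_k}+N_{m,k}+N_{2m,k})$, the constant continuously depending on $p$ and independent of $k$. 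Since $\|u\|_{m}^{2}=\sideset{}{'}\sum_{|I|=q}\|u_I\|_{m}^{2}$ in the chosen trivialization, extracting the $d\zb^I$-component of $\chi A^{q}_{(k),s}\rho(u_J\,d\zb^J)$ defines a scalar operator $S^{I,J}_{(k)}\colon W^{-2m}_c(\Cn)\to W^{2m}_c(\Cn)$ whose operator norm is bounded by the same quantity and whose Schwartz kernel equals $A^{q,I,J}_{(k),s}(z,w)$ on $U\times U$. Thus it suffices to bound the $\mathscr{C}^{\ell}(U\times U)$-norm of the kernel of a scalar $W^{-2m}_c\to W^{2m}$-bounded operator by its operator norm, with a constant depending only on $\ell$ and $U$.

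For this last point I would run the standard Sobolev-embedding argument. For $w_0\in\overline{U}$ and a multi-index $\beta$, the distribution $\p_w^{\beta}\delta_{w_0}$ lies in $W^{-2m}_c(\Cn)$ with norm bounded uniformly in $w_0\in\overline{U}$ as soon as $2m\ge|\beta|+n$; hence $z\mapsto\big(S^{I,J}_{(k)}(\p_w^{\beta}\delta_{w_0})\big)(z)$, which is (up to sign) $\p_w^{\beta}A^{q,I,J}_{(k),s}(z,w)\big|_{w=w_0}$, is a $W^{2m}$-function of $z$ with norm $\lesssim\|S^{I,J}_{(k)}\|$, uniformly in $w_0$. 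The Sobolev embedding $W^{2m}(\Cn)\hookrightarrow\mathscr{C}^{\ell}_{\mathrm{loc}}$, valid for $2m\ge\ell+n$, then yields $\sup_{z_0,w_0\in U}\aaa{\p_z^{\alpha}\p_w^{\beta}A^{q,I,J}_{(k),s}(z_0,w_0)}\lesssim\|S^{I,J}_{(k)}\|$ for all $|\alpha|+|\beta|\le\ell$, that is $\aaa{A^{q,I,J}_{(k),s}}_{\mathscr{C}^{\ell}(U\times U)}\le C(\ell,U)\,\|S^{I,J}_{(k)}\|$. Combining this with the bound on $\|S^{I,J}_{(k)}\|$ from the previous paragraph gives the asserted estimate with a constant $C(\ell,U,p)$ continuous in $p$ and independent of $k$.

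The hard part will be the third step: making the passage from the Sobolev mapping property to the kernel bound fully rigorous in the bundle-valued, several-variables setting --- verifying that $A^{q,I,J}_{(k),s}$ genuinely is the distributional kernel of $S^{I,J}_{(k)}$, that $\p_z^{\alpha}$ commutes with the operator applied to $\p_w^{\beta}\delta_{w_0}$, that the $W^{-2m}$-norms of $\p_w^{\beta}\delta_{w_0}$ are uniform over the compact $\overline{U}$, and that the $\omega_0$-versus-Lebesgue normalizations in all the pairings are consistent with the conventions used in Lemma \ref{lemma mapping property}. A secondary bookkeeping issue is propagating the continuous dependence on $p$ (and the $k$-independence) of all constants through Lemma \ref{lemma mapping property} and the elliptic estimate Lemma \ref{lem elliptic estimate}.
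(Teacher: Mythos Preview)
Your proposal is correct and follows essentially the same route as the paper: localize with cut-offs, apply Lemma \ref{lemma mapping property} to get the $W^{-2m}_c\to W^{2m}$ bound, then extract a pointwise $\mathscr{C}^{\ell}$-bound on the kernel via Sobolev embedding under the condition $2m\ge \ell+n$. The only cosmetic difference is that the paper replaces your direct use of $\p_w^{\beta}\delta_{w_0}\in W^{-2m}$ by an approximation of identity $f_l\to\delta_{y_0}$ and checks $\|\p^{\beta}f_l\|_{-2m}=O(1)$ uniformly in $l$ via the Fourier transform; this is exactly your uniform $W^{-2m}$-bound on $\p_w^{\beta}\delta_{w_0}$ made concrete, so the two arguments are interchangeable.
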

\begin{proof}
Denote $x$ and $y$ as the underlying real coordinates of the complex coordinates $z$ and $w$ of $\Cn\simeq \R^{2n}$. Let $\alpha,\beta\in(\N_0)^{q}$ be the multi-indices such that $|\alpha|+|\beta|\leq \ell$.
We start from the approximation of identity. For any fixed point $y_0 \in U$, we set $f_l$ as an approximation of identity with its mass concentrated at $y_0$ as $l \rightarrow \infty$. For example, let $f_l =l^nf(\sqrt{l}(y-y_0))$ where $f \in \mathscr{C}^{\infty}_c(U;[0,\infty))$ and $\int_{U}f dm=1$. By the property of approximation of identity, it is sufficient to dominate the following:
\begin{equation*}
    \sup_{x\in U, l\in \N}| \int_{U}\p^\alpha_x \p^\beta_y A^{q,I,J}_{(k),s}(x,y)f_l(y) dm(y)|.
\end{equation*} We aim to find an estimate independent of $k$ and the point $y_0\in U$ chosen above. By integration by part, we only need to consider
\begin{equation*}
    \sup_{x\in U, l\in \N}|\p^\alpha_x  \int_{U}A^{q,I,J}_{(k),s}(x,y)\p^\beta_y f_l(y)dm(y)|.
\end{equation*}
Choose $\chi \in \mathscr{C}^{\infty}_{c}(\Tilde{U})$ with $U\subset \Tilde{U}$ and $\chi\mid_{U}\equiv 1$. By Sobolev inequality, since $2m\geq |\alpha|+1$,
\begin{align*}
    \sup_{x\in \Tilde{U},l\in \N}|\chi\p^\alpha_x  \int_{U}A^{q,I,J}_{(k),s}(x,y)(\p^\beta_y f_l(y))dm(y)| & \leq \sup_{l\in\N}\|\chi A^{q}_{(k),s}\chi\left((\p^{\beta} f_l)d\zb^J\right) \|_{m}.
\end{align*}
Note that $|\hat{f}_l(\xi)|\lesssim|\int_{\R^{2n}}e^{-\ii x\cdot \xi}f_l(x)dm(x)|= O(1)$ and hence $|\widehat{(\p^{\beta}f_l)}|\lesssim |\xi|^{|\beta|}|\hat{f_l}|\lesssim |\xi|^{|\beta|}$. Since $2m\geq |\beta|+n$, we have
\[
\| (\p^{\beta} f_l)d\zb^J \|_{-2m}\lesssim \int_{\R^{2n}}(1+|\xi|^2)^{-m}|\xi|^{|\beta|}dm=O(1).
\]
After combining this fact with Lemma \ref{lemma mapping property}, we know that 
\begin{align*}
    \| \chi A^{q}_{(k),s}\chi (\p^{\beta} f_l)d\zb^J \|_{2m}&\lesssim  \left(\|A^q_{(k),s}\|_{\omega,\phi_k}+N_{m,k}+N_{2m,k}\right)\|(\p^{\beta} f_l)d\zb^J \|_{-2m}\\&=O(\|A^q_{(k),s}\|_{\omega,\phi_k}+N_{m,k}+N_{2m,k}).
\end{align*}
\end{proof}
We apply the result to the spectral kernel by setting $A^q_k:= P^q_{k,(C_k)^{-d}}$, which has functional calculus $a^q_k=\mathbbm{1}_{[0,(C_k)^{-d}]}$. The operator norm $\|P^q_{k,(C_k)^{-d}}\|_{\omega,\phi_k,M}$ is clearly less than or equal to $1$ and $N_{m,k}\leq(C_k)^{-d-1}$. For Bergman kernels, set $A^q_k:=P^q_k$ with function calculus $a^q_k=\mathbbm{1}_{\{0\}}$. Note that $\|P^q_k\|_{\omega,\phi_k,M}\leq 1$ and $N_{m,k}=0$ for all $m$ and $k$. 
\begin{defin}[Notations of spectral and Bergman kernels]Fix $d\in\N$. We let $A^q_k= P^q_{k,(C_k)^{-d}}$ and take
$P^{q,s}_{k,(C_k)^{-d}}(z,w):=A^{q,s}_k(z,w)$ which is given by \[P^{q}_{k,(C_k)^{-d}}=P^{q,s}_{k,(C_k)^{-d}}s_k(z)\otimes (s_k(w))^*.\]  Define the \textbf{scaled spectral kernel} by \[
P^{q,s}_{(k),(C_k)^{-d}}(z,w):=(C_k)^{-n}P^{q,s}_{k,(C_k)^{-d}}(\frac{z}{\sqrt{C_k}},\frac{w}{\sqrt{C_k}}).
\]
 Also, we denote by \[
P^q_{k,(C_k)^{-d},s}:=A^q_{k,s}\quad;\quad P^q_{(k),(C_k)^{-d},s}:=A^q_{(k),s}
\]the \textbf{localized spectral projection} and the \textbf{scaled localized spectral projection}, respectively. Furthermore, we define the \textbf{localized spectral kernels} and the \textbf{scaled localized spectral kernels} by \[ P^q_{k,(C_k)^{-d},s}(z,w):= A^q_{k,s}(z,w)\quad;\quad
P^q_{(k),(C_k)^{-d},s}(z,w):= A^q_{(k),s}(z,w),
\]respectively. On the other hand, we set $A^q_k=P^q_{k}$ and denote $P^{q,s}_{k}(z,w):=A^{q,s}_k(z,w)$. Define the \textbf{scaled Bergman kernel} as \[
P^{q,s}_{(k)}(z,w):=(C_k)^{-n}P^{q,s}_{k}(\frac{z}{\sqrt{C_k}},\frac{w}{\sqrt{C_k}}).
\] Also, we define the \textbf{localized Bergman projection} $P^q_{k,s}:=A^q_{k,s}$ and the \textbf{scaled localized Bergman projection} $P^q_{(k),s}:=A^q_{(k),s}$. Denote the \textbf{localized Bergman kernel} and the \textbf{scaled localized Bergman kernel} by\[
P^q_{k,s}(z,w):= A^q_{k,s}(z,w)\quad;\quad
P^q_{(k),s}(z,w):= A^q_{(k),s}(z,w),
\]respectively.\end{defin} By the identity (\ref{0407}), we have the relations 
\begin{align}
  P^{q,s}_{(k),(C_k)^{-d}}(z,w)&=e^{\phi_{(k)}(z)}P^{q}_{(k),(C_k)^{-d},s}(z,w)e^{-\phi_{(k)}(w)} ; \label{ker1}\\
  P^{q,s}_{(k)}(z,w)&=e^{\phi_{(k)}(z)}P^{q}_{(k),s}(z,w)e^{-\phi_{(k)}(w)}.  \label{ker2}
\end{align}\label{ker}
 By Theorem \ref{thm uniform bound}, we have the following corollaries:
\begin{cor}[The local uniform bounds for Bergman and spectral kernels]\label{cor666}
In the localization process introduced before, the scaled spectral kernels \[P^{q}_{(k),(C_k)^{-d},s}(z,w)=C_k^{-n}P^{q}_{k,(C_k)^{-d},s}(z/\sqrt{C_k},w/\sqrt{C_k})\] are locally uniformly bounded in the $\mathscr{C}^{\infty}$-topology on $\Cn\times\Cn$. The result also holds for the scaled localized Bergman kernel $P^q_{(k),s}(z,w)$.
\end{cor}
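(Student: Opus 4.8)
The plan is to deduce Corollary \ref{cor666} directly from the local estimate in Theorem \ref{thm uniform bound} together with the elementary bounds on operator norms and on the quantities $N_{m,k}$ for the specific choices $A^q_k = P^q_{k,(C_k)^{-d}}$ and $A^q_k = P^q_k$. First I would recall, as already noted immediately before the statement, that $\|P^q_{k,(C_k)^{-d}}\|_{\omega,\phi_k,M}\le 1$ because a spectral projection is an orthogonal projection, and that for the functional calculus $a^q_k=\mathbbm{1}_{[0,(C_k)^{-d}]}$ one has
\[
N_{m,k}=\sup_{s\in[0,\infty)}\Bigl(\tfrac{s}{C_k}\Bigr)^m a^q_k(s)=\sup_{0\le s\le (C_k)^{-d}}\Bigl(\tfrac{s}{C_k}\Bigr)^m\le (C_k)^{-(d+1)m}\le 1
\]
for all $k$ with $C_k\ge 1$; similarly for the Bergman case $a^q_k=\mathbbm{1}_{\{0\}}$ we get $N_{m,k}=0$ and $\|P^q_k\|_{\omega,\phi_k,M}\le 1$. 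Hence in both cases the factor $\|A^q_k\|_{\omega,\phi_k}+N_{m,k}+N_{2m,k}$ appearing on the right-hand side of Theorem \ref{thm uniform bound} is bounded by an absolute constant (say $3$) uniformly in $k$.

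Next, fix a compact set $K\subset\Cn$ and an integer $\ell\in\N$. Choose a bounded open domain $U\subset\Cn$ with $K\subset U$, and pick $m\in\N$ with $2m\ge \ell+n$. Theorem \ref{thm uniform bound} then gives, for each pair of strictly increasing multi-indices $I,J\in(\N_0)^q$,
\[
\bigl|P^{q,I,J}_{(k),(C_k)^{-d},s}(z,w)\bigr|_{\mathscr{C}^{\ell}(U\times U)}\le C(\ell,U,p)\bigl(\|P^q_{k,(C_k)^{-d}}\|_{\omega,\phi_k}+N_{m,k}+N_{2m,k}\bigr)\le 3\,C(\ell,U,p),
\]
for all sufficiently large $k$ (large enough that $C_k\ge 1$ and that the elliptic estimate of Lemma \ref{lem elliptic estimate} applies). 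Since the scaled localized kernel $P^q_{(k),(C_k)^{-d},s}(z,w)=\sum'_{|I|=|J|=q}P^{q,I,J}_{(k),(C_k)^{-d},s}(z,w)\,d\zb^I\otimes(\partial/\partial\wb)^J$ has only finitely many components and the number of strictly increasing multi-indices of length $q$ is finite, summing these finitely many bounds yields a uniform $\mathscr{C}^{\ell}(U\times U)$ bound on $P^q_{(k),(C_k)^{-d},s}(z,w)$ independent of $k$. As $K$ and $\ell$ were arbitrary, this is precisely local uniform boundedness in the $\mathscr{C}^{\infty}$-topology on $\Cn\times\Cn$. The argument for the scaled localized Bergman kernel $P^q_{(k),s}(z,w)$ is identical, with $N_{m,k}=0$, using $A^q_k=P^q_k$.

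Finally, to pass from the scaled localized kernel $P^q_{(k),(C_k)^{-d},s}(z,w)$ to the scaled kernel $P^q_{(k),(C_k)^{-d},s}(z,w)$ in the form stated — i.e.\ $C_k^{-n}P^q_{k,(C_k)^{-d},s}(z/\sqrt{C_k},w/\sqrt{C_k})$ — one only needs the identity $A^q_{(k),s}(z,w)=C_k^{-n}A^q_{k,s}(z/\sqrt{C_k},w/\sqrt{C_k})$ recorded right after \eqref{0407}, which is just the definitional relabelling; the relation \eqref{ker1}–\eqref{ker2} between $P^{q,s}_{(k),(C_k)^{-d}}$ and $P^q_{(k),(C_k)^{-d},s}$ involves multiplication by $e^{\pm\phi_{(k)}}$, which by the convergence $\phi_{(k)}\to\phi_0$ in $\mathscr{C}^\infty$ of \eqref{sca con} is itself locally uniformly bounded in every $\mathscr{C}^\ell$-norm, so the uniform bounds transfer between all of these kernels by the Leibniz rule. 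I do not expect any genuine obstacle here: the corollary is a bookkeeping consequence of Theorem \ref{thm uniform bound}, and the only point requiring a moment's care is the uniform-in-$k$ control of $N_{m,k}$ and of the operator norms, which is immediate for projections and for the indicator functions $\mathbbm{1}_{[0,(C_k)^{-d}]}$ and $\mathbbm{1}_{\{0\}}$.
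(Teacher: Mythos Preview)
Your proposal is correct and follows essentially the same approach as the paper: the corollary is stated immediately after the observations that $\|P^q_{k,(C_k)^{-d}}\|_{\omega,\phi_k,M}\le 1$, $N_{m,k}\le (C_k)^{-(d+1)}$, and $\|P^q_k\|_{\omega,\phi_k,M}\le 1$, $N_{m,k}=0$, and is regarded as an immediate consequence of Theorem~\ref{thm uniform bound}. Your final paragraph on transferring bounds via \eqref{ker1}--\eqref{ker2} is unnecessary, since the kernel in the corollary is by definition $A^q_{(k),s}(z,w)$ for $A^q_k=P^q_{k,(C_k)^{-d}}$, which is exactly what Theorem~\ref{thm uniform bound} controls.
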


Moreover, since the constant in Theorem \ref{thm uniform bound} continuously depends on $p$, we can insert $(z,w)=(0,0)=(p,p)$, $\ell=0$ and fix $U=B(1)$ in Theorem \ref{thm uniform bound} to get the following result:
\begin{cor}[local uniform bounds on the diagonal]\label{0410cor}Fix $d\in\N$. For any compact set $K\subset D$, there exists a constant $\Tilde{C}_{K}$ independent of $k$ such that
\begin{equation}
 \sup_{p\in K}\aaa{(C_k)^{-n}P^{q,s}_{k,(C_k)^{-d}}(p,p)}\leq \Tilde{C}_{K}.
\end{equation} Also, the result holds for Bergman kernel case. 
\end{cor}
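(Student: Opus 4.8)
The plan is to read Corollary~\ref{0410cor} off Theorem~\ref{thm uniform bound} by specializing the $\mathscr{C}^{\ell}$-estimate there to $\ell=0$ and to the center of the chart, and then using that the constant in that theorem was carried along with explicit continuous dependence on the base point.

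Concretely, fix $d\in\N$ and take $A^q_k:=P^q_{k,(C_k)^{-d}}$, so that $a^q_k=\mathbbm{1}_{[0,(C_k)^{-d}]}$, $\|A^q_k\|_{\omega,\phi_k,M}\le 1$, and $N_{m,k}=(C_k)^{-(d+1)m}\le 1$ for all $m\ge 1$ once $C_k\ge 1$; hence the factor $\|A^q_k\|_{\omega,\phi_k}+N_{m,k}+N_{2m,k}$ on the right of Theorem~\ref{thm uniform bound} is at most $3$, uniformly in such $k$. Applying that theorem with $\ell=0$, $U=B(1)$ and a fixed $m$ with $2m\ge n$ gives, for every pair of strictly increasing multi-indices $I,J$ of length $q$,
\[
\sup_{(z,w)\in B(1)\times B(1)}\aaa{A^{q,I,J}_{(k),s}(z,w)}\ \le\ 3\,C(0,B(1),p),
\]
where $C(0,B(1),p)$ depends continuously on the chosen point $p$. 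Now evaluate at $(z,w)=(0,0)\leftrightarrow(p,p)$: by the normalization of $s_k$ we have $\phi_k(0)=\phi_{(k)}(0)=0$, so the identities in (\ref{0407}) together with $A^q_{(k),s}(z,w)=C_k^{-n}A^q_{k,s}(z/\sqrt{C_k},w/\sqrt{C_k})$ collapse at the diagonal origin to
\[
\sideset{}{'}\sum_{|I|=|J|=q}A^{q,I,J}_{(k),s}(0,0)\,d\zb^I\otimes\Big(\frac{\p}{\p\wb}\Big)^J\ =\ A^q_{(k),s}(0,0)\ =\ C_k^{-n}P^{q,s}_{k,(C_k)^{-d}}(p,p),
\]
which, via $L_k\otimes (L_k)^*=\Cs$, is the intrinsic value $C_k^{-n}P^q_{k,(C_k)^{-d}}(p,p)\in\End(T^{*,(0,q)}_pM)$.

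Since $\omega(0)=\omega_0(0)$, the frame $\{d\zb^I\otimes(\p/\p\wb)^J\}$ is orthonormal at $0$, so the fibrewise norm (and the operator norm) of the displayed tensor is bounded by a constant $c_{n,q}$ times $\sup_{I,J}\aaa{A^{q,I,J}_{(k),s}(0,0)}$, hence by $3c_{n,q}\,C(0,B(1),p)$. Finally, for a compact set $K\subset D$ the continuity of $p\mapsto C(0,B(1),p)$ yields $M_K:=\sup_{p\in K}C(0,B(1),p)<\infty$, and $\Tilde{C}_K:=3c_{n,q}M_K$ gives the asserted bound for all sufficiently large $k$; the finitely many remaining $k$ cause no trouble because $p\mapsto C_k^{-n}P^q_{k,(C_k)^{-d}}(p,p)$ is smooth and $K$ is compact. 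The Bergman kernel case is word for word the same with $A^q_k:=P^q_k$, for which $\|A^q_k\|_{\omega,\phi_k}\le 1$ and $N_{m,k}\equiv 0$.

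I do not expect a genuine obstacle here: the analytic content is entirely inside Theorem~\ref{thm uniform bound}, and the single feature we must exploit is that the constant there depends \emph{continuously} on the base point, so that it stays bounded as $p$ ranges over a compact set. The only elementary points to verify are the bookkeeping bound $N_{m,k}\le 1$ and the vanishing $\phi_k(0)=\phi_{(k)}(0)=0$, which is what makes the scaled localized kernel agree at the origin with the intrinsic spectral kernel on the diagonal.
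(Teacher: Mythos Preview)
Your proposal is correct and is essentially the paper's own argument spelled out in more detail: the paper simply says to insert $(z,w)=(0,0)=(p,p)$, $\ell=0$, $U=B(1)$ into Theorem~\ref{thm uniform bound} and use that the constant there depends continuously on $p$. The additional bookkeeping you supply (the bound $N_{m,k}=(C_k)^{-(d+1)m}\le 1$, the use of $\phi_k(0)=0$ to identify the scaled localized kernel at the origin with the intrinsic diagonal value, and the handling of finitely many small $k$) is exactly what is implicit in the paper's one-line justification.
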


\newpage

\section{Asymptotics of Bergman and spectral kernels}\label{Chapter 3}
We are going to prove the main theorems (cf. Theorem \ref{main theorem}, Theorem \ref{main theorem 2}) from Section \ref{section 4.3} to Section \ref{1233333344}. We will repeat the process in \cite[Chapter 4]{me} with context-specific modifications. In Section \ref{section 3.4}, we will present an idea to show the Bergman kernel asymptotic under a stronger spectral gap condition (cf. Assumption \ref{spectral gap 2}). Before embarking on the proof of main theorems, we need to investigate the extended Laplacian on $\Cn$ and establish the spectral gap.

\subsection{Spectral gaps of the extended Laplacians on $\Cn$}\label{section 4.3}
In this section, we will extend the scaled localized Laplacian $\Box^{q}_{(k),s}$ which is defined on $B(\sqrt{C_k})$ to the whole $\Cn$. The extended localized Laplacian is identical to $\Box^{q}_{(k),s}$ in $B((C_k)^{\epsilon})$ where $\epsilon$ will be determined later in Section \ref{1233333344}. First, by (\ref{loc eq 3}), we note that
\begin{align}\label{ob}
\aaa{\phi_{(k)}-\phi_0}_{\mathscr{C}^2(B(\sqrt{C_k}))} \leq C\dfrac{|z|^3+1}{\sqrt{C_k}}\quad; \quad \ 
\aaa{\omega_{(k)}-\omega_0}_{\mathscr{C}^2(B(\sqrt{C_k}))} \leq C'\frac{|z|+1}{\sqrt{C_k}},
\end{align}where $|\cdot|_{\mathscr{C}^2(B(\sqrt{C_k}))}$ is the usual $\mathscr{C}^{2}$-norm with domain $B(\sqrt{C_k})$.

From now on, we fix a cut-off function denoted by $\chi \in \mathscr{C}^{\infty}_c(\Cn)$ such that its support is contained within the ball $B(2)$, and is identical to $1$ on the ball $B(1)$. Let us choose a number $\epsilon$ such that $0<\epsilon <1/6$ and define the extended metric data on $\Cn$ by
\begin{equation*}
    \tphi_{(k)}(z):=\chi(\dfrac{z}{(C_k)^{\epsilon}})\phi_{(k)}(z)+\left(1-\chi(\dfrac{z}{(C_k)^{\epsilon}})\right)\phi_0(z)
\end{equation*} and the extended Hermitian form by 
\begin{equation*}
    \tomega_{(k)}(z):=\chi(\dfrac{z}{(C_k)^{\epsilon}})\omega_{(k)}(z)+\left(1-\chi(\dfrac{z}{(C_k)^{\epsilon}})\right)\omega_0(z). 
\end{equation*}By (\ref{ob}) and $\epsilon<1/6$, we have the uniform convergences
\begin{equation}\label{1508}
    \aaa{\tphi_{(k)}-\phi_0}_{\mathscr{C}^2(\Cn)} \rightarrow 0 \quad \text{\rm and } \quad \aaa{\tomega_{(k)}-\omega_0}_{\mathscr{C}^2(\Cn)} \rightarrow 0.
\end{equation}Denote 
\begin{align*}
\tpb^{q}_{(k),s}:\Omega^{0,q}(\Cn) \To \Omega^{0,q+1}(\Cn)\quad;\quad
\tpb^{q,*}_{(k),s}:\Omega^{0,q}(\Cn) \To \Omega^{0,q-1}(\Cn)
\end{align*} to be the extended localized Cauchy-Riemann operator and its formal adjoint, given by
\begin{align*}
\tpb^{q}_{(k),s}=\pb^{q}+(\pb \tphi_{(k)})\wedge\cdot\quad;\quad
  \tpb^{q,*}_{(k),s}=\pb^{q,*}_{\tomega_{(k)}}+\left((\pb \tphi_{(k)})\wedge\right)^*_{\tomega_{(k)}},
\end{align*}respectively. Here, $\pb^{q,*}_{\tomega_{(k)}}$ is the formal adjoint of $\pb^q$ with respect to $(\cdot|\cdot)_{\Tilde{\omega}_{(k)}}$. Denote
\[\Box^{q\sim}_{(k),s}=\tpb_{(k),s}^*\tpb_{(k),s}+\tpb_{(k),s}\tpb_{(k),s}^*:\Dom{\Box^{q\sim}_{(k),s}}\subset L^2_{\omega_0}(\Cn,\T) \rightarrow L^2_{\omega_0}(\Cn,\T)\] as the Gaffney extension of the localized Kodaira Laplacian with respect to the Hermitian form $\tomega_{(k)}$ and the weight function $\tphi_{(k)}$. It follows immediately from the constructions that $\pb^q_{(k),s}\equiv\tpb^q_{(k),s}$, 
 $\pb^{q,*}_{(k),s}\equiv\tpb^{q,*}_{(k),s}$ and $\Box^{q\sim}_{(k),s}\equiv\Box^{q}_{(k),s}$ in $B((C_k)^{\epsilon})$. Reasonably, we call the $\Box^{q\sim}_{(k),s}$ \textbf{extended Laplacian}.\\  \par 
Suppose $\lambda_i <0$ for all $i=1,\cdots ,q_0$ ; $\lambda_i>0$ for all $i=q_0+1,\cdots ,n$. Then there exists a constant $c>0$ such that for all $z\in\Cn$, 
\begin{equation}\label{4.4 phi est}
    \dfrac{\p^2 \tphi_{(k)}}{\p z^i \p \zb^i}(z)<-c\quad \forall \, i=1,\cdots ,q_0
\quad \text{\rm and} \quad 
     \dfrac{\p^2 \tphi_{(k)}}{\p z^i \p \zb^i}(z)>c\quad \forall \, i=q_0+1,\cdots ,n.
\end{equation}The following results tell us these estimates create a uniform lower bound of the first eigenvalue of $\Box^{q\sim}_{(k),s}$. 
\begin{lem}\label{4.4 lem 1}
    For $q \neq q_0$, there is a constant $c>0$ such that for all $u \in \Dom \Box^{q\sim}_{(k),s}$,
    \begin{equation*}\left(\Box^{q\sim}_{(k),s}u\mid u\right)_{\tomega_{(k)}}=\|\tpb_{(k),s}u\|^2_{\tomega_{(k)}}+ \|\tpb^{*}_{(k),s}u\|^2_{\tomega_{(k)}}\geq c\|u\|^2_{\tomega_{(k)}}.
    \end{equation*}Therefore, $\|\Box^{q\sim}_{(k),s}u\|_{\tomega_{(k)}}\geq c\|u\|_{\tomega_{(k)}}$.
\end{lem}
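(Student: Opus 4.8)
The strategy is a standard Bochner--Kodaira--Nakano ($\pb$-)estimate adapted to the weighted, non-K\"ahler setting, made uniform in $k$ by the $\mathscr{C}^2$-convergence \eqref{1508} together with the sign conditions \eqref{4.4 phi est}. First I would reduce to $u\in\Omega^{0,q}_c(\Cn)$ by the usual Gaffney/density argument (elements of $\Dom\Box^{q\sim}_{(k),s}$ are approximated in graph norm by smooth compactly supported forms, using that $\tphi_{(k)}$ and $\tomega_{(k)}$ agree with $\phi_0,\omega_0$ outside a ball, so Andreotti--Vesentini--type density holds). Then, writing $u=\sum'_{|I|=q}u_I\,d\zb^I$ in the standard frame, I would expand $\|\tpb_{(k),s}u\|^2_{\tomega_{(k)}}+\|\tpb^*_{(k),s}u\|^2_{\tomega_{(k)}}$ via the Bochner--Kodaira--Nakano identity for the weight $\tphi_{(k)}$. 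The curvature term produced is governed by the complex Hessian $(\p^2\tphi_{(k)}/\p z^i\p\zb^j)$; the remaining terms are either nonnegative (the $\pb$ and $\pb^*$ of the coefficients) or are error terms coming from the fact that $\tomega_{(k)}$ is not flat and not K\"ahler — torsion and the difference between $\tomega_{(k)}$ and $\omega_0$.

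The key computation is the curvature lower bound. By \eqref{4.4 phi est}, after diagonalizing the Hessian of $\tphi_{(k)}$ approximately (its off-diagonal entries are $o(1)$ in $\mathscr{C}^0$ by \eqref{1508}, since the Hessian of $\phi_0$ is diagonal), the curvature operator acting on a $(0,q)$-form $d\zb^I$ contributes roughly $\sum_{i\in I}\frac{\p^2\tphi_{(k)}}{\p z^i\p\zb^i}-\sum_{i\notin I}(-\frac{\p^2\tphi_{(k)}}{\p z^i\p\zb^i})$ — more precisely, for the Kodaira Laplacian the relevant quantity is $\sum_{i\in I}\mu_i+\sum_{i\notin I}\mu_i^{-}$ type expressions; the point is that for \emph{every} index set $I$ with $|I|=q\ne q_0$, the set $I$ must either omit some $i\le q_0$ (a negative eigenvalue direction, contributing a positive amount to $\tpb^*$) or contain some $i>q_0$ (a positive eigenvalue direction, contributing a positive amount to $\tpb$). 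Hence for each $I$ one gets a strictly positive coefficient bounded below by $c>0$ uniformly, because the eigenvalue bounds in \eqref{4.4 phi est} are uniform in $z$ and $k$. Summing over $I$ yields $\geq c\|u\|^2_{\tomega_{(k)}} - (\text{error})\|u\|^2_{\tomega_{(k)}}$.

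I would control the error terms — the torsion of $\tomega_{(k)}$ and the deviation of $\tomega_{(k)}$ from $\omega_0$ in the pointwise norms and in the first-order operators $\pb^*_{\tomega_{(k)}}$ — using \eqref{1508}: all these quantities tend to $0$ uniformly on $\Cn$ as $k\to\infty$, so for $k$ large they are absorbed, say, into half of the curvature gain, leaving a bound $\geq \frac{c}{2}\|u\|^2_{\tomega_{(k)}}$ (relabel the constant). A minor point: the error estimate must be genuinely uniform on all of $\Cn$, not just on a fixed ball; this is exactly why the cutoff in the definition of $\tphi_{(k)},\tomega_{(k)}$ was arranged so that outside $B(2(C_k)^\epsilon)$ the data are \emph{exactly} $\phi_0,\omega_0$, where the identity is the clean model computation with no torsion and a sharp positive lower bound, and inside that ball \eqref{1508} gives the needed smallness. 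Finally, the last assertion $\|\Box^{q\sim}_{(k),s}u\|_{\tomega_{(k)}}\geq c\|u\|_{\tomega_{(k)}}$ follows from the first by Cauchy--Schwarz: $c\|u\|^2\leq(\Box^{q\sim}_{(k),s}u\mid u)\leq\|\Box^{q\sim}_{(k),s}u\|\,\|u\|$.

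The main obstacle I anticipate is bookkeeping the non-K\"ahler error terms in the Bochner--Kodaira formula and verifying that each is $o(1)$ \emph{uniformly over $\Cn$} rather than merely locally; the geometric heart — that $q\ne q_0$ forces a positive curvature contribution for every multi-index — is robust and essentially combinatorial once the Hessian sign conditions \eqref{4.4 phi est} are in hand.
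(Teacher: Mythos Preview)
Your plan is correct and would yield the lemma, but the paper takes a more elementary route that sidesteps the full Bochner--Kodaira--Nakano machinery and the torsion bookkeeping you anticipate as the main obstacle.

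The key simplification in the paper is to use \eqref{1508} \emph{at the very first step} to replace all $\tomega_{(k)}$-norms by $\omega_0$-norms, paying only a uniform multiplicative constant: $\|\tpb_{(k),s}u\|^2_{\tomega_{(k)}}\gtrsim\|(\pb+(\pb\tphi_{(k)})\wedge)u\|^2_{\omega_0}$ and likewise for the adjoint. This single move eliminates every non-K\"ahler/torsion error term, since one now works in the flat metric $\omega_0$ with only the weight $\tphi_{(k)}$ varying. After that reduction, for a monomial $u=f\,d\zb^I$, the paper does not expand the full curvature operator; instead it uses the combinatorial fact you identified (some $i\notin I$ with $\lambda_i<0$, or some $i\in I$ with $\lambda_i>0$) to pick \emph{one} direction $i$, drops all other components of $\tpb_{(k),s}u$ (resp.\ $\tpb^*_{(k),s}u$), and performs an explicit one-variable integration by parts on $\int|\p_{\zb^i}f+(\p_{\zb^i}\tphi_{(k)})f|^2\,dm$ to extract $-2\int|f|^2\,\p^2\tphi_{(k)}/\p z^i\p\zb^i\,dm$, which is bounded below by \eqref{4.4 phi est}.

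Your BKN approach is more systematic and handles a general $u=\sum'_I u_I\,d\zb^I$ in one stroke, whereas the paper's monomial argument is terser but leaves the passage to general $u$ implicit. Conversely, the paper's trick of passing to $\omega_0$ immediately is worth noting: it turns what you expected to be the main obstacle into a non-issue.
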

\begin{proof}
    Note that for $u\in\Omega^{0,q}_c(\Cn)$, we use (\ref{1508}) to get 
\begin{align}
 \label{4.4 lem 1 (1)}    
\|\tpb_{(k),s}u\|^2_{\tomega_{(k)}}&=\|\left(\pb+(\pb \tphi_{(k)})\wedge\right)u\|^2_{\tomega_{(k)}} \gtrsim \|\left(\pb+(\pb \tphi_{(k)})\wedge\right)u\|^2_{\omega_{0}};\\
\|\tpb^*_{(k),s}u\|^2_{\tomega_{(k)}}&=\|\left(\pb^*_{\tomega_{(k)}}+(\pb \tphi_{(k)})\wedge^*_{\tomega_{(k)}}\right)u\|^2_{\tomega_{(k)}} \gtrsim \|\left(\pb^*_{\omega_{0}}+(\pb \tphi_{(k)})\wedge^*_{\omega_0}\right)u\|^2_{\omega_{0}}.\notag
\end{align} \par 
   Let $u =fd\zb^I$ for some $f \in \mathscr{C}^{\infty}_c(\Cn)$ and $I\in (\N_{0})^q$ be a  strictly increasing multi-index. Since $q \neq q_0$, there exists $i \in \{1,\cdots , n\}$ such that at least one of the following two cases holds:
    \begin{itemize}
        \item $i \notin I$ and  $\lambda_i<0$;
        \item $i \in I$ and  $\lambda_i>0$.
    \end{itemize}If the first case holds, \begin{align}\label{4.4 lem 1 (2)}
 \| \left(\pb+(\pb \tphi_{(k)})\wedge\right)u\|^2_{\omega_0} & \geq \int_{\Cn} \mid \dfrac{\p f}{\p \zb^i}+\dfrac{\p \tphi_{(k)}}{\p \zb^i}f \mid ^2 dm  = \int_{\Cn} (\dfrac{\p f}{\p \zb^i}+\dfrac{\p \tphi_{(k)}}{\p \zb^i}f)(\dfrac{\p \Bar{f}}{\p z^i}+\dfrac{\p \Bar{\tphi}_{(k)}}{\p z^i}\Bar{f}) dm\notag  \\
 &=\int_{\Cn}|\dfrac{\p f}{\p \zb^i}|^2+\Bar{f}\dfrac{\p f}{\p \zb^i}\dfrac{\p \Bar{\tphi}_{(k)}}{\p z^i}+f\dfrac{\p \Bar{f}}{\p z^i}\dfrac{\p \tphi_{(k)}}{\p \zb^i}+|\dfrac{\p \tphi_{(k)}}{\p \zb^i}|^2|f|^2dm. 
\end{align}By integration by part, we get the equations $\int_{\Cn}|\dfrac{\p f}{\p \zb^i}|^2dm=\int_{\Cn}|\dfrac{\p f}{\p z^i}|^2dm$ and
\begin{align*}
    \int_{\Cn}\Bar{f}\dfrac{\p f}{\p \zb^i}\dfrac{\p \Bar{\tphi}_{(k)}}{\p z^i}+f\dfrac{\p \Bar{f}}{\p z^i}\dfrac{\p \tphi_{(k)}}{\p \zb^i}dm = \int_{\Cn}-2|f|^2\dfrac{\p^2 \tphi_{(k)}}{\p z^i \p \zb^i}-f\dfrac{\p \Bar{f}}{\p \zb^i}\dfrac{\p\Bar{\tphi}_{(k)}}{\p z^i}-\Bar{f}\dfrac{\p f}{\p z^i}\dfrac{{\p\tphi}_{(k)}}{\p \zb^i}dm.
\end{align*}Applying these two equations and  
$|\dfrac{\p f}{\p z^i}|^2+|\dfrac{\p \tphi_{(k)}}{\p \zb^i}|^2|f|^2-2|f||\dfrac{\p f}{\p z^i}||\dfrac{{\p\tphi}_{(k)}}{\p \zb^i}| \geq 0$ into (\ref{4.4 lem 1 (2)}),
 \begin{equation}\label{4.4 lem 1 (3)}
  \| \left(\pb+(\pb \tphi_{(k)})\wedge\right)u\|^2_{\omega_0} \geq -2\int_{\Cn}|f|^2\dfrac{\p^2 \tphi_{(k)}}{\p z^i \p \zb^i}dm  \gtrsim -\inf\left(\dfrac{\p^2 \tphi_{(k)}}{\p z^i \p \zb^i}\right)\|f\|^2_{\tomega_{(k)}} .  
 \end{equation}On the other hand, if the second case holds,    
\begin{align}
    \|\left(\pb^*_{\omega_0}+(\pb \tphi_{(k)})\wedge^*_{\omega_0}\right)u\|^2_{\omega_0} &\geq \int_{\Cn} (-\dfrac{\p f}{\p z^i}+\dfrac{\p \tphi_{(k)}}{\p z^i}f)(-\dfrac{\p \Bar{f}}{\p \zb^i}+\dfrac{\p \Bar{\tphi}_{(k)}}{\p \zb^i}\Bar{f}) dm \notag\\
 &=\int_{\Cn}|\dfrac{\p f}{\p z^i}|^2-\Bar{f}\dfrac{\p f}{\p z^i}\dfrac{\p \Bar{\tphi}_{(k)}}{\p \zb^i}-f\dfrac{\p \Bar{f}}{\p \zb^i}\dfrac{\p \tphi_{(k)}}{\p z^i}+|\dfrac{\p \tphi_{(k)}}{\p z^i}|^2|f|^2dm. \label{123444}
\end{align}By integration by part again, we have  $\int_{\Cn}|\dfrac{\p f}{\p z^i}|^2dm=\int_{\Cn}|\dfrac{\p f}{\p \zb^i}|^2dm$ and 
\begin{equation*}
 \int_{\Cn}-\Bar{f}\dfrac{\p f}{\p z^i}\dfrac{\p \Bar{\tphi}_{(k)}}{\p \zb^i}-f\dfrac{\p \Bar{f}}{\p \zb^i}\dfrac{\p \tphi_{(k)}}{\p z^i}dm = \int_{\Cn}2|f|^2\dfrac{\p^2 \tphi_{(k)}}{\p z^i \p \zb^i}+f\dfrac{\p \Bar{f}}{\p z^i}\dfrac{\p\Bar{\tphi}_{(k)}}{\p \zb^i}+\Bar{f}\dfrac{\p f}{\p \zb^i}\dfrac{{\p\tphi}_{(k)}}{\p z^i}dm.   
\end{equation*}Combining equations above and  $|\dfrac{\p f}{\p \zb^i}|^2+|\dfrac{\p \tphi_{(k)}}{\p z^i}|^2|f|^2-2|f||\dfrac{\p f}{\p \zb^i}||\dfrac{{\p\tphi}_{(k)}}{\p z^i}| \geq 0$, 
\begin{equation}\label{4.4 lem 1 (4)}
 \|\left(\pb^*_{\omega_0}+(\pb \tphi_{(k)})\wedge^*_{\omega_0}\right)u\|^2_{\omega_0} \geq 
 2 \int_{\Cn}|f|^2 \dfrac{\p^2 \tphi_{(k)}}{\p z^i \p \zb^i} dm \gtrsim \inf\left(\dfrac{\p^2 \tphi_{(k)}}{\p z^i \p \zb^i}\right)\|f\|^2_{\tomega_{(k)}} .  
\end{equation}By (\ref{4.4 lem 1 (1)}),(\ref{4.4 lem 1 (3)}) and (\ref{4.4 lem 1 (4)}), we have completed the proof for the case $u\in \Omega^{0,q}_c(\Cn)$. Next, we can prove the lemma by density argument. The density argument here is somehow technical and based on the \textbf{Friedrich's Lemma} (cf.\cite[Chapter 7, Lemma 3.3]{02}). For the details of approximation, readers may consult \cite[Lemma 5]{Hou}. 
\end{proof}

\begin{cor}\label{4.4 cor N}
 For $q \neq q_0$, the extended Laplacians $\Box^{q\sim}_{(k),s}$ is bijective and has  inverses \[N^{q}_{k}:L^2_{\tomega_{(k)}}(\Cn,\T) \rightarrow \Dom \Box^{q\sim}_{(k),s}\] which is a $k$-uniformly bounded operator.    
\end{cor}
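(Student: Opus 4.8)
The plan is to deduce Corollary~\ref{4.4 cor N} directly from Lemma~\ref{4.4 lem 1} using the standard functional-analytic dictionary: a closed, densely defined, self-adjoint operator that is bounded below by a positive constant is bijective onto the Hilbert space, with bounded inverse of norm at most $c^{-1}$. First I would record that $\Box^{q\sim}_{(k),s}$ is self-adjoint on $L^2_{\tomega_{(k)}}(\Cn,\T)$ — this is precisely the content of the Gaffney extension construction used to define it (cf. the discussion after (\ref{1154}) and the definition just before Lemma~\ref{4.4 lem 1}). Next I would invoke Lemma~\ref{4.4 lem 1}, which (for $q\neq q_0$) gives a constant $c>0$, uniform in $k$, with $(\Box^{q\sim}_{(k),s}u\mid u)_{\tomega_{(k)}}\geq c\|u\|^2_{\tomega_{(k)}}$ for all $u\in\Dom\Box^{q\sim}_{(k),s}$; in particular $\mathrm{Spec}\,\Box^{q\sim}_{(k),s}\subset[c,\infty)$, so $0$ is not in the spectrum and the inverse exists as an everywhere-defined bounded operator.

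For the argument I would proceed in three short steps. Step one: injectivity. The estimate $\|\Box^{q\sim}_{(k),s}u\|_{\tomega_{(k)}}\|u\|_{\tomega_{(k)}}\geq(\Box^{q\sim}_{(k),s}u\mid u)_{\tomega_{(k)}}\geq c\|u\|^2_{\tomega_{(k)}}$ (Cauchy–Schwarz followed by Lemma~\ref{4.4 lem 1}) yields $\|\Box^{q\sim}_{(k),s}u\|_{\tomega_{(k)}}\geq c\|u\|_{\tomega_{(k)}}$, which forces $\Ker\Box^{q\sim}_{(k),s}=\{0\}$ and, since $\Box^{q\sim}_{(k),s}$ is closed, shows its range is closed. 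Step two: surjectivity. Because $\Box^{q\sim}_{(k),s}$ is self-adjoint, $\overline{\Rang\,\Box^{q\sim}_{(k),s}}=(\Ker\Box^{q\sim}_{(k),s})^{\perp}=L^2_{\tomega_{(k)}}(\Cn,\T)$; combined with closedness of the range from step one, $\Rang\,\Box^{q\sim}_{(k),s}$ is all of $L^2_{\tomega_{(k)}}(\Cn,\T)$. Hence $\Box^{q\sim}_{(k),s}$ is a bijection onto the Hilbert space, and we define $N^q_k:=(\Box^{q\sim}_{(k),s})^{-1}$. Step three: uniform boundedness. Given $f\in L^2_{\tomega_{(k)}}(\Cn,\T)$, write $u=N^q_k f$; then $\|f\|_{\tomega_{(k)}}=\|\Box^{q\sim}_{(k),s}u\|_{\tomega_{(k)}}\geq c\|u\|_{\tomega_{(k)}}=c\|N^q_k f\|_{\tomega_{(k)}}$, so $\|N^q_k\|\leq c^{-1}$ with $c$ independent of $k$. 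Finally, since the norms $\|\cdot\|_{\tomega_{(k)}}$ are uniformly comparable to $\|\cdot\|_{\omega_0}$ by (\ref{1508}) (or the analogue of (\ref{compatible}) for $\tomega_{(k)}$), the operator $N^q_k$ is also $k$-uniformly bounded as an operator on $L^2_{\omega_0}(\Cn,\T)$, which is the statement recorded in the corollary.

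I do not expect a genuine obstacle here: the result is a formal consequence of Lemma~\ref{4.4 lem 1} together with self-adjointness, and the only points requiring a word of care are (i) confirming that the Gaffney extension is indeed self-adjoint rather than merely symmetric — which is the reason the Gaffney construction was invoked in the first place — and (ii) that the lower bound constant $c$ in Lemma~\ref{4.4 lem 1} is uniform in $k$, which is already built into that lemma's statement. The mild bookkeeping of transporting the bound between the equivalent norms $\|\cdot\|_{\tomega_{(k)}}$ and $\|\cdot\|_{\omega_0}$ is routine and uses only (\ref{1508}).
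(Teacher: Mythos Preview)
Your argument is correct and reaches the same conclusion, but the route for surjectivity differs from the paper's. You use the spectral-theoretic package: the lower bound forces $\Ker=\{0\}$ and (via closedness of the Gaffney extension) closed range, and then self-adjointness gives $\Rang=(\Ker)^{\perp}=L^2_{\tomega_{(k)}}$. The paper instead constructs the preimage by hand: for a given $v$ it defines the bounded functional $\Tl_v(\Box^{q\sim}_{(k),s}u)=(u\mid v)_{\tomega_{(k)}}$ on $\Rang\,\Box^{q\sim}_{(k),s}$ (bounded by Lemma~\ref{4.4 lem 1}), extends it by Hahn--Banach, and then uses the Riesz representation $\Tilde v$ to read off $\Box^{q\sim}_{(k),s}\Tilde v=v$; the uniform bound $\|N^q_k\|\leq c^{-1}$ falls out of the same estimate. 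Your approach is more direct once self-adjointness is on the table and makes the spectral gap $\mathrm{Spec}\subset[c,\infty)$ explicit; the paper's Hahn--Banach/Riesz argument avoids appealing to the closed-range theorem and is slightly more constructive, though it still tacitly uses self-adjointness to pass from $(u\mid v)=(\Box^{q\sim}_{(k),s}u\mid\Tilde v)$ for all $u$ to $\Box^{q\sim}_{(k),s}\Tilde v=v$. Either way the content is the same formal consequence of Lemma~\ref{4.4 lem 1}.
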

\begin{proof}
    According to Lemma \ref{4.4 lem 1}, $\Box^{q\sim}_{(k),s}$ is injective. To show the surjectivity, we choose an arbitrary $v\in L^2_{\tomega_{(k)}}(\Cn,\T)$ and consider the linear functional $\Tl_{v}$ on $\Rang \, \Box^{q\sim}_{(k),s}$ given by 
    \[
    \Tl_{v}(\Box^{q\sim}_{(k),s}\,u)=\left(u \mid v\right)_{\tomega_{(k)}} \quad \forall u\in \Dom \, \Box^{q\sim}_{(k),s}.
    \]Lemma \ref{4.4 lem 1} implies that $\|\Tl_{v}\|_{\tomega_{(k)}} \leq \dfrac{\|v\|_{\tomega_{(k)}}}{c}$ for a constant $c$ independent of $v$ and $k$. By the Hahn-Banach Theorem, the functional $\Tl_{v}$ can be extended to a bounded linear functional on $L^2_{\tomega_{(k)}}(\Cn,\T)$ with the same norm. By Riesz representation theorem, there exists a representative $\Tilde{v} \in L^2_{\tomega_{(k)}}(\Cn,\T)$ such that 
    \[
    \left(u \mid v \right)_{\tomega_{(k)}}=\Tl_{v}(\Box^{q\sim}_{(k),s}u)=\left(\Box^{q\sim}_{(k),s}u\mid\Tilde{v}\right)_{\tomega_{(k)}} \quad \forall u\in \Dom \, \Box^{q\sim}_{(k),s}.
    \]This means $\Box^{q\sim}_{(k),s}\Tilde{v}=v$ which proves the surjectivity. Define $N^{q}_{k}$ such that $N^{q}_{k}v=\Tilde{v}$.  Lemma \ref{4.4 lem 1} implies $\|N^{q}_{k}\|_{\tomega_{(k)}} \leq C$ for a constant $C$ independent of $k$.
\end{proof}
We have shown that when $q\neq q_0$, the extended Laplacian $\Box^{q\sim}_{(k),s}$ has a uniform spectral gap $\text{ spec }\Box^{q\sim}_{(k),s}\subset [c,\infty)$ for a positive constant $c$ independent of $k$. Next, in the case $q=q_0$, we should prove that the uniform spectral gap also holds in the sense that $\text{\rm spec }\Box^{q\sim}_{(k),s}\subset \{0\}\cup [c,\infty)$. Define \[
\Tilde{B}^{q}_{(k),s}:L^2_{\tomega_{(k)}}(\Cn,\T) \rightarrow \Ker \, \Box^{q\sim}_{(k),s} \subset L^2_{\tomega_{(k)}}(\Cn,\T)
\]to be the Bergman projection. The following representation of $\Tilde{B}^{q}_{(k),s}$ is standard.
\begin{thm}\label{4.4 thm B=I-}(Hodge decomposition) We have the expression
\begin{equation}\label{4.4 thm B (1)}
\Tilde{B}^{q_0}_{(k),s}=\Id-\tpb^{q_0-1}_{(k),s}N^{q_0-1}_{k}\tpb^{q_0,*}_{(k),s}-\tpb^{q_0+1,*}_{(k),s}N^{q_0+1}_{k}\tpb^{q_0}_{(k),s} \quad \textit{on}\,\, \Omega^{0,q}_c(\Cn).    
\end{equation}
Here, $N^{q}_{k}$ is the inverse of the Laplacian $\Box^{q\sim}_{k,s}$ established in Corollary \ref{4.4 cor N}.
\end{thm}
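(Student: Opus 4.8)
The plan is to verify directly that the operator $u\mapsto u-Su$ on the right side of (\ref{4.4 thm B (1)}), where
\[
Su:=\tpb^{q_0-1}_{(k),s}N^{q_0-1}_{k}\tpb^{q_0,*}_{(k),s}u+\tpb^{q_0+1,*}_{(k),s}N^{q_0+1}_{k}\tpb^{q_0}_{(k),s}u,
\]
is the orthogonal projection of $L^2_{\tomega_{(k)}}(\Cn,\T)$ onto $\Ker\Box^{q_0\sim}_{(k),s}$; since this property characterizes $\Tilde{B}^{q_0}_{(k),s}$, the identity (\ref{4.4 thm B (1)}) follows. For $u\in\Omega^{0,q_0}_c(\Cn)$ the expression $Su$ is well defined: $\tpb^{q_0,*}_{(k),s}u$ and $\tpb^{q_0}_{(k),s}u$ are compactly supported smooth sections, hence lie in $L^2_{\tomega_{(k)}}$; by Corollary \ref{4.4 cor N} (applicable since $q_0\pm1\neq q_0$) the operators $N^{q_0\mp1}_{k}$ are bounded on all of $L^2_{\tomega_{(k)}}$ and map into $\Dom\Box^{q_0-1\sim}_{(k),s}$, resp. $\Dom\Box^{q_0+1\sim}_{(k),s}$; and these domains sit inside $\Dom\tpb^{q_0-1}_{(k),s}$, resp. $\Dom\tpb^{q_0+1,*}_{(k),s}$, by the Gaffney-domain description recorded after (\ref{1154}).

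I would first show $Su\perp\Ker\Box^{q_0\sim}_{(k),s}$. If $h\in\Ker\Box^{q_0\sim}_{(k),s}$, then $0=(\Box^{q_0\sim}_{(k),s}h\mid h)_{\tomega_{(k)}}=\|\tpb^{q_0}_{(k),s}h\|^2_{\tomega_{(k)}}+\|\tpb^{q_0,*}_{(k),s}h\|^2_{\tomega_{(k)}}$, so $\tpb^{q_0}_{(k),s}h=0=\tpb^{q_0,*}_{(k),s}h$. Taking adjoints,
\[
\big(\tpb^{q_0-1}_{(k),s}N^{q_0-1}_{k}\tpb^{q_0,*}_{(k),s}u\mid h\big)_{\tomega_{(k)}}=\big(N^{q_0-1}_{k}\tpb^{q_0,*}_{(k),s}u\mid\tpb^{q_0,*}_{(k),s}h\big)_{\tomega_{(k)}}=0,
\]
and likewise $\big(\tpb^{q_0+1,*}_{(k),s}N^{q_0+1}_{k}\tpb^{q_0}_{(k),s}u\mid h\big)_{\tomega_{(k)}}=\big(N^{q_0+1}_{k}\tpb^{q_0}_{(k),s}u\mid\tpb^{q_0}_{(k),s}h\big)_{\tomega_{(k)}}=0$; hence $(Su\mid h)_{\tomega_{(k)}}=0$.

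Next I would show $u-Su\in\Ker\Box^{q_0\sim}_{(k),s}$. From $\tpb\circ\tpb=0$ one gets the intertwining identities $\Box^{q_0\sim}_{(k),s}\tpb^{q_0-1}_{(k),s}=\tpb^{q_0-1}_{(k),s}\Box^{q_0-1\sim}_{(k),s}$ and $\Box^{q_0\sim}_{(k),s}\tpb^{q_0+1,*}_{(k),s}=\tpb^{q_0+1,*}_{(k),s}\Box^{q_0+1\sim}_{(k),s}$ on the relevant domains; together with $\Box^{q_0\mp1\sim}_{(k),s}N^{q_0\mp1}_{k}=\Id$ (Corollary \ref{4.4 cor N}) this yields
\[
\Box^{q_0\sim}_{(k),s}Su=\tpb^{q_0-1}_{(k),s}\tpb^{q_0,*}_{(k),s}u+\tpb^{q_0+1,*}_{(k),s}\tpb^{q_0}_{(k),s}u=\Box^{q_0\sim}_{(k),s}u,
\]
hence $\Box^{q_0\sim}_{(k),s}(u-Su)=0$ once one checks $Su\in\Dom\Box^{q_0\sim}_{(k),s}$, which is read off from the Gaffney-domain conditions after (\ref{1154}), the inclusion $N^{q_0\mp1}_{k}(L^2_{\tomega_{(k)}})\subset\Dom\Box^{q_0\mp1\sim}_{(k),s}$, and $\tpb\circ\tpb=0$ (in the edge cases $q_0=0$ or $q_0=n$ one summand of $Su$ is vacuous). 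Combining the two steps, $u=(u-Su)+Su$ is the orthogonal decomposition of $u$ along $\Ker\Box^{q_0\sim}_{(k),s}\oplus(\Ker\Box^{q_0\sim}_{(k),s})^{\perp}$, so $\Tilde{B}^{q_0}_{(k),s}u=u-Su$, which is (\ref{4.4 thm B (1)}).

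I expect the main obstacle to be exactly this domain and regularity bookkeeping for the Gaffney (maximal-type) extensions: justifying the intertwining relations between $\Box^{q\sim}_{(k),s}$ and $\tpb^{q-1}_{(k),s}$, $\tpb^{q+1,*}_{(k),s}$ at the level of unbounded operators, and the membership $Su\in\Dom\Box^{q_0\sim}_{(k),s}$, rather than merely on $\Omega^{0,\bullet}_c(\Cn)$. As in the proof of Lemma \ref{4.4 lem 1}, this is handled through the explicit description of $\Dom\Box^{q\sim}_{(k),s}$ together with, where a limiting argument is needed, the Friedrichs-lemma density of $\Omega^{0,\bullet}_c(\Cn)$.
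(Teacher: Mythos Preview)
Your proposal is correct and follows essentially the same approach as the paper: both verify that $u-Su\in\Ker\Box^{q_0\sim}_{(k),s}$ via the intertwining relations $\Box\,\tpb=\tpb\,\Box$, $\Box\,\tpb^*=\tpb^*\,\Box$ together with $\Box N=\Id$, and that $Su\perp\Ker\Box^{q_0\sim}_{(k),s}$ by pairing against $h$ with $\tpb h=\tpb^*h=0$. The only difference is cosmetic ordering; your version is in fact a bit more careful about the domain bookkeeping than the paper, which simply carries out the formal computation on $\Omega^{0,q}_c(\Cn)$.
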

\begin{proof}Note that
    \begin{align*}
&\Box^{q_0\sim}_{(k),s}\left(\Id-\tpb^{q_0-1}_{(k),s}N^{q_0-1}_{k}\tpb^{q_0,*}_{(k),s}-\tpb^{q_0+1,*}_{(k),s}N^{q_0+1}_{k}\tpb^{q_0}_{(k),s}\right)\\&=\tpb_{(k),s}\tpb^*_{(k),s}+\tpb_{(k),s}^*\tpb_{(k),s}-\tpb_{(k),s}\tpb^*_{(k),s}\tpb_{(k),s} N^{q_0-1}_{k}\tpb^*_{(k),s}-\tpb^{*}_{(k),s}\tpb_{(k),s}\tpb_{(k),s}^{*}N^{q_0+1}\tpb_{(k),s} \\
       &=\tpb_{(k),s}\tpb^*_{(k),s}+\tpb^*_{(k),s}\tpb_{(k),s}-\tpb_{(k),s} \Box^{q_0-1\sim}_{(k),s} N^{q_0-1}_k\tpb^*_{(k),s}-\tpb^{*}_{(k),s}\Box^{q_0+1\sim}_{(k),s}N^{q_0+1}_k\tpb_{(k),s}\\&=\tpb_{(k),s}\tpb^*_{(k),s}+\tpb^*_{(k),s}\tpb_{(k),s}-(\tpb_{(k),s}\tpb^*_{(k),s}+\tpb^*_{(k),s}\tpb_{(k),s})=0.
    \end{align*}So the right-hand side of (\ref{4.4 thm B (1)}) has its image in $\Ker \Box^{q\sim}_{(k),s}$. It remains to show that $\Rang \left(\tpb^{q_0-1}_{(k),s}N^{q_0-1}_k\tpb^{q_0,*}_{(k),s}-\tpb^{q_0+1,*}_{(k),s}N^{q_0+1}_k\tpb^{q_0}_{(k),s}\right) \perp \Ker \Box^{q_0\sim}_{(k),s}$. Now, given $u\in \Omega^{0,q}_c(\Cn)$ and $v\in \Ker \Box^{q_0\sim}_{(k),s}$, since $\tpb^*_{(k),s}v=\tpb_{(k),s}v=0$,
    \begin{multline*}
       \left((\tpb_{(k),s} N^{q_0-1}_k\tpb^*_{(k),s}-\pb^{*}N^{q_0+1}_k\pb)u \mid v\right)_{\tomega_{(k)}}\\ =\left(N^{q_0-1}_k\tpb_{(k),s} u \mid \tpb^*_{(k),s} v\right)_{\tomega_{(k)}}+\left(N^{q_0+1}_k\tpb_{(k),s} u \mid \tpb_{(k),s}v\right)_{\tomega_{(k)}}=0. 
    \end{multline*}
\end{proof}
We now deduce some identities which will be frequently utilized. Compute that 
\begin{align*}
\|\tpb_{(k),s}\tpb^*_{(k),s}N^{q_0-1}_{k}\tpb^{*}_{(k),s}u\|^2_{\tomega_{(k)}}&= \left(\tpb^*_{(k),s}\tpb_{(k),s}\tpb^*_{(k),s}N^{q_0-1}_{k}\tpb^{*}_{(k),s}u \mid \tpb^*_{(k),s}N^{q_0-1}_{k,s}\tpb^{*}_{(k),s}u \right)_{\tomega_{(k)}}\\
     &=\left(\tpb^*_{(k),s}\Box^{q_0-1\sim}_{(k),s}N^{q_0-1}_{k}\tpb^{*}_{(k),s}u \mid \tpb^*_{(k),s}N^{q_0-1}_{k}\tpb^{*}_{(k),s}u \right)_{\tomega_{(k)}}\\&=\left(\tpb^*_{(k),s}\tpb^{*}_{(k),s}u \mid \tpb^*_{(k),s}N^{q_0-1}_{k}\tpb^{*}_{(k),s}u \right)_{\tomega_{(k)}}=0,    
\end{align*}for all $u\in\Omega^{0,q_0}_c(\Cn)$. Similarly, we can compute that $\|\tpb^{*}_{(k),s}\tpb_{(k),s}N^{q_0+1}_{k}\tpb_{(k),s}u\|^2_{\tomega_{(k)}}=0$ for all $u\in\Omega^{0,q_0}_c(\Cn)$. Hence, we have 
\begin{equation}\label{4.4 eq 1}
\tpb_{(k),s}\tpb^{*}_{(k),s}N^{q_0-1}_{k}\tpb^{*}_{(k),s}=0\quad;\quad \tpb^{*}_{(k),s}\tpb_{(k),s}N^{q_0+1}_{k}\tpb_{(k),s}=0 \quad \text{on }\, \Omega^{0,q_0}_c(\Cn).  
\end{equation}
Moreover, we can apply the two equations above to see that 
\begin{equation}\label{4.4 eq 2}
\tpb^*_{(k),s}\tpb_{(k),s}N^{q_0-1}_{k}\tpb^*_{(k),s}=\tpb^*_{(k),s}\quad ;\quad \tpb_{(k),s}\tpb^*_{(k),s}N^{q_0-1}_{k}\tpb_{(k),s}=\tpb_{(k),s}\quad\text{on }\, \Omega^{0,q_0}_c(\Cn).\end{equation}

\begin{thm}[uniform spectral gap]\label{4.4 thm spectral gap} 
There exists a constant $c$ independent of $k$ such that\[\|\Tilde{B}^{q_0}_{(k),s} u-u\|^2_{\tomega_{(k)}} \leq c\left(\|\tpb^{q_0,*}_{(k),s}u\|^2_{\tomega_{(k)}}+\|\tpb^{q_0}_{(k),s}u\|^2_{\tomega_{(k)}}\right)  \quad \text{\rm on }\, \Omega^{0,q_0}_c(\Cn).\]
\end{thm}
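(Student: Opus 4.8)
The plan is to read $\Tilde{B}^{q_0}_{(k),s}u-u$ directly off the Hodge decomposition of Theorem~\ref{4.4 thm B=I-} and then bound the two resulting summands by a short duality argument, the only substantial input being the $k$-uniform boundedness of the resolvents $N^{q}_{k}$ supplied by Corollary~\ref{4.4 cor N}.

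First I would fix $u\in\Omega^{0,q_0}_c(\Cn)$ and apply Theorem~\ref{4.4 thm B=I-} to get
\[
u-\Tilde{B}^{q_0}_{(k),s}u=\tpb^{q_0-1}_{(k),s}N^{q_0-1}_{k}\tpb^{q_0,*}_{(k),s}u+\tpb^{q_0+1,*}_{(k),s}N^{q_0+1}_{k}\tpb^{q_0}_{(k),s}u,
\]
with the convention that the first summand is dropped when $q_0=0$ and the second when $q_0=n$. By the triangle inequality it then suffices to dominate $\|\tpb^{q_0-1}_{(k),s}N^{q_0-1}_{k}\tpb^{q_0,*}_{(k),s}u\|^2_{\tomega_{(k)}}$ by a $k$-independent multiple of $\|\tpb^{q_0,*}_{(k),s}u\|^2_{\tomega_{(k)}}$ and, symmetrically, $\|\tpb^{q_0+1,*}_{(k),s}N^{q_0+1}_{k}\tpb^{q_0}_{(k),s}u\|^2_{\tomega_{(k)}}$ by a $k$-independent multiple of $\|\tpb^{q_0}_{(k),s}u\|^2_{\tomega_{(k)}}$.

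For the first summand I would set $v:=N^{q_0-1}_{k}\tpb^{q_0,*}_{(k),s}u$. Since $q_0-1\neq q_0$, Corollary~\ref{4.4 cor N} applies, so $v\in\Dom\Box^{q_0-1\sim}_{(k),s}$, $\Box^{q_0-1\sim}_{(k),s}v=\tpb^{q_0,*}_{(k),s}u$, and $\|v\|_{\tomega_{(k)}}\leq C\|\tpb^{q_0,*}_{(k),s}u\|_{\tomega_{(k)}}$ with $C$ independent of $k$. Integrating by parts on the Gaffney domain and using Cauchy--Schwarz,
\[
\|\tpb^{q_0-1}_{(k),s}v\|^2_{\tomega_{(k)}}\leq\left(\Box^{q_0-1\sim}_{(k),s}v\mid v\right)_{\tomega_{(k)}}=\left(\tpb^{q_0,*}_{(k),s}u\mid v\right)_{\tomega_{(k)}}\leq\|\tpb^{q_0,*}_{(k),s}u\|_{\tomega_{(k)}}\,\|v\|_{\tomega_{(k)}},
\]
and combining with the bound on $\|v\|_{\tomega_{(k)}}$ gives $\|\tpb^{q_0-1}_{(k),s}N^{q_0-1}_{k}\tpb^{q_0,*}_{(k),s}u\|^2_{\tomega_{(k)}}\leq C\|\tpb^{q_0,*}_{(k),s}u\|^2_{\tomega_{(k)}}$. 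The second summand is treated verbatim with $q_0+1$ in place of $q_0-1$ (again $q_0+1\neq q_0$, so Corollary~\ref{4.4 cor N} applies), giving a bound $C'\|\tpb^{q_0}_{(k),s}u\|^2_{\tomega_{(k)}}$; together with the triangle inequality this yields the claim with $c=2\max\{C,C'\}$.

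I do not expect a genuine obstacle, since all the analytic content sits in Corollary~\ref{4.4 cor N} (uniform invertibility of $\Box^{q\sim}_{(k),s}$ for $q\neq q_0$) and in Theorem~\ref{4.4 thm B=I-}. The two points deserving a sentence are the degenerate cases $q_0\in\{0,n\}$, where one of the two terms is simply absent, and the justification of the identity $\left(\Box^{q_0-1\sim}_{(k),s}v\mid v\right)_{\tomega_{(k)}}=\|\tpb^{q_0-1}_{(k),s}v\|^2_{\tomega_{(k)}}+\|\tpb^{q_0-1,*}_{(k),s}v\|^2_{\tomega_{(k)}}$, which is the standard integration by parts and is legitimate precisely because $v$ lies in the domain of the Gaffney extension, so no boundary term at infinity appears.
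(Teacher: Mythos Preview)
Your proposal is correct and follows essentially the same route as the paper: apply the Hodge decomposition of Theorem~\ref{4.4 thm B=I-} and bound each of the two pieces using the $k$-uniform boundedness of $N^{q}_{k}$ from Corollary~\ref{4.4 cor N}. The only cosmetic difference is that the paper integrates by parts as $\|\tpb_{(k),s}v\|^2=\left(v\mid\tpb^{*}_{(k),s}\tpb_{(k),s}v\right)_{\tomega_{(k)}}$ and then invokes the identity $\tpb^{*}_{(k),s}\tpb_{(k),s}N^{q_0-1}_k\tpb^{*}_{(k),s}=\tpb^{*}_{(k),s}$ from (\ref{4.4 eq 2}), whereas you use the quadratic-form identity $\|\tpb_{(k),s}v\|^2\leq\left(\Box^{q_0-1\sim}_{(k),s}v\mid v\right)_{\tomega_{(k)}}$ directly; both lead to the same bound.
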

\begin{proof}
  By Lemma \ref{4.4 thm B=I-}, \[\Tilde{B}^{q_0}_{(k),s}-I=-\tpb^{q_0-1}_{(k),s}N^{q_0-1}_{k}\tpb^{q_0,*}_{(k),s}-\tpb^{q_0+1,*}_{(k),s}N^{q_0+1}_{k}\tpb^{q_0}_{(k),s} \quad \text{\rm on } \, \Omega^{0,q_0}_c(\Cn).\]Given $u \in \Omega^{0,q_0}_c(\Cn)$, 
  \begin{align*}
   \|\tpb^{q_0-1}_{(k),s}N^{q_0-1}_{k}\tpb^{q_0,*}_{(k),s}u\|^2_{\tomega_{(k)}} &= \left(N^{q_0-1}_{k}\tpb^{*}_{(k),s}u\mid \tpb^{*}_{(k),s} \tpb_{(k),s}N^{q_0-1}_{k}\tpb^{*}_{(k),s}u\right)_{\tomega_{(k)}}\\&\leq \|N^{q_0-1}_{k}\tpb^{*}_{(k),s}u\|_{\tomega_{(k)}} \|\tpb^{*}_{(k),s}\tpb_{(k),s}N^{q_0-1}_{k}\tpb^{*}_{(k),s}u\|_{\tomega_{(k)}}\\&\lesssim \|\tpb^{*}_{(k),s}u\|^2_{\tomega_{(k)}}.
  \end{align*}The last inequality is from Corollary \ref{4.4 cor N} and (\ref{4.4 eq 2}). Symmetrically, we can show that
  \begin{equation*}
\|\tpb^{q_0+1,*}_{(k),s}N^{q_0+1}_k\tpb^{q_0}_{(k),s}u\|^2_{\tomega_{(k)}}\leq c\|\tpb^{q_0}_{(k),s}u\|^2_{\tomega_{(k)}}.
  \end{equation*}The two estimates above imply the theorem.
\end{proof}

\subsection{Proof of main theorems }\label{ssec}
Since all the arguments in this section are valid for both the scaled localized Bergman kernel $P^q_{(k),s}(z,w)$ and the scaled localized spectral kernel $P^q_{(k),(C_k)^{-d},s}(z,w)$. For simplicity, we represent both the scaled localized spectral and Bergman kernels as $P^q_{(k),s}(z,w)$ in this section.

Recall Corollary \ref{cor666}. By the Arzela-Ascoli Theorem, we know that every subsequence of $P^q_{(k),s}(z,w)$  has a convergent subsequence in the $\mathscr{C}^{\infty}$-topology. To show that $P^q_{(k),s}(z,w)$ is itself a uniformly convergent sequence, it suffices to show that every convergent subsequence of $P^q_{(k),s}(z,w)$ converges to the same limit. 

To prove the main theorems, we assume that $P^q_{(k),s}(z,w)$ converges locally uniformly to $P^q_{s}(z,w)$ in the $\mathscr{C}^{\infty}$-topology. Although we do not yet know the kernel section $P^q_{s}(z,w)$, we will demonstrate that it must be the Bergman kernel $P^q_{0,s}(z,w)$ in the model case on $\Cn$, which will be introduced later. If that is the case, then we have proved the main theorems by Theorem \ref{4.1 Model theorem}, and identities (\ref{ker1}), (\ref{ker2}). 

We now formulate the Bergman kernel in the model case. Recall the Laplacian $\Box^q_0$ and the localized Laplacian $\Box^q_{0,s}$ defined in (\ref{lapl mod 1}) and (\ref{lapl mod 2}), respectively. Denote by $P^q_0(z,w)$ the Bergman kernel of $\Box^q_0$ and by $P^q_{0,s}(z,w)$ the localized Bergman kernel of $\Box^q_{0,s}$. Note that \[
P^q_{0,s}(z,w)=e^{-\phi_0(z)}P^q_{0}(z,w)e^{\phi_0(w)}.
\]For $q\in\{1,\cdots,n\}$ and $\alpha\in(\N_0)^{n}$, denote
\[
z^{\alpha}_q:=(\zb^1)^{\alpha_1}\cdots(\zb^q)^{\alpha_q}(z^{q+1})^{\alpha_{q+1}}\cdots (z^{n})^{\alpha_{n}}.
\]
We now introduce a theorem that describes the Bergman kernel in the model case.

\begin{thm}\label{4.1 Model theorem}\cite[Theorem 4.2]{me}
Consider the trivial vector bundle $\T \otimes \Cs \rightarrow \Cn$ endowed with the standard Hermitian form $\omega_0$ and the weight function $\phi_0$. In the case $p\in M(q)$, 
 we assume $\lambda_i<0$ for all $i\leq q$ and $\lambda_i>0$ for all $i>q$. The localized Bergman kernel $P^{q}_{0,s}(z,w)$ of the model case is given by
\begin{equation*}
    \dfrac{|\lambda_1 \cdots \lambda_n|}{\pi^n}\,e^{2(\sum_{i=1}^{q}|\lambda_i|\zb^i w^i+\sum_{i=q+1}^{n}|\lambda_i|z^i\wb^i)-\sum_{i=1}^{n}|\lambda_i|(|z^i|^2+|w^i|^2)}(d\zb^1\wedge \cdots \wedge d\zb^q) \otimes (\dfrac{\p}{\p \wb^{1}} \wedge \cdots \wedge \dfrac{\p}{\p \wb^{q}}).
\end{equation*}Furthermore,
\[
\{\Psi_{\alpha}:=\sqrt{\dfrac{2^{|\alpha|}[\lambda]^{\alpha+1}}{\pi^n \alpha!}}z^{\alpha}_{q}e^{-\sum_{i=1}^{n}|\lambda_i||z^i|^2}d\zb^1\wedge \cdots \wedge d\zb^q\}_{\alpha \in \N_0^n} 
\]is the orthonormal basis of $\Ker \Box^{q}_{0,s}\subset L^2_{\omega_0}(\Cn,\T)$.\par However, if $p\notin M(q)$, then
\[
\Ker \Box^{q}_{0,s}=\{0\}
\,\, \text{ and hence }\, \Bsko(z,w) \equiv 0.\]
\end{thm}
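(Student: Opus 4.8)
The plan is to diagonalise $\Box^{q}_{0,s}$ completely and read off the kernel. First I would make all operators explicit. Since $\phi_0=\sum_{i=1}^{n}\lambda_i|z^i|^2$ we have $\pb\phi_0=\sum_{j=1}^{n}\lambda_jz^j\,d\zb^j$, so setting $b_j:=\p_{\zb^j}+\lambda_jz^j$ acting on coefficient functions gives $\pb^{q}_{0,s}=\sum_{j}(d\zb^j\wedge)\,b_j$. A one--line integration by parts with respect to $(\cdot|\cdot)_{\omega_0}$ produces the formal adjoint $b_j^{*}=-\p_{z^j}+\lambda_j\zb^j$, the identity $\pb^{q,*}_{0,s}=\sum_{j}b_j^{*}(d\zb^j\wedge)^{*}$, the commutation rules $[b_j,b_k]=[b_j^{*},b_k^{*}]=0$ and $[b_j,b_k^{*}]=2\lambda_j\delta_{jk}$, and the Clifford anticommutator $(d\zb^j\wedge)(d\zb^k\wedge)^{*}+(d\zb^k\wedge)^{*}(d\zb^j\wedge)=\delta_{jk}$. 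Expanding $\Box^{q}_{0,s}=\pb^{q-1}_{0,s}\pb^{q,*}_{0,s}+\pb^{q+1,*}_{0,s}\pb^{q}_{0,s}$ and using these relations yields the model Bochner--Kodaira identity
\[
\Box^{q}_{0,s}=\sum_{j=1}^{n}b_j^{*}b_j+2\sum_{j=1}^{n}\lambda_j\,(d\zb^j\wedge)(d\zb^j\wedge)^{*},
\]
which in particular preserves each component of the orthonormal frame $\{d\zb^I\}$: on $u_I\,d\zb^I$ it acts as $\bigl(\sum_{j}b_j^{*}b_j+2\sum_{j\in I}\lambda_j\bigr)u_I$.

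Next I would analyse each one--variable operator $b_j^{*}b_j$, a shifted harmonic oscillator on $L^{2}(\Cs)$. When $\lambda_j>0$ its spectrum is $\{2m\lambda_j:m\in\N_0\}$, with ground state space $\Ker b_j=\{e^{-\lambda_j|z^j|^2}h(z^j):h\text{ holomorphic}\}$; when $\lambda_j<0$ one has $b_j^{*}b_j=b_jb_j^{*}+2|\lambda_j|$, so the spectrum is $\{2(m+1)|\lambda_j|:m\in\N_0\}$, with $b_j^{*}b_j\ge 2|\lambda_j|>0$ and ground state space $\Ker b_j^{*}=\{e^{\lambda_j|z^j|^2}g(\zb^j):g\text{ holomorphic in }\zb^j\}$. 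Because the $b_j$ act on disjoint variables, $\sum_{j}b_j^{*}b_j$ is simultaneously diagonalised by products, and a short minimisation shows that any eigenvalue of $\sum_{j}b_j^{*}b_j+2\sum_{j\in I}\lambda_j$ is at least $2\sum_{j\notin I,\,\lambda_j<0}|\lambda_j|+2\sum_{j\in I,\,\lambda_j>0}|\lambda_j|\ge 0$, with the value $0$ attained exactly when $I=\{j:\lambda_j<0\}$. Since $\Box^{q}_{0,s}$ is the Gaffney extension we have $\Ker\Box^{q}_{0,s}=\Ker\pb^{q}_{0,s}\cap\Ker\pb^{q,*}_{0,s}$, and the estimate above forces $\Ker\Box^{q}_{0,s}=\{0\}$ unless $q=q_0$, i.e.\ unless $p\in M(q)$; this already gives the last assertion $\Bsko(z,w)\equiv 0$ when $p\notin M(q)$.

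Assume now $p\in M(q)$, so by hypothesis $\lambda_i<0$ for $i\le q$ and $\lambda_i>0$ for $i>q$, and the only surviving component is $I=\{1,\dots,q\}$. Solving the decoupled system $b_i^{*}u_I=0$ ($i\le q$), $b_iu_I=0$ ($i>q$) shows that $\Ker\Box^{q}_{0,s}$ consists of the forms $u_I\,d\zb^1\wedge\cdots\wedge d\zb^{q}$ with $u_I=e^{-\sum_i|\lambda_i||z^i|^2}P$, where $P$ ranges over the Bargmann--Fock space of entire functions of $(\zb^1,\dots,\zb^{q},z^{q+1},\dots,z^{n})$ that are square--integrable against the weight $e^{-2\sum_i|\lambda_i||z^i|^2}$. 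In that space the monomials $z^{\alpha}_{q}$ form an orthogonal basis, and the elementary integral $\int_{\Cs}|w|^{2m}e^{-2a|w|^2}\,dm(w)=\pi\,m!/(2a)^{m+1}$ gives $\|z^{\alpha}_{q}\,e^{-\sum_i|\lambda_i||z^i|^2}\|_{\omega_0}^{2}=\pi^{n}\alpha!/(2^{|\alpha|}[\lambda]^{\alpha+1})$, so the stated family $\{\Psi_{\alpha}\}_{\alpha\in\N_0^{n}}$ is an orthonormal basis of $\Ker\Box^{q}_{0,s}$. Finally I would assemble the Bergman kernel from $P^{q}_{0,s}(z,w)=\sum_{\alpha}\Psi_{\alpha}(z)\otimes(\Psi_{\alpha}(w))^{*}$: since $z^{\alpha}_{q}(z)\,\overline{z^{\alpha}_{q}(w)}$ is the product over $i\le q$ of $(\zb^iw^i)^{\alpha_i}$ times the product over $i>q$ of $(z^i\wb^i)^{\alpha_i}$, summing each factor by $\sum_{m\ge 0}\tfrac{(2a)^{m+1}}{\pi\,m!}t^{m}=\tfrac{2a}{\pi}e^{2at}$ and collecting the Gaussian prefactors produces exactly the closed form claimed.

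The main obstacle is twofold. The first is the constant bookkeeping in the model Bochner--Kodaira identity — keeping track of the factor $2$ in both $[b_j,b_j^{*}]=2\lambda_j$ and in the term $2\sum_{j}\lambda_j(d\zb^j\wedge)(d\zb^j\wedge)^{*}$, since these are exactly what decides which component $d\zb^I$ can carry a nonzero harmonic form. The second is the functional--analytic point that the $L^{2}$-kernel of the Gaffney extension $\Box^{q}_{0,s}$ really coincides with the smooth $L^{2}$ solutions of the overdetermined first-order system above, and that the polynomial Fock basis is complete; this requires a density and elliptic-regularity argument of the same Friedrichs type already invoked elsewhere in the paper.
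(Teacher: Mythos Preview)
The paper does not actually prove this theorem: it is quoted verbatim from the author's earlier work \cite[Theorem~4.2]{me} and used as a black box, so there is no in-paper argument to compare your proposal against. Your outline is the standard route to this result (explicit Bochner--Kodaira formula for the flat model, reduction to a product of complex harmonic oscillators, identification of the Bargmann--Fock kernel) and is essentially correct.

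One point deserves a sentence of extra care. Your lower-bound computation
\[
\inf\,\mathrm{spec}\Bigl(\sum_j b_j^{*}b_j+2\sum_{j\in I}\lambda_j\Bigr)\;\ge\;2\!\!\sum_{j\notin I,\,\lambda_j<0}\!\!|\lambda_j|\;+\;2\!\!\sum_{j\in I,\,\lambda_j>0}\!\!|\lambda_j|
\]
shows the kernel vanishes whenever this bound is strictly positive, but in the \emph{degenerate} situation (some $\lambda_j=0$) the bound can be zero without the kernel being nontrivial. You should add the one-line observation that for $\lambda_j=0$ the factor $b_j^{*}b_j=-\p_{z^j}\p_{\zb^j}$ has purely continuous spectrum $[0,\infty)$ on $L^2(\Cs)$ with no $L^2$ null-space, so any tensor-product state in the kernel would have to be $L^2$ in that variable while annihilated by $-\p_{z^j}\p_{\zb^j}$, which is impossible. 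With that remark your argument covers the full ``$p\notin M(q)$'' clause, including the degenerate case, and the rest (orthogonality of the monomials, the Gaussian integral, and the summation of the reproducing kernel) is routine. Your two flagged obstacles are genuine but both are standard: the constants are exactly as you wrote them, and the density/regularity step is the same Friedrichs-type approximation already invoked in the proof of Lemma~\ref{4.4 lem 1}.
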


We now begin by translating $P^q_{s}(z,w)$ from an unknown kernel section to an operator that acts on the Hilbert space $L^2_{\omega_0}(\Cn,\T)$. 
\begin{defin}\label{Berg def}Define the integral operator $P^q_s$ as
 \begin{equation*}
        P^q_s u(z) := \int_{\Cn}P^q_s(z,w)u(w)dm(w) \quad\text{\rm for all }\, u\in L^2_{\omega_0}(\Cn,\T).
    \end{equation*}   
\end{defin}Clearly, the integral converges by the assumption $P^q_{(k),s}(z,w)\To P^q_{s}(z,w)$.
\begin{lem}[Well-definition of the integral operator]\label{4.2 Lem B op}
    The integral operator
    \begin{equation*}
        P^q_s:L^2_{\omega_0}(\Cn,T^{*,(0,q)}\Cn) \rightarrow L^2_{\omega_0}(\Cn,T^{*,(0,q)}\Cn)
    \end{equation*}is a bounded linear map with its operator norm smaller than $1$.
\end{lem}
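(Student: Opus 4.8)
The plan is to deduce the boundedness of $P^q_s$ from the uniform operator-norm bounds on the approximating operators $P^q_{(k),s}$ established earlier, together with the local $\mathscr{C}^\infty$-convergence $P^q_{(k),s}(z,w)\To P^q_s(z,w)$. Recall from (\ref{uni 1}) that $\|P^q_{(k),s}\|_{\omega_{(k)},B(\sqrt{C_k})}\leq \|A^q_k\|_{\omega,\phi_k,M}\leq 1$ (the last inequality because $A^q_k$ is either a spectral projection or the Bergman projection, both of norm $\leq 1$), and from (\ref{uni 4}) that for every bounded domain $U$ and every $\varepsilon>0$ one has $\|P^q_{(k),s}\|_{\omega_0,U}\leq (1+\varepsilon)\|P^q_{(k),s}\|_{\omega_{(k)},U}\leq (1+\varepsilon)$ for all large $k$. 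So the localized operators are uniformly bounded by $1$ (up to $\varepsilon$) on every bounded domain with respect to the fixed flat norm $\|\cdot\|_{\omega_0}$.

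The key step is to pass this bound to the limit operator. First I would fix $u,v\in\Omega^{0,q}_c(\Cn)$ and choose a bounded domain $U$ containing $\supp u\cup\supp v$ together with a neighborhood large enough that the kernel convergence is uniform on $U\times U$. Since $P^q_{(k),s}(z,w)\To P^q_s(z,w)$ uniformly on $U\times U$ and $u$ has compact support, $P^q_{(k),s}u(z)=\int_{\Cn}P^q_{(k),s}(z,w)u(w)\,dm(w)\To P^q_s u(z)$ uniformly on $U$, hence in particular in $L^2_{\omega_0}(U,\T)$. Therefore
\begin{equation*}
\big|(P^q_s u\mid v)_{\omega_0}\big|=\lim_{k\To\infty}\big|(P^q_{(k),s}u\mid v)_{\omega_0,U}\big|\leq \limsup_{k\To\infty}\|P^q_{(k),s}\|_{\omega_0,U}\,\|u\|_{\omega_0}\,\|v\|_{\omega_0}\leq \|u\|_{\omega_0}\|v\|_{\omega_0},
\end{equation*}
where the last inequality uses (\ref{uni 4}) with $\varepsilon\To 0$. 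Taking the supremum over $v\in\Omega^{0,q}_c(\Cn)$ with $\|v\|_{\omega_0}\leq 1$ gives $\|P^q_s u\|_{\omega_0}\leq \|u\|_{\omega_0}$ for all $u\in\Omega^{0,q}_c(\Cn)$. Since $\Omega^{0,q}_c(\Cn)$ is dense in $L^2_{\omega_0}(\Cn,\T)$, the operator $P^q_s$ extends uniquely to a bounded linear map on $L^2_{\omega_0}(\Cn,\T)$ with operator norm $\leq 1$, and one checks this extension agrees with the integral formula in Definition \ref{Berg def} wherever the latter converges.

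The only genuine subtlety — the point I would be most careful about — is the interchange of limit and integration defining $(P^q_s u\mid v)_{\omega_0}$: one must make sure that evaluating the integral operator against a fixed $u\in\Omega^{0,q}_c$ really does commute with $k\To\infty$, and that the $L^2$-norm used on the left ($\omega_0$) is uniformly comparable to the norms $\omega_{(k)}$ appearing in the uniform bound (\ref{uni 1}); both are handled by (\ref{uni 3.5})--(\ref{uni 4}) once $U$ is fixed, and by the compact support of $u$ which localizes everything to $U$. Everything else is a routine density-and-duality argument, and no estimate beyond those already proved in Section \ref{Aaaac} is needed.
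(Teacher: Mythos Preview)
Your proposal is correct and follows essentially the same approach as the paper's proof: both test against compactly supported $u,v\in\Omega^{0,q}_c(\Cn)$, use the local uniform convergence $P^q_{(k),s}(z,w)\To P^q_s(z,w)$ on a bounded domain $U\supset\supp u\cup\supp v$, and invoke the uniform bounds (\ref{uni 1}) and (\ref{uni 4}) together with $\|P^q_k\|_{\omega,\phi_k}\leq 1$ to conclude $\|P^q_s u\|_{\omega_0}\leq\|u\|_{\omega_0}$, followed by a density argument. The only cosmetic difference is that you pass to the limit in $(P^q_{(k),s}u\mid v)_{\omega_0}$ directly, whereas the paper phrases it via an $\varepsilon$-inequality $|(v\mid P^q_s u)_{\omega_0}|\leq(1+\varepsilon)|(v\mid P^q_{(k),s}u)_{\omega_0}|$ for large $k$; the content is identical.
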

\begin{proof}
      Let $u,v\in\Omega^{0,q}_c(\Cn)$ and $U\subset \Cn$ be a bounded open set containing $\supp u$ and $\supp v$. Note that
    \begin{align*}
        \left(v\mid P^q_s u\right)_{\omega_0}&=\int_{U}\int_{U}\la v(z) | P^q_s(z,w)u(w) \ra_{\omega_0}2^{2n}dm(w)dm(z).
    \end{align*}Let $\varepsilon>0$. By the fact that $P^q_{(k),s}(z,w) \rightarrow P^q_s(z,w)$ local uniformly, the above integral can be dominated as
    \begin{equation*}
    \aaa{\left(v\mid P^q_s u\right)_{\omega_0}}\leq(1+\varepsilon)\aaa{\left(v\mid P^q_{(k),s} u\right)_{\omega_0}}, \end{equation*} 
    for large enough $k$. By inserting $A^q_k=P^q_k$ in (\ref{uni 1}) and applying (\ref{uni 4}) and the fact $\|P^q_k\|_{\omega,\phi_k}\leq 1$, we have
    \[
    \aaa{\left(v\mid P^q_{(k),s} u\right)_{\omega_0}}\leq\|v\|_{\omega_0}\|P^q_{(k),s}\|_{\omega_0,U}\|u\|_{\omega_0}\leq (1+\varepsilon)\|v\|_{\omega_0}\|u\|_{\omega_0},
    \]for large enough $k$.
    Since $\varepsilon>0$ and $v$ are arbitrary, we have $\|P^q_su\|_{\omega_0}\leq \|u\|_{\omega_0}$. The Lemma follows by density argument. 
\end{proof}

\begin{lem}\label{4.2 thm bd op BoxB=0}
 $P^q_s $ is a bounded linear map \[P^q_s:L^{2}_{\omega_0}(\Cn,\T)\To \Ker \Box^{q}_{0,s}.\]
\end{lem}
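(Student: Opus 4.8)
The plan is to show two things: that $P^q_s$ maps into $\Ker\pb^q_{0,s}$, and that it maps into $\Ker\pb^{q,*}_{0,s}$; together these give $P^q_s u\in\Ker\Box^q_{0,s}$ since for the model Laplacian on $\Cn$ the kernel of $\Box^q_{0,s}$ coincides with $\Ker\pb^q_{0,s}\cap\Ker\pb^{q,*}_{0,s}$ (this is the Gaffney/Hodge decomposition already used implicitly in Section~\ref{section 4.3}, and it suffices to verify the weak identities $(\,P^q_s u\mid \pb^{q-1}_{0,s}\eta\,)_{\omega_0}=0$ and $(\,P^q_s u\mid\pb^{q+1,*}_{0,s}\eta\,)_{\omega_0}=0$ for all test forms $\eta$, which is what the argument actually produces). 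The whole point is to transfer the defining property of the spectral/Bergman projection---that its range lies in $\Ker\Box^q_k$, hence is annihilated by $\pb^q_k$ and $\pb^{q,*}_k$ when paired suitably---through the scaling limit.

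First I would fix $u\in\Omega^{0,q}_c(\Cn)$ and a test form $\eta\in\Omega^{0,q+1}_c(\Cn)$, and compute $(\pb^{q}_{0,s}P^q_s u\mid\eta)_{\omega_0}=(P^q_s u\mid\pb^{q+1,*}_{0,s}\eta)_{\omega_0}$. The strategy is to replace $P^q_s$ by $P^q_{(k),s}$ up to an error that vanishes as $k\to\infty$: by local uniform $\mathscr{C}^\infty$-convergence (Corollary~\ref{cor666}) and because $u,\eta$ have compact support, $(P^q_s u\mid\pb^{q+1,*}_{0,s}\eta)_{\omega_0}$ is the limit of $(P^q_{(k),s}u\mid\pb^{q+1,*}_{0,s}\eta)_{\omega_{(k)}}$; here one also uses (\ref{uni 3.5}) to pass between $\omega_0$ and $\omega_{(k)}$ inner products on the fixed compact set containing the supports. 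Then $\pb^{q+1,*}_{0,s}\eta$ can be replaced by $\pb^{q+1,*}_{(k),s}\eta$ with an error controlled by $|\omega_{(k)}-\omega_0|_{\mathscr{C}^1}\to0$ and $|\phi_{(k)}-\phi_0|_{\mathscr{C}^1}\to0$ on the support of $\eta$, using the explicit formulas $\pb^{q+1,*}_{(k),s}=\pb^{q+1,*}_{\omega_{(k)}}+((\pb\phi_{(k)})\wedge)^*_{\omega_{(k)}}$. Now one unwinds the scaling: by (\ref{040701}), (\ref{uni 1}) and the unitary identifications (\ref{sca i2})--(\ref{sca i4}), $(P^q_{(k),s}u\mid\pb^{q+1,*}_{(k),s}\eta)_{\omega_{(k)}}$ equals, for $k$ large enough that the supports sit inside $B(\sqrt{C_k})$, the corresponding pairing on the manifold: $(P^q_k(\tilde u_k\otimes s_k)\mid \pb^{q+1,*}_k(\tilde\eta_k\otimes s_k))_{\omega,\phi_k,M}$ for the appropriately scaled sections. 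Since $P^q_k$ projects onto $\Ker\Box^q_k\subset\Ker\pb^{q,*}_k$ (respectively onto $\mathscr{E}^q_{k,\leq(C_k)^{-d}}$, which is $\pb^{q,*}_k$-invariant and on which $\Box^q_k\leq(C_k)^{-d}$), one has $(P^q_k v\mid\pb^{q+1,*}_k w)=(\pb^q_k P^q_k v\mid w)$, and $\pb^q_k P^q_k v=0$ in the Bergman case, while in the spectral case $\|\pb^q_k P^q_k v\|^2\leq(\Box^q_k P^q_k v\mid P^q_k v)\leq(C_k)^{-d}\|v\|^2\to0$ after rescaling. The symmetric computation with a test form $\eta\in\Omega^{0,q-1}_c(\Cn)$ and $\pb^{q-1}_{(k),s}$ gives $P^q_s u\in\Ker\pb^{q,*}_{0,s}$.

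The main obstacle I anticipate is bookkeeping the two distinct error sources uniformly: (i) the difference between the limit kernel operator $P^q_s$ and $P^q_{(k),s}$, handled by Arzel\`a--Ascoli/local uniform convergence and compact supports; and (ii) the difference between the model operators $\pb^{\bullet}_{0,s}$ and the scaled-extended operators $\pb^{\bullet}_{(k),s}$, handled by (\ref{sca con}) and (\ref{ob}). Both are only $\mathscr{C}^{\infty}$-convergent on compact sets, so one must be careful to fix the compact set (the union of supports of $u$ and $\eta$) \emph{before} sending $k\to\infty$, and to note that for $k$ large this compact set lies in $B((C_k)^\epsilon)$ where the extended operators agree with the genuine scaled localized ones. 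A secondary subtlety is that $\pb^q_{0,s}P^q_s u$ is a priori only defined distributionally; but since the identity $(\pb^q_{0,s}P^q_s u\mid\eta)_{\omega_0}=0$ holds for all $\eta\in\Omega^{0,q+1}_c(\Cn)$, and likewise $(\pb^{q,*}_{0,s}P^q_s u\mid\eta)_{\omega_0}=0$ for all $\eta\in\Omega^{0,q-1}_c(\Cn)$, elliptic regularity of $\Box^q_{0,s}$ (which is elliptic with smooth coefficients) together with $P^q_s u\in L^2_{\omega_0}$ from Lemma~\ref{4.2 Lem B op} upgrades $P^q_s u$ to a smooth element of $\Dom\Box^q_{0,s}$ lying in $\Ker\pb^q_{0,s}\cap\Ker\pb^{q,*}_{0,s}=\Ker\Box^q_{0,s}$; finally density of $\Omega^{0,q}_c(\Cn)$ in $L^2_{\omega_0}(\Cn,\T)$ and boundedness of $P^q_s$ (Lemma~\ref{4.2 Lem B op}) extend the conclusion from test inputs $u$ to all $u\in L^2_{\omega_0}(\Cn,\T)$.
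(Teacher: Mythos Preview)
Your proposal is correct and rests on the same core idea as the paper---pass to the limit in the identity (or near-identity) $\Box^q_{(k),s}P^q_{(k),s}u\approx 0$ using local uniform convergence of both the scaled kernels and the scaled Laplacians---but you take a longer route than necessary. The paper works directly with the Laplacian: fixing $u\in\Omega^{0,q}_c(\Cn)$ and a cutoff $\chi$, it observes
\[
\|\chi\,\Box^q_{0,s}P^q_s u\|_0\;\lesssim\;\|\chi\,\Box^q_{(k),s}P^q_{(k),s}u\|_0\;\longrightarrow\;0,
\]
using $\Box^q_{(k),s}\to\Box^q_{0,s}$ and $P^q_{(k),s}(z,w)\to P^q_s(z,w)$ in $\mathscr{C}^\infty$ on compacta. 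Since $\chi$ is arbitrary this gives $\Box^q_{0,s}P^q_s u=0$ pointwise, and density plus Lemma~\ref{4.2 Lem B op} finish the job. Your decomposition into $\pb^q_{0,s}$ and $\pb^{q,*}_{0,s}$ separately, with weak pairings against test forms and an appeal to elliptic regularity at the end, is sound but unnecessary: the Laplacian already converges as a second-order operator, and the kernel $P^q_s(z,w)$ is smooth, so $\Box^q_{0,s}P^q_s u$ is defined classically. One thing you do handle more carefully than the paper's terse proof is the spectral-projection case, where $\Box^q_{(k),s}P^q_{(k),s}u$ is not literally zero but has norm $O((C_k)^{-d-1})$ after scaling; the paper writes ``$=0$'' where ``$\to 0$'' is what is meant.
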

\begin{proof}
We may assume  $u\in\Omega^{0,q}_c(\Cn)$ by density argument and fix a cut-off function $\chi\in\Omega^{0,q}_c(\Cn)$. By the fact $\Box^q_{(k),s}\To\Box^q_{0,s}$ locally uniformly and the assumption $P^q_{(k),s}(z,w)\To P^q_{s}(z,w)$,
\[
\|\chi\Box^q_{0,s}P^q_s u\|_0\lesssim \|\chi \Box^q_{(k),s}P^q_{(k),s}u\|_{0}=0.
\]
\end{proof}
 
To complete the proof of the main theorems, by the uniqueness of Schwartz kernels, it remains to prove that \[P^q_s:L^2_{\omega_0}(\Cn,\T)\rightarrow\Ker\Box^{q}_{0,s}\] is an orthogonal projection. By Lemma
\ref{4.2 thm bd op BoxB=0}, it is left to prove the following statement (see \cite[theorem 3.1 in section 3.1]{Yos}):
\begin{statement}\label{4.2 statement 2} 
$P^q_s u=u$ for all $u \in \Ker \Box^{q}_{0,s}$.
\end{statement}
\subsection{Proof of Statement \ref{4.2 statement 2}}\label{1233333344}
 In the case $p\notin M(q)$, Theorem \ref{4.1 Model theorem} tells us that $\Ker\Box^{q}_{0,s}=\{0\}$, and therefore $P^q_s$ is a zero map by Lemma \ref{4.2 thm bd op BoxB=0}. Consequently, Statement \ref{4.2 statement 2} automatically holds. We have completed the proof of the main theorem for the vanishing case (cf. Theorem \ref{main theorem}).

 We now focus on proving the case $p\in M(q)$. First, for the spectral projection $P^q_{k,(C_k)^{-d}}$, we have the estimate:\begin{equation}\label{natural}
  \|\left(\Id-P^q_{k,(C_k)^{-d}}\right)u\|_{\omega,\phi_k,M}\leq (C_k)^{d}\left(\Box^q_k u|u\right)_{\omega,\phi_k,M} \quad \text{for all }\, u\in\Omega^{0,q}_c(M,L_k).  
 \end{equation} In this section, we assume the spectral gap condition (cf. Def.\ref{spectral gap}) in the case of Bergman kernel and continue the convention of notation in Section \ref{ssec}. We use the notation $P^q_{k}$ to represent both the Bergman projection $P^q_{k}$ and the spectral projection $P^q_{k,(C_k)^{-d}}$. Similarly, $P^q_{k,s}(z,w)$ represents both the localized Bergman kernel $P^q_{k,s}(z,w)$ and the localized spectral kernel $P^q_{k,(C_k)^{-d},s}(z,w)$. Also, $P^q_{(k),s}(z,w)$ represents both the scaled localized Bergman kernel $P^q_{(k),s}(z,w)$ and the scaled localized spectral kernel $P^q_{(k),(C_k)^{-d},s}(z,w)$. By (\ref{natural}) and spectral gap condition (cf. Def \ref{spectral gap}) for the case of Bergman kernel, we can write
\begin{equation}\label{Hohohoho}
        \|\left(\Id-P^{q}_{k}\right)u\|^2_{\omega,\phi_k,M}\lesssim  ({C_k})^{d}\left( \Box^{q}_{k}u\mid u\right)_{\omega,\phi_k,M} \quad \text{for all }\, u\in\Omega^{0,q}_c(D,L_k).
    \end{equation}The spectral and Bergman kernels share the same estimate (\ref{Hohohoho}). The remaining proof for the main theorem (cf. Theorem \ref{main theorem 2}) is valid for both the Bergman and the spectral kernels. We now embark on the proof. By rearrangement, let $\lambda_i <0$ for all $i=1,\cdots,q$ and $\lambda_i>0$ for all $i=q+1,\cdots,n$. For $\alpha\in(\N_0)^n$, denote \[z^{\alpha}_q:=(\zb^1)^{\alpha_1}\cdots(\zb^q)^{\alpha_q}(z^{q+1})^{\alpha_{q+1}}\cdots (z^{n})^{\alpha_{n}}.\] By theorem \ref{4.1 Model theorem}, to show the Statement \ref{4.2 statement 2} for $p\in M(q)$, we may assume that $u$ is of the form
\[
u=z^{\alpha}_qe^{-\sum_{i=1}^n|\lambda_i||z^i|^2}d\zb^I,
\] where $I:=(1,\cdots,q)$. We adopt the settings in Section \ref{section 4.3}. It is important to note that in the construction of $\tomega_{(k)}$ and $\tphi_{(k)}$, we impose the condition that $0<\epsilon<1/6$. Now, we require 
\[
0<\epsilon<\min\{\dfrac{1}{2n+1},\dfrac{1}{6}\}.
\] The reason is in the proof of Theorem \ref{4.5 thm 1}.\par 
We now establish the notations of cut-off functions. Recall that $\chi \in \mathscr{C}^{\infty}_c(\Cn)$ is the cut-off function fixed at the beginning of Section \ref{section 4.3}. Choose $\rho \in \mathscr{C}^{\infty}_c(\Cn)$ as another cut-off function such that $\supp \rho \subset\{z\in\Cs; 2/7<|z|<1\}$ and $\rho\equiv 1$ on $\{z\in\Cs; 3/7<|z|<6/7\}$. Construct a sequence of cut-off functions by
\begin{equation}
    \chi_k(z):=\chi(\dfrac{7z}{(C_k)^{\epsilon}})\quad;\quad\Tilde{\chi}_k(z):=\chi(\dfrac{7z}{3(C_k)^{\epsilon}})\quad;\quad\rho_k(z):=\rho(\dfrac{z}{(C_k)^{\epsilon}}).
\end{equation}Observe that \[ \supp \chi_k \subset \{z\in\Cs;\,|z|<(2/7)(C_k)^{\epsilon}\} \quad ; \quad \supp \Tilde{\chi}_k \subset \{z\in\Cs;\,|z|<(6/7)(C_k)^{\epsilon}.\}\]  The derivatives of $\Tilde{\chi}_k$ are supported in the annuli $\{z\in\Cs;\,(3/7)(C_k)^{\epsilon}<|z|<(6/7)(C_k)^{\epsilon}\}$ and the support of $\rho_k$ are in the annuli $\{z\in\Cs;\,(2/7)(C_k)^{\epsilon}<|z|<(C_k)^{\epsilon}\}$. 
 Next, we define 
\begin{equation}
u_{(k)}:=\Tilde{\chi}_k \ \Tilde{B}^{q}_{(k),s} \ \chi_k u.
\end{equation}
Our objective is to show the convergence $u_{(k)}\To u$ in $L^2_{\omega_0}(\Cn,\T)$ as $k\To\infty$.
\begin{lem}\label{4.5 thm 1 lem 1}
    \begin{equation}
        \aaa{\tpb^{q}_{(k),s}u(z)}_{\omega_0}+\aaa{\tpb^{q,*}_{(k),s}u(z)}_{\omega_0}=O(\dfrac{1}{\sqrt{C_k}}) \quad \text{for }\, |z|< (C_k)^{\epsilon}.
    \end{equation}
\end{lem}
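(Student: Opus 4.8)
The plan is to combine two facts: inside $B((C_k)^{\epsilon})$ the extended operators coincide with the ones built from the scaled data, and $u$ lies in $\Ker\Box^{q}_{0,s}$. Since $\chi\equiv 1$ on $B(1)$, for $|z|<(C_k)^{\epsilon}$ we have $\tphi_{(k)}\equiv\phi_{(k)}$ and $\tomega_{(k)}\equiv\omega_{(k)}$ on a neighbourhood of $z$, so that $\tpb^{q}_{(k),s}u(z)=\big(\pb^{q}+(\pb\phi_{(k)})\wedge\big)u(z)$ and $\tpb^{q,*}_{(k),s}u(z)=\big(\pb^{q,*}_{\omega_{(k)}}+((\pb\phi_{(k)})\wedge)^{*}_{\omega_{(k)}}\big)u(z)$; in particular no derivative-of-cutoff terms enter, which is exactly why the statement is restricted to $|z|<(C_k)^{\epsilon}$. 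By Theorem \ref{4.1 Model theorem}, $u$ is a constant multiple of the basis element $\Psi_{\alpha}$, hence $\pb^{q}_{0,s}u=0$ and $\pb^{q,*}_{0,s}u=0$, where $\pb^{q}_{0,s}=\pb^{q}+(\pb\phi_{0})\wedge$ and $\pb^{q,*}_{0,s}=\pb^{q,*}_{\omega_{0}}+((\pb\phi_{0})\wedge)^{*}_{\omega_{0}}$. Subtracting, on $B((C_k)^{\epsilon})$,
\begin{align*}
\tpb^{q}_{(k),s}u&=\big(\pb(\phi_{(k)}-\phi_{0})\big)\wedge u,\\
\tpb^{q,*}_{(k),s}u&=\big(\pb^{q,*}_{\omega_{(k)}}-\pb^{q,*}_{\omega_{0}}\big)u+\big(((\pb\phi_{(k)})\wedge)^{*}_{\omega_{(k)}}-((\pb\phi_{0})\wedge)^{*}_{\omega_{0}}\big)u,
\end{align*}
so both quantities are error terms driven by $\phi_{(k)}-\phi_{0}$ and $\omega_{(k)}-\omega_{0}$.

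Next I would bound these errors pointwise. The wedge term satisfies $|(\pb(\phi_{(k)}-\phi_{0}))\wedge u|_{\omega_{0}}(z)\lesssim|\pb(\phi_{(k)}-\phi_{0})(z)|\,|u(z)|$. For the adjoint, $\pb^{q,*}_{\omega}$ is a first-order operator whose coefficients are smooth functions of $\omega$ and its first derivatives, and $((\pb\phi)\wedge)^{*}_{\omega}$ is a zeroth-order operator with coefficient a smooth function of $\omega$ and $\pb\phi$; hence, applied to the fixed smooth section $u$, the two differences are pointwise bounded by a constant times
\[
\big(|\omega_{(k)}-\omega_{0}|+|\nabla(\omega_{(k)}-\omega_{0})|+|\pb(\phi_{(k)}-\phi_{0})|\big)(z)\cdot\big(|u(z)|+|\nabla u(z)|\big).
\]
By (\ref{ob}), on $B((C_k)^{\epsilon})\subset B(\sqrt{C_k})$ every difference of the metric data above is $O\big((|z|^{3}+1)/\sqrt{C_k}\big)$, with implied constant independent of $k$. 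Since $u=z^{\alpha}_{q}e^{-\sum_{i}|\lambda_{i}||z^{i}|^{2}}d\zb^{I}$, both $u$ and $\nabla u$ are polynomials times the Gaussian $e^{-\sum_{i}|\lambda_{i}||z^{i}|^{2}}$, so $(|z|^{3}+1)(|u(z)|+|\nabla u(z)|)$ is bounded uniformly on $\Cn$ by a constant depending only on $\alpha$ and $\{\lambda_{i}\}$. Combining, $|\tpb^{q}_{(k),s}u(z)|_{\omega_{0}}+|\tpb^{q,*}_{(k),s}u(z)|_{\omega_{0}}=O(1/\sqrt{C_k})$ uniformly for $|z|<(C_k)^{\epsilon}$.

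The main obstacle is purely bookkeeping: making precise that $\pb^{q,*}_{\omega_{(k)}}-\pb^{q,*}_{\omega_{0}}$ and $((\pb\phi_{(k)})\wedge)^{*}_{\omega_{(k)}}-((\pb\phi_{0})\wedge)^{*}_{\omega_{0}}$, applied to $u$, are controlled by the $\mathscr{C}^{1}$-differences of $\omega_{(k)},\omega_{0}$ and of $\phi_{(k)},\phi_{0}$ — this uses only the smooth dependence of the fibrewise inner product, the Hodge-star-type contractions, and the volume form on the Hermitian form $\omega$, so that (\ref{ob}) applies. The $\tpb^{q}_{(k),s}u$ term needs no such care, since only the factor $(\pb\phi_{(k)})\wedge$ carries any dependence on the data, through $\phi_{(k)}$ alone.
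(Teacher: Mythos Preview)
Your proof is correct and follows essentially the same approach as the paper: both subtract the model operators $\pb^{q}_{0,s},\pb^{q,*}_{0,s}$ (which annihilate $u$) from the extended operators, bound the resulting coefficient differences by $(|z|^{3}+1)/\sqrt{C_k}$ via (\ref{ob}), and absorb the polynomial factor using the Gaussian decay of $u$ and its derivatives. Your write-up is a bit more explicit about why the restriction $|z|<(C_k)^{\epsilon}$ eliminates cutoff terms and about the smooth dependence of the adjoint on $\omega$, but the substance is the same.
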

\begin{proof}
 Denote $u=:fd\zb^I$ where $f=z^{\alpha}_{q}e^{-\sum_{i=1}^n|\lambda_i||z^i|^2}$. Denote $a_1(z)$ and $a_2(z)$ as the absolute maximum of the coefficients of the differential operators $\tpb_{(k),s}-\pb_{0,s}$ and $\tpb^*_{(k),s}-\pb^*_{0,s}$ at a point $z\in\Cn$, respectively. By (\ref{ob}), we can see that 
    \begin{equation*}
        \aaa{a_i(z)}\lesssim \dfrac{|z|^3+1}{\sqrt{C_k}}\quad \forall\, |z|<2(C_k)^{\epsilon} \quad \text{and}\quad \aaa{a_i(z)}=0\quad\forall\,|z|>2(C_k)^{\epsilon}. \end{equation*}Because any derivatives of $u$ decay exponentially as $|z|$ goes to infinity, there is a constant $c > 0$ such that 
     \begin{equation*}
    \aaa{(\tpb_{(k),s}-\tpb_{0,s})u(z)}_{\omega_0} \lesssim \dfrac{|z|^3+1}{\sqrt{C_k}}e^{-c|z|^2} \quad \text{\rm and} \quad \aaa{(\tpb^*_{(k),s}-\tpb^*_{0,s})u(z)}_{\omega_0} \lesssim \dfrac{|z|^3+1}{\sqrt{C_k}}e^{-c|z|^2}, 
     \end{equation*} for all $z\in\Cn$. Since $|z|^3 e^{-c|z|^2}$ is a bounded function, we have completed the proof.
\end{proof}
\begin{lem}\label{4.5 thm 1}$\|u_{(k)}-u\|_{\omega_{0}} \rightarrow 0$ as $k\To\infty$.
\end{lem}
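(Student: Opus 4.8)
The plan is to deduce the statement from the uniform spectral gap for the extended model Laplacian (Theorem~\ref{4.4 thm spectral gap}), applied to the compactly supported test form $\chi_k u$. Note first that after the rearrangement made at the start of this subsection we have $q=q_0$, so $\chi_k u\in\Omega^{0,q}_c(\Cn)=\Omega^{0,q_0}_c(\Cn)$ is an admissible input for Theorem~\ref{4.4 thm spectral gap}, and that by \eqref{1508} the norms $\|\cdot\|_{\tomega_{(k)}}$ and $\|\cdot\|_{\omega_0}$ are uniformly equivalent on $\Cn$. Recall also that $u\in\Ker\Box^q_{0,s}$ by Theorem~\ref{4.1 Model theorem}, so $u$, and all its derivatives, decay faster than any exponential.

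First I would carry out the cut-off bookkeeping. Since $\chi_k\equiv 1$ on $B((C_k)^{\epsilon}/7)$ and $u$ is super-exponentially decaying, $\|\chi_k u-u\|_{\omega_0}\to 0$. Moreover $\supp\chi_k\subset B\bigl(\tfrac{2}{7}(C_k)^{\epsilon}\bigr)$ while $\Tilde{\chi}_k\equiv 1$ on $B\bigl(\tfrac{3}{7}(C_k)^{\epsilon}\bigr)$, so $\Tilde{\chi}_k\chi_k=\chi_k$. Writing
\[
u_{(k)}-u=\Tilde{\chi}_k\bigl(\Tilde{B}^{q}_{(k),s}\chi_k u-\chi_k u\bigr)+\bigl(\chi_k u-u\bigr)
\]
and using $\|\Tilde{\chi}_k\|_{L^\infty}\le 1$, the problem reduces to proving $\|\Tilde{B}^{q}_{(k),s}\chi_k u-\chi_k u\|_{\omega_0}\to 0$.

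By Theorem~\ref{4.4 thm spectral gap} and \eqref{1508},
\[
\|\Tilde{B}^{q}_{(k),s}\chi_k u-\chi_k u\|^2_{\omega_0}\lesssim\|\tpb^{q,*}_{(k),s}(\chi_k u)\|^2_{\omega_0}+\|\tpb^{q}_{(k),s}(\chi_k u)\|^2_{\omega_0}.
\]
Using $\tpb^{q}_{(k),s}=\pb^{q}+(\pb\tphi_{(k)})\wedge$, one has $\tpb^{q}_{(k),s}(\chi_k u)=(\pb\chi_k)\wedge u+\chi_k\,\tpb^{q}_{(k),s}u$, and likewise $\tpb^{q,*}_{(k),s}(\chi_k u)=\chi_k\,\tpb^{q,*}_{(k),s}u$ plus a zeroth-order term in which $\pb\chi_k$ is contracted into $u$ (with coefficients bounded uniformly via \eqref{1508}). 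The ``bulk'' terms $\chi_k\,\tpb^{q}_{(k),s}u$ and $\chi_k\,\tpb^{q,*}_{(k),s}u$ are supported in $B\bigl(\tfrac{2}{7}(C_k)^{\epsilon}\bigr)\subset B((C_k)^{\epsilon})$, where Lemma~\ref{4.5 thm 1 lem 1} gives $|\tpb^{q}_{(k),s}u|_{\omega_0}+|\tpb^{q,*}_{(k),s}u|_{\omega_0}=O(1/\sqrt{C_k})$; since $\mathrm{vol}(\supp\chi_k)=O\bigl((C_k)^{2n\epsilon}\bigr)$, their $L^2$-norms are $O\bigl((C_k)^{n\epsilon-1/2}\bigr)$, which tends to $0$ by the standing choice $\epsilon<\tfrac{1}{2n+1}$ (this is where that restriction is needed). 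The remaining terms carry $\pb\chi_k$, supported in $\{\tfrac{1}{7}(C_k)^{\epsilon}<|z|<\tfrac{2}{7}(C_k)^{\epsilon}\}$ with $|\pb\chi_k|\lesssim (C_k)^{-\epsilon}$, where $|u|\lesssim|z|^{|\alpha|}e^{-c|z|^2}$ is super-exponentially small; hence these contributions go to $0$ faster than any power of $C_k$. Adding the four contributions gives $\|\Tilde{B}^{q}_{(k),s}\chi_k u-\chi_k u\|_{\omega_0}\to 0$, and the lemma follows.

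The main obstacle is exactly the estimate of $\tpb^{q}_{(k),s}(\chi_k u)$ and $\tpb^{q,*}_{(k),s}(\chi_k u)$: one must use Lemma~\ref{4.5 thm 1 lem 1} — which in turn exploits that $u\in\Ker\Box^q_{0,s}$, so $\tpb^{q}_{(k),s}u$ and $\tpb^{q,*}_{(k),s}u$ measure only the difference between the scaled operators and the model operators on $B((C_k)^{\epsilon})$ — to control the part where $\chi_k\equiv 1$, while the Gaussian decay of $u$ handles the commutator term living at scale $|z|\sim (C_k)^{\epsilon}$; balancing the $(C_k)^{-1/2}$ gain against the volume $(C_k)^{2n\epsilon}$ is what forces the quantitative restriction on $\epsilon$. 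The rest — the cut-off identities and the uniform equivalence of $\|\cdot\|_{\tomega_{(k)}}$ and $\|\cdot\|_{\omega_0}$ — is routine.
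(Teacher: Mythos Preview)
Your proof is correct and follows essentially the same approach as the paper: reduce to bounding $\|\Tilde{B}^{q}_{(k),s}\chi_k u-\chi_k u\|$, apply the uniform spectral gap (Theorem~\ref{4.4 thm spectral gap}), expand $\tpb_{(k),s}(\chi_k u)$ and $\tpb^{*}_{(k),s}(\chi_k u)$ via the Leibniz rule, use Lemma~\ref{4.5 thm 1 lem 1} for the bulk term and the Gaussian decay of $u$ for the commutator term, and balance volume against the $C_k^{-1/2}$ gain. Your decomposition via the observation $\Tilde{\chi}_k\chi_k=\chi_k$ is slightly cleaner than the paper's (which routes through $\tchi_k u$), but the substance is identical.
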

\begin{proof}
     Note that
     $
     \|u_{(k)}-u\|_{\omega_{0}} \lesssim \|u_{(k)}-\tchi_k u\|_{\tomega_{(k)}}+\|\tchi_{k}u-u\|_{\tomega_{(k)}}
     $. The second term tends to zero by dominated convergence theorem. For the first term,
     \begin{align*}
     \|u_{(k)}-\tchi_ku\|_{\tomega_{(k)}}= \|\tchi_k (\Tilde{B}^{q}_{(k),s} \chi_k u-u)\|_{\tomega_{(k)}}&\leq \|\Tilde{B}^{q}_{(k),s} \chi_k u-u\|_{\tomega_{(k)}}\\ &\leq \|\Tilde{B}^{q}_{(k),s} \chi_k u-\chi_k u\|_{\tomega_{(k)}}+\|\chi_k u-u\|_{\tomega_{(k)}}.    \end{align*}
      Since the second term on the right-hand side tends to zero, we only need to estimate $\|\Tilde{B}^{q}_{(k),s}\chi_k u-\chi_k u\|_{\tomega_{(k)}}$. By Theorem \ref{4.4 thm spectral gap},
     \[
     \|\Tilde{B}^{q}_{(k),s} \chi_k u-\chi_k u\|^2_{\tomega_{(k)}} \lesssim \|\tpb^*_{(k),s}\chi_ku\|^2_{\tomega_{(k)}}+\|\tpb_{(k),s}\chi_ku\|^2_{\tomega_{(k)}}.
     \]It remains to claim $\|\tpb^*_{(k),s}\chi_ku\|^2_{\tomega_{(k)}} \To 0$ and $\|\tpb_{(k),s}\chi_ku\|^2_{\tomega_{(k)}} \To 0$. For $\|\tpb_{(k),s}\chi_ku\|^2_{\tomega_{(k)}}$, we compute that $\tpb_{(k),s}\chi_ku=(\pb \chi_k)\wedge u+\chi_k \tpb_{(k),s}u$ and then 
\begin{align*}\|\tpb_{(k),s}\chi_ku\|^2_{\tomega_{(k)}} &\leq\int_{\{|z|<C_k^{\epsilon}/7\}}|\tpb_{k,s}u|_{\tomega_{(k)}}^2dV_{\tomega_{(k)}}+\int_{\{C_k^{\epsilon}/7<|z|<2C_k^{\epsilon}/7\}}|(\pb \chi_k)\wedge u+\chi_k \tpb_{k,s}u|^2_{\tomega_{(k)}}dV_{\tomega_{(k)}} \\
     &\lesssim
    \int_{\{|z|<2C_k^{\epsilon}/7\}}|\tpb_{(k),s}u|_{\omega_{0}}^2dm+\int_{\{C_k^{\epsilon}/7<|z|<2C_k^{\epsilon}/7\}}|u|^2_{\omega_{0}}dm.
     \end{align*}Clearly, the second term $\int_{\{C_k^{\epsilon}/7<|z|<2C_k^{\epsilon}/7\}}|u|^2_{\omega_{0}}dm$ tends to zero. By Lemma \ref{4.5 thm 1 lem 1}  and the setting $\epsilon<1/(2n)$, the first term can be dominated by
   \[
   \int_{\{|z|<2C_k^{\epsilon}/7\}}\aaa{\tpb_{(k),s}u}_{\omega_{0}}^2dm\lesssim \dfrac{(C_k)^{2n\epsilon}}{C_k}\To 0.
   \] We have proven $\|\tpb_{(k),s}\chi_ku\|^2_{\tomega_{(k)}} \To 0$. 
   We can show $\|\tpb^*_{(k),s}\chi_ku\|_{\tomega_{(k)}}\To 0$ in same way. 
\end{proof}
In the next step, we will display $P^q_{(k),s} u_{(k)} -u_{(k)}\To 0$ in $L^2_{\tomega_{(k)}}(\Cn,\T)$. First, we need to verify the following Lemma:

\begin{lem}\label{4.5 thm2 lem1}
Consider the functional
$\rho_k \Tilde{B}^{q}_{(k),s} \chi_k: L^2_{\tomega_{(k)}}(\Cn,\T) \rightarrow L^2_{\tomega_{(k)}}(\Cn,\T)$. For any $d\in \N$, the operator norms have the asymptotic:
\[
\|\rho_k \Tilde{B}^{q}_{(k),s} \chi_k\|_{\tomega_{(k)}}=O((C_k)^{-d})
\]
\end{lem}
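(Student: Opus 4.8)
The plan is to exploit the exponential decay of elements of $\Ker\Box^q_{0,s}$ together with the uniform spectral gap from Theorem \ref{4.4 thm spectral gap} to show that $\Tilde B^q_{(k),s}\chi_k u$ differs from the compactly supported $\chi_k u$ by something whose mass, after truncation by $\rho_k$ to the annulus $\{(2/7)(C_k)^\epsilon<|z|<(C_k)^\epsilon\}$, decays faster than any power of $C_k$. The point is that $\chi_k u$ is supported in $\{|z|<(2/7)(C_k)^\epsilon\}$, so $\rho_k\chi_k u=0$, and hence $\rho_k\Tilde B^q_{(k),s}\chi_k u=\rho_k(\Tilde B^q_{(k),s}-\Id)\chi_k u$; the support of $\rho_k$ keeps us a definite distance $\gtrsim (C_k)^\epsilon$ away from the support of $\chi_k u$. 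So the statement is really a statement about off-diagonal decay of the Bergman kernel $\Tilde B^q_{(k),s}(z,w)$ of the extended model Laplacian, uniformly in $k$.

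First I would write $v_{(k)}:=(\Tilde B^q_{(k),s}-\Id)\chi_k u$ and record, via Theorem \ref{4.4 thm spectral gap} and the computation already done in the proof of Lemma \ref{4.5 thm 1}, that $\|v_{(k)}\|_{\tomega_{(k)}}^2\lesssim \|\tpb_{(k),s}\chi_k u\|^2_{\tomega_{(k)}}+\|\tpb^{*}_{(k),s}\chi_k u\|^2_{\tomega_{(k)}}=O((C_k)^{2n\epsilon-1})$, which is already $O((C_k)^{-\delta})$ for some $\delta>0$ by the choice $\epsilon<1/(2n+1)$. But to get decay faster than \emph{every} power, a single application of the spectral gap is not enough; I would instead iterate. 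The key is that $v_{(k)}=\Tilde B^q_{(k),s}\chi_k u-\chi_k u$ satisfies $\tpb_{(k),s}v_{(k)}=-\tpb_{(k),s}\chi_k u$ and $\tpb^{*}_{(k),s}v_{(k)}=-\tpb^{*}_{(k),s}\chi_k u$, both supported in the annulus $\{(1/7)(C_k)^\epsilon<|z|<(2/7)(C_k)^\epsilon\}$ where $\chi_k$ transitions, and moreover $v_{(k)}\perp\Ker\Box^{q\sim}_{(k),s}$ is orthogonal to the model kernel. Cutting off $v_{(k)}$ against a sequence of nested cut-off functions supported in successively larger balls and applying the spectral-gap estimate together with the weighted interior estimate (the decay $|\tpb_{(k),s}u|_{\omega_0}\lesssim (|z|^3+1)C_k^{-1/2}e^{-c|z|^2}$ of Lemma \ref{4.5 thm 1 lem 1}, and the fact that $\Tilde\phi_{(k)}$ is uniformly strictly plurisubharmonic in the relevant directions by (\ref{4.4 phi est})) gives a recursive gain: on the region $\{|z|>r\}$ one bounds $\|v_{(k)}\|$ in terms of $\|v_{(k)}\|$ on $\{|z|>r-1\}$ times a factor that is small because the source terms $\tpb_{(k),s}\chi_k u,\tpb^{*}_{(k),s}\chi_k u$ live near $|z|\sim(C_k)^\epsilon$ and decay like $e^{-c(C_k)^{2\epsilon}}$ there. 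Propagating through $O((C_k)^\epsilon)$ unit steps converts this into a bound $\|\rho_k v_{(k)}\|_{\tomega_{(k)}}\lesssim e^{-c'(C_k)^{2\epsilon}}\|u\|_{\tomega_{(k)}}$, which beats any power $(C_k)^{-d}$; finally (\ref{uni 4}) transfers the bound from $\|\cdot\|_{\tomega_{(k)}}$ to $\|\cdot\|_{\omega_0}$ and likewise bounds the operator norm by testing against arbitrary $u$.

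Alternatively — and this may be cleaner to write up — I would use the Agmon-type weighted estimate directly: multiply $v_{(k)}$ by $e^{\psi}$ with $\psi(z)=c\min(|z|,(C_k)^\epsilon)$ and run the Bochner--Kodaira--type identity underlying Lemma \ref{4.4 lem 1} with this weight absorbed into $\Tilde\phi_{(k)}$; the uniform lower bound (\ref{4.4 phi est}) on the complex Hessian survives a small perturbation by $\p\pb\psi$, yielding $\|e^{\psi}v_{(k)}\|_{\tomega_{(k)}}\lesssim \|e^{\psi}\tpb_{(k),s}\chi_k u\|_{\tomega_{(k)}}+\|e^{\psi}\tpb^{*}_{(k),s}\chi_k u\|_{\tomega_{(k)}}$, and since the right side has support where $e^{\psi}\lesssim e^{c(C_k)^\epsilon}$ while on $\supp\rho_k$ one has $e^{\psi}\gtrsim e^{c(C_k)^\epsilon}$ multiplied further by the Gaussian decay $e^{-c|z|^2}$ of $u$ itself, the quotient is $O(e^{-c'(C_k)^{2\epsilon}})$. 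I expect the main obstacle to be making the weighted estimate uniform in $k$: one must check that the perturbation $\p\pb\psi$ of the weight does not destroy the $k$-independent lower bound in (\ref{4.4 phi est}) (which forces $c$ in the weight to be chosen small relative to the gap constant) and that the cut-off errors coming from $\psi$ not being smooth at $|z|=(C_k)^\epsilon$ are harmless — both are routine once set up, but they are where the argument could go wrong if one is careless about constants depending on $k$ through $\epsilon$.
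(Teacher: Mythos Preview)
Your proposal has a genuine gap: you repeatedly invoke properties of a \emph{specific} $u$ (the Gaussian $z^\alpha_q e^{-\sum|\lambda_i||z^i|^2}d\zb^I$) in what is supposed to be an \emph{operator norm} estimate valid for arbitrary $u\in L^2_{\tomega_{(k)}}(\Cn,\T)$. For a general $u$ one has $\tpb_{(k),s}\chi_k u = (\pb\chi_k)\wedge u + \chi_k\tpb_{(k),s}u$, and the second term is supported on all of $\supp\chi_k$, not just the transition annulus; so your claim that the source terms are ``both supported in the annulus where $\chi_k$ transitions'' is simply false. Likewise the appeals to Lemma~\ref{4.5 thm 1 lem 1} and to ``the Gaussian decay $e^{-c|z|^2}$ of $u$ itself'' are illegitimate here --- that lemma concerns the fixed model element, not a generic $L^2$ section. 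Your Agmon alternative inherits the same confusion, and has a further structural problem: $\supp\chi_k\subset\{|z|<(2/7)(C_k)^\epsilon\}$ and $\supp\rho_k\subset\{(2/7)(C_k)^\epsilon<|z|<(C_k)^\epsilon\}$ are disjoint but \emph{touch}, so there is no spatial gap for the weight $e^\psi$ to exploit; and perturbing $\tphi_{(k)}$ by $\psi$ changes $\Ker\Box^{q\sim}_{(k),s}$, so the orthogonality $v_{(k)}\perp\Ker\Box^{q\sim}_{(k),s}$ does not transfer to the weighted problem.

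The paper's argument is quite different and avoids all of this. It starts from the Hodge decomposition (Theorem~\ref{4.4 thm B=I-}),
\[
\Tilde B^{q}_{(k),s}=\Id-\tpb^*_{(k),s}N^{q+1}_k\tpb_{(k),s}-\tpb_{(k),s}N^{q-1}_k\tpb^*_{(k),s},
\]
so that $\rho_k\Tilde B^{q}_{(k),s}\chi_k u$ reduces to the two Green-operator terms (the $\Id$ term vanishes since $\rho_k\chi_k=0$). It then runs a commutator iteration on, say, $\|\rho_k\tpb^*_{(k),s}N^{q+1}_k\tpb_{(k),s}\chi_k u\|^2_{\tomega_{(k)}}$: rewrite as an inner product, move $\tpb_{(k),s}$ across, and use the identity $\tpb_{(k),s}\tpb^*_{(k),s}N^{q+1}_k\tpb_{(k),s}=\tpb_{(k),s}$ from (\ref{4.4 eq 2}) together with $\supp\rho_k\cap\supp\chi_k=\emptyset$ to kill the main term. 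What survives carries a factor $\pb\rho_k=O((C_k)^{-\epsilon})$ and the same type of expression with a slightly larger cutoff $\tilde\rho_k$, still disjoint from $\supp\chi_k$. Repeating this and invoking the $k$-uniform boundedness of $N^{q\pm1}_k$ (Corollary~\ref{4.4 cor N}) and Lemma~\ref{4.4 lem 1} gives $(C_k)^{-d}$ after finitely many steps, with $\|u\|_{\tomega_{(k)}}$ on the right. No decay of $u$ is ever used; the gain comes entirely from the smallness of the cutoff derivatives and the disjointness of supports.
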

\begin{proof}
For any $u\in \Omega^{0,q}_c(\Cn)$, by Theorem \ref{4.4 thm B=I-},
\begin{align}\label{4.5 lem1 (0)}
\rho_k\Tilde{B}^{q}_{(k),s}\chi_ku&=\rho_k\left(\Id-\tpb^*_{(k),s}N^{q+1}_k\tpb_{(k),s}-\tpb_{(k),s}N^{q-1}_k\tpb^*_{(k),s}\right)\chi_ku \\ &= -\rho_k\tpb^*_{(k),s}N^{q+1}_k\tpb_{(k),s}\chi_ku-\rho_k\tpb_{(k),s}N^{q+1}_k\tpb^*_{(k),s}\chi_k u.\notag 
 \end{align}Now, we first aim to estimate $\|\rho_k\tpb^{*}_{(k),s}N^{q+1}_k\tpb_{(k),s}\chi_ku\|_{\tomega_{(k)}}$. Observe that
 \begin{align*}
\|\rho_k\tpb^{*}_{(k),s}N^{q+1}_k\tpb_{(k),s}\chi_ku\|^2_{\tomega_{(k)}}&=\left(\rho_k\tpb^{*}_{(k),s}N^{q+1}_k\tpb_{(k),s}\chi_ku\mid \rho_k\tpb^{*}_{(k),s}N^{q+1}_k\tpb_{(k),s}\chi_ku\right)_{\tomega_{(k)}}\\&
=\left(N^{q+1}_k\tpb_{(k),s}\chi_ku\mid \tpb_{(k),s}\rho^2_k\tpb^{*}_{(k),s}N^{q+1}_k\tpb_{(k),s}\chi_ku\right)_{\tomega_{(k)}}\notag\\&=\left(\Tilde{\rho}_kN^{q+1}_k\tpb_{(k),s}\chi_ku\mid \tpb_{(k),s}\rho^2_k\tpb^{*}_{(k),s}N^{q+1}_k\tpb_{(k),s}\chi_ku\right)_{\tomega_{(k)}}\notag\\&\leq\|\Tilde{\rho}_k N^{q+1}_k\tpb_{(k),s}\chi_ku\|_{\tomega_{(k)}}\|\tpb_{(k),s}\rho^2_k\tpb^{*}_{(k),s}N^{q+1}_k\tpb_{(k),s}\chi_ku\|_{\tomega_{(k)}}, \notag
 \end{align*}
where $\Tilde{\rho}_k\in \mathscr{C}^{\infty}_c(\Cn)$ is another cut-off function such that $\supp \Tilde{\rho}_k\supset \supp \rho_k$ and $\supp \Tilde{\rho}_k \cap \supp\chi_k=\emptyset$. By direct computation,
\begin{align*}
\tpb_{(k),s}\rho^2_k\tpb^{*}_{(k),s}N^{q+1}_k\tpb_{(k),s}\chi_k u&=(\pb \rho^2_k)\wedge \tpb^{*}_{(k),s}N^{q+1}_k\tpb_{(k),s}\chi_ku+ \rho^2_k\tpb_{(k),s}\tpb^{*}_{(k),s}N^{q+1}_k\tpb_{(k),s}\chi_k u\\&=(\pb \rho^2_k)\wedge \tpb^{*}_{(k),s}N^{q+1}_k\tpb_{(k),s}\chi_ku+ \rho^2_k\tpb_{(k),s}\chi_k u \notag\\&= (\pb \rho^2_k)\wedge \tpb^{*}_{(k),s}N^{q+1}_k\tpb_{(k),s}\chi_ku,\notag\end{align*}
where the second equality is from (\ref{4.4 eq 2}) and the third is by the fact that $\supp\rho_k \cap \supp \chi_k=\emptyset$.
We apply this computation to continue the previous estimate and get
\begin{align}\label{4.5 lem1 (3)}
\|\rho_k\tpb^{*}_{(k),s}N^{q+1}_k\tpb_{(k),s}\chi_k u\|^2_{\tomega_{(k)}}&\leq\|\Tilde{\rho}_k N^{q+1}_k\tpb_{(k),s}\chi_ku\|_{\tomega_{(k)}}\|(\pb \rho^2_k)\wedge \tpb^{*}_{(k),s}N^{q+1}_k\tpb_{(k),s}\chi_ku\|_{\tomega_{(k)}}\\&\lesssim (C_k)^{-\epsilon}\|\Tilde{\rho}_kN^{q+1}_k\tpb_{(k),s}\chi_k u\|_{\tomega_{(k)}}\|\Tilde{\rho}_k\tpb^*_{(k),s}N^{q+1}_{k}\tpb_{(k),s}\chi_k u\|_{\tomega_{(k)}},\notag 
\end{align} 
where the term $(C_k)^{-\epsilon}$ arises during the computation of $\pb\rho_k$. Moreover, the sequence $\Tilde{\rho}_k$ can be taken to satisfy the condition $\sup_{|\alpha|=1} |\p^{\alpha}\Tilde{\rho}_k|\lesssim (C_k)^{-\epsilon}$ since $\supp \Tilde{\rho}\subset \{z\in\Cn;2/7C_k^\epsilon<|z|<C_k^\epsilon\}$.  To conduct an iteration process, we need to show the following claim:

\begin{claim}
   There exists $\Tilde{\Tilde{\rho}}_k\in \mathscr{C}^{\infty}_c(\Cn)$ with $\supp \Tilde{\Tilde{\rho}}_k\supset \supp \Tilde{\rho}_k$ and $\supp \Tilde{\Tilde{\rho}}_k \cap \supp\chi_k=\emptyset$ such that $\sup_{|\alpha|=1} |\p^{\alpha}\Tilde{\Tilde{\rho}}_k|\lesssim (C_k)^{-\epsilon}$ and \begin{multline*}
   \|\Tilde{\rho}_kN^{q+1}_{k}\tpb_{(k),s}\chi_k u\|_{\tomega_{(k)}}\|\Tilde{\rho}_k\tpb^*_{(k),s}N^{q+1}_{k}\tpb_{(k),s}\chi_k u\|_{\tomega_{(k)}} \\ \lesssim (C_k)^{-\epsilon}\|\Tilde{\Tilde{\rho}}_kN^{q+1}_k\tpb_{(k),s}\chi_k u\|_{\tomega_{(k)}}\|\Tilde{\Tilde{\rho}}_k\tpb^*_{(k),s}N^{q+1}_{k}\tpb_{(k),s}\chi_k u\|_{\tomega_{(k)}}.    
   \end{multline*}
    
\end{claim}
To show the claim, by Lemma \ref{4.4 lem 1}, we get 
\[\|\Tilde{\rho}_kN^{q+1}_k\tpb_{(k),s}\chi_k u\|_{\tomega_{(k)}}\lesssim \|\tpb_{(k),s}\Tilde{\rho}_kN^{q+1}_{k}\tpb_{(k),s}\chi_k u\|_{\tomega_{(k)}}+\|\tpb^*_{(k),s}\Tilde{\rho}_kN^{q+1}_{k}\tpb_{(k),s}\chi_k u\|_{\tomega_{(k)}}.\]
Moreover, we compute directly that
\begin{align*}
\tpb_{(k),s}\Tilde{\rho}_kN^{q+1}_k\tpb_{(k),s}\chi_k u&=(\pb \Tilde{\rho}_k)\wedge N^{q+1}_k\tpb_{(k),s}\chi_k u+\Tilde{\rho}_k\tpb_{(k),s}N^{q+1}_k\tpb_{(k),s}\chi_k u =(\pb \Tilde{\rho}_k)\wedge N^{q+1}_k\tpb_{(k),s}\chi_k u;\\
\tpb^*_{(k),s}\Tilde{\rho}_kN^{q+1}_k\tpb_{(k),s}\chi_k u&=-\sum_{i=1}^n(\dfrac{\p \Tilde{\rho}_k}{\p z^i}d\zb^i\wedge_{\tomega_{(k)}})^* N^{q+1}_k\tpb_{(k),s}\chi_k u+\Tilde{\rho}_k\tpb^*_{(k),s}N^{q+1}_k\tpb_{(k),s}\chi_k u.
\end{align*}Substitute these equations into the estimate and then dominate $\|\Tilde{\rho}_kN^{q+1}_k\tpb_{(k),s}\chi_k u\|_{\tomega_{(k)}}$ by   \begin{align*}
 \|(\pb \Tilde{\rho}_k)\wedge N^{q+1}_k\tpb_{(k),s}\chi_k u\|_{\tomega_{(k)}}&+ \|\sum_{i=1}^n(\dfrac{\p \Tilde{\rho}_k}{\p z^i}d\zb^i\wedge_{\tomega_{(k)}})^* N^{q+1}_k\tpb_{(k),s}\chi_k u\|_{\tomega_{(k)}}+\|\Tilde{\rho}_k\tpb^*_{(k),s}N^{q+1}_k\tpb_{(k),s}\chi_k u\|_{\tomega_{(k)}}\\&\lesssim (C_k)^{-\epsilon}\|\Tilde{\Tilde{\rho}}_k N^{q+1}_k\tpb_{(k),s}\chi_k u\|_{\tomega_{(k)}}+\|\Tilde{\Tilde{\rho}}_k\tpb^*_{(k),s}N^{q+1}_k\tpb_{(k),s}\chi_k u\|_{\tomega_{(k)}},    
\end{align*}for some $\Tilde{\Tilde{\rho}}_k$ as described above. So, \begin{multline*}
\|\Tilde{\rho}_kN^{q+1}_{k}\tpb_{(k),s}\chi_k u\|_{\tomega_{(k)}}\|\Tilde{\rho}_k\tpb^*_{(k),s}N^{q+1}_{k}\tpb_{(k),s}\chi_k u\|_{\tomega_{(k)}}\\ \lesssim (C_k)^{-\epsilon}\|\Tilde{\Tilde{\rho}}_k N^{q+1}_k\tpb_{(k),s}\chi_k u\|_{\tomega_{(k)}}\|\Tilde{\Tilde{\rho}}_k\tpb^*_{(k),s}N^{q+1}_{k}\tpb_{(k),s}\chi_k u\|_{\tomega_{(k)}} +\|\Tilde{\Tilde{\rho}}_k\tpb^*_{(k),s}N^{q+1}_k\tpb_{(k),s}\chi_k u\|^2_{\tomega_{(k)}}.
\end{multline*}
For the last term of the right-hand side, we replace the $\rho_k$ by $\Tilde{\Tilde{\rho}}_k$ in (\ref{4.5 lem1 (3)}) and get
 \begin{equation*}
\|\Tilde{\Tilde{\rho}}_k\tpb^*_{(k),s}N^{q+1}_k\tpb_{(k),s}\chi_k u\|^2_{\tomega_{(k)}} \lesssim (C_k)^{-\epsilon}\|\Tilde{\Tilde{\Tilde{\rho}}}_kN^{q+1}_k\tpb_{(k),s}\chi_k u\|_{\tomega_{(k)}}\|\Tilde{\Tilde{\Tilde{\rho}}}_k\tpb^*_{(k),s}N^{q+1}_{k}\tpb_{(k),s}\chi_k u\|_{\tomega_{(k)}}.
 \end{equation*}Combining the above estimates, we have completed the claim. Next, by (\ref{4.5 lem1 (3)}) and  iterating the claim, we can conclude that for any integer $d\in\N$, there exists a constant $C$ and $\Tilde{\rho}_k\in \mathscr{C}^{\infty}_c(\Cn)$ with $\supp \Tilde{\rho}_k\supset \supp \rho_k$ and $\supp \Tilde{\rho}_k \cap \supp\chi_k=\emptyset$ such that
 \[
\|\rho_k\tpb^{*}_{(k),s}N^{q+1}_k\tpb_{(k),s}\chi_k u\|^2_{\tomega_{(k)}}\lesssim (C_k)^{-d}\|\Tilde{\rho}_kN^{q+1}_{k}\tpb_{(k),s}\chi_k u\|_{\tomega_{(k)}}\|\Tilde{\rho}_k\tpb^*_{(k),s}N^{q+1}_{k}\tpb_{(k),s}\chi_k u\|_{\tomega_{(k)}}.\]
 Finally, we need to show the following fact:
 \begin{claim}
For all $v\in \Omega^{0,q}_c(\Cn)$
\begin{equation*}
\|\tpb^*_{(k),s}N_k^{q+1}\tpb_{(k),s}v\|_{\tomega_{(k)}}\leq \|v\|_{\tomega_{(k)}} \quad;\quad \|N_k^{q+1}\tpb_{(k),s}v\|_{\tomega_{(k)}}\lesssim \|v\|_{\tomega_{(k)}}.   
\end{equation*}
  \end{claim}For the first term, by (\ref{4.4 eq 2}), we compute that
 \begin{align*}
\|\tpb^*_{(k),s}N_k^{q+1}\tpb_{(k),s}v\|^2_{\tomega_{(k)}}&=\left(N_k^{q+1}\tpb_{(k),s}v\mid \tpb_{(k),s}\tpb^*_{(k),s}N_k^{q+1}\tpb_{(k),s}v\right)_{\tomega_{(k)}}=\left(N_k^{q+1}\tpb_{(k),s}v\mid \tpb_{(k),s}v\right)_{\tomega_{(k)}}\\ &=\left(\tpb^*_{(k),s}N_k^{q+1}\tpb_{(k),s}v\mid v\right)_{\tomega_{(k)}}\leq\|\tpb^*_{(k),s}N_k^{q+1}\tpb_{(k),s}v\|_{\tomega_{(k)}}\|v\|_{\tomega_{(k)}}.\end{align*} We get $\|\tpb^*_{(k),s}N_k^{q+1}\tpb_{(k),s}v\|_{\tomega_{(k)}}\leq \|v\|_{\tomega_{(k)}}$.  
 The second term follows by Lemma \ref{4.4 lem 1} that
 \begin{align*}
\|N_k^{q+1}\tpb_{(k),s}v\|_{\tomega_{(k)}}&\lesssim\|\tpb_{(k),s}N_k^{q+1}\tpb_{(k),s}v\|_{\tomega_{(k)}}+\|\tpb^*_{(k),s}N_k^{q+1}\tpb_{(k),s}v\|_{\tomega_{(k)}}\\ &=\|\tpb^*_{(k),s}N_k^{q+1}\tpb_{(k),s}v\|_{\tomega_{(k)}}\leq\|v\|_{\tomega_{(k)}},    
 \end{align*}since $\tpb_{(k),s}N_k^{q+1}\tpb_{(k),s}=0$. We completed the proof of the second claim. After combining all the above results, we know that for any integer $d\in\N$, there exists a constant $C$ such that
 \begin{equation}\label{040766}
\|\rho_k\tpb^{*}_{(k),s}N^{q+1}_k\tpb_{(k),s}\chi_k u\|_{\tomega_{(k)}}\leq C(C_k)^{-d}\|u\|_{\tomega_{(k)}}.    
 \end{equation}
Symmetrically, we can literally repeat the process to show the analogous statement:
\begin{equation}\label{0407329}
\|\rho_k\tpb_{(k),s}N^{q+1}_k\tpb^{*}_{(k),s}\chi_ku\|_{\tomega_{(k)}} \lesssim C(C_k)^{-d}\|u\|_{\tomega_{(k)}}.
\end{equation}
Then the lemma follows by (\ref{4.5 lem1 (0)}), (\ref{040766}), (\ref{0407329}) and a density argument. 
 \end{proof}
\begin{cor}\label{4.5 thm2 cor}
    For all $d\in\N$, \[\|\tpb^{*}_{(k),s}u_{(k)}\|_{\tomega_{(k)}}^2+\|\tpb_{(k),s}u_{(k)}\|_{\tomega_{(k)}}^2=O((C_k)^{-d}), \quad \text{as }\, k\To\infty. 
    \]
\end{cor}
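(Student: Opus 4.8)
The plan is to expand $u_{(k)} = \tchi_k \tBk_{(k),s} \chi_k u$ using the product rule and then estimate each resulting piece. First I would write $\tpb_{(k),s} u_{(k)} = (\pb\tchi_k)\wedge \tBk_{(k),s}\chi_k u + \tchi_k \tpb_{(k),s}\tBk_{(k),s}\chi_k u$, and since the image of $\tBk_{(k),s}$ lies in $\Ker\Box^{q\sim}_{(k),s}$ we have $\tpb_{(k),s}\tBk_{(k),s}\chi_k u = 0$, so $\tpb_{(k),s}u_{(k)} = (\pb\tchi_k)\wedge\tBk_{(k),s}\chi_k u$. The gradient $\pb\tchi_k$ is supported in the annulus $\{(3/7)(C_k)^\epsilon < |z| < (6/7)(C_k)^\epsilon\}$ and has size $O((C_k)^{-\epsilon})$; crucially this annulus is contained in $\{(2/7)(C_k)^\epsilon < |z| < (C_k)^\epsilon\} \supset \supp\rho_k$, so on the support of $\pb\tchi_k$ we can write $\tBk_{(k),s}\chi_k u = \rho_k\tBk_{(k),s}\chi_k u$ (after choosing $\rho$ to equal $1$ on a slightly larger annulus, exactly as set up before the lemma). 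Therefore $\|\tpb_{(k),s}u_{(k)}\|_{\tomega_{(k)}} \lesssim (C_k)^{-\epsilon}\|\rho_k\tBk_{(k),s}\chi_k u\|_{\tomega_{(k)}}$, and Lemma \ref{4.5 thm2 lem1} gives $\|\rho_k\tBk_{(k),s}\chi_k u\|_{\tomega_{(k)}} = O((C_k)^{-d'})$ for every $d'$, so the whole term is $O((C_k)^{-d})$ for every $d$.

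The same argument applies verbatim to $\tpb^*_{(k),s}u_{(k)}$: one writes $\tpb^*_{(k),s}u_{(k)} = -\sum_i(\tfrac{\p\tchi_k}{\p z^i}d\zb^i\wedge_{\tomega_{(k)}})^*\tBk_{(k),s}\chi_k u + \tchi_k\tpb^*_{(k),s}\tBk_{(k),s}\chi_k u$, and again $\tpb^*_{(k),s}\tBk_{(k),s}\chi_k u = 0$ because the range of $\tBk_{(k),s}$ is annihilated by both $\tpb_{(k),s}$ and $\tpb^*_{(k),s}$. So $\tpb^*_{(k),s}u_{(k)}$ is also controlled by $(C_k)^{-\epsilon}\|\rho_k\tBk_{(k),s}\chi_k u\|_{\tomega_{(k)}}$, hence $O((C_k)^{-d})$ for every $d$. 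Squaring and adding the two estimates yields the corollary.

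One subtlety to check carefully is the interplay of supports: I need $\supp(\pb\tchi_k)$ to sit inside the region where $\rho_k \equiv 1$, so that replacing $\tBk_{(k),s}\chi_k u$ by $\rho_k\tBk_{(k),s}\chi_k u$ introduces no error on $\supp(\pb\tchi_k)$. With $\tchi_k(z) = \chi(7z/(3(C_k)^\epsilon))$, the derivative $\pb\tchi_k$ is supported where $1 < |7z/(3(C_k)^\epsilon)| < 2$, i.e. $(3/7)(C_k)^\epsilon < |z| < (6/7)(C_k)^\epsilon$, and $\rho$ was chosen to be $\equiv 1$ on $\{3/7 < |z| < 6/7\}$, hence $\rho_k \equiv 1$ on exactly this annulus — so the replacement is exact. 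The main (though by now routine) obstacle is simply invoking Lemma \ref{4.5 thm2 lem1} with the right polynomial power: one fixes an arbitrary target exponent $d$, picks $d' = d$ in that lemma, absorbs the extra $(C_k)^{-\epsilon}$ harmlessly, and concludes. No new analytic input is needed beyond Lemma \ref{4.5 thm2 lem1} and the Hodge decomposition Theorem \ref{4.4 thm B=I-}.
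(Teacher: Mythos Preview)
Your proposal is correct and follows essentially the same route as the paper: expand $\tpb_{(k),s}u_{(k)}$ and $\tpb^*_{(k),s}u_{(k)}$ by the Leibniz rule, kill the second term using $\tpb_{(k),s}\Tilde{B}^{q}_{(k),s}=\tpb^*_{(k),s}\Tilde{B}^{q}_{(k),s}=0$, observe that $\supp(\pb\tchi_k)\subset\{\rho_k\equiv 1\}$, and invoke Lemma~\ref{4.5 thm2 lem1}. The only cosmetic difference is that you track the extra $(C_k)^{-\epsilon}$ from $\pb\tchi_k$ explicitly (harmless, since it is absorbed into the arbitrary exponent), and you cite Theorem~\ref{4.4 thm B=I-} where really only the definition of $\Tilde{B}^{q}_{(k),s}$ as the projection onto $\Ker\Box^{q\sim}_{(k),s}$ is used.
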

\begin{proof}
    Recall the fact that $\tpb_{(k),s}\Tilde{B}^{q}_{(k),s}=0$ and $\tpb^*_{(k),s}\Tilde{B}^{q}_{(k),s}=0$. 
    \begin{align*}
    \tpb_{(k),s}u_{(k)}&=(\pb \Tilde{\chi}_k)\wedge\Tilde{B}^{q}_{(k),s} \chi_ku+\Tilde{\chi}_k\tpb_{(k),s}\Tilde{B}^{q}_{(k),s} \chi_ku=(\pb \Tilde{\chi}_k)\wedge\Tilde{B}^{q}_{(k),s} \chi_ku;\\
\tpb^*_{(k),s}u_{(k)}&=-\sum_{i=1}^{n}\dfrac{\p \Tilde{\chi}_k}{\p z^i}(d\zb^i)\wedge^*_{\tomega_{(k)}}\Tilde{B}^{q}_{(k),s} \chi_ku+\Tilde{\chi}_k \ \tpb^*_{(k),s}\Tilde{B}^{q}_{(k),s} \chi_ku=-\sum_{i=1}^{n}(\dfrac{\p \Tilde{\chi}_k}{\p z^i}(d\zb^i)\wedge_{\tomega_{(k)}}^*)\Tilde{B}^{q}_{(k),s} \chi_ku.   
    \end{align*}Observe that derivatives of $\Tilde{\chi}_k$ are supported in the annuli $\{3(C_k)^{\epsilon}/7<|z|<6(C_k)^{\epsilon}/7\}$ and $\rho_k\equiv 1$ on the annuli. We can see \begin{align*}
\|\tpb_{(k),s}u_{(k)}\|^2_{\tomega_{(k)}} \lesssim \|\rho_k\Tilde{B}^{q}_{(k),s}\chi_k u\|^2_{\tomega_{(k)}} \quad;\quad 
\|\tpb^*_{(k),s}u_{(k)}\|^2_{\tomega_{(k)}} \lesssim \|\rho_k\Tilde{B}^{q}_{(k),s}\chi_k u\|^2_{\tomega_{(k)}}.
\end{align*} By Lemma \ref{4.5 thm2 lem1}, we can immediately derive the corollary.
\end{proof}
\begin{thm}\label{4.5 thm2}

\begin{equation*}
\|P^q_{(k),s}u_{(k)}-u_{(k)}\|_{\omega_{(k)}} \rightarrow 0, \quad \text{as }\,k\To\infty.
\end{equation*}
\end{thm}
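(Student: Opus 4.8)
The goal is to compare $P^q_{(k),s}$ — which is either the scaled localized Bergman projection or the scaled localized spectral projection for $\Box^q_{k,s}$ — with the Bergman projection $\tilde{B}^q_{(k),s}$ of the \emph{extended} Laplacian on $\Cn$, which by Lemma \ref{4.5 thm2 lem1} and Corollary \ref{4.5 thm2 cor} behaves very well on the test section $u_{(k)} = \tilde{\chi}_k\,\tilde{B}^q_{(k),s}\,\chi_k u$. The plan is to unwind the scaling to bring the estimate back to the manifold level, apply the spectral gap estimate (\ref{Hohohoho}), and then scale back up. Concretely, I would first write $v_{(k)} := C_k^{n/2}\,u_{(k)}(\sqrt{C_k}\,z) \otimes e^{\phi_k}\otimes s_k$, the corresponding section on $D \subset M$ obtained from the unitary identifications (\ref{sca i1})--(\ref{sca i4}); note $v_{(k)}\in\Omega^{0,q}_c(D,L_k)$ since $\supp \tilde{\chi}_k \subset B((C_k)^\epsilon)\subset B(\sqrt{C_k})$ for $k$ large. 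By the intertwining relation (\ref{sca laplacian}), $\big(\Box^q_k v_{(k)}\,\big|\,v_{(k)}\big)_{\omega,\phi_k,M} = C_k\big(\Box^q_{(k),s}u_{(k)}\,\big|\,u_{(k)}\big)_{\omega_{(k)}}$; but $\Box^q_{(k),s}$ and the extended $\Box^{q\sim}_{(k),s}$ agree on $B((C_k)^\epsilon)$, so this equals $C_k\big(\|\tpb_{(k),s}u_{(k)}\|^2_{\tomega_{(k)}}+\|\tpb^*_{(k),s}u_{(k)}\|^2_{\tomega_{(k)}}\big)$ up to the harmless discrepancy between $\omega_{(k)}$ and $\tomega_{(k)}$, which is $O((C_k)^{-d})$ for every $d$ by Corollary \ref{4.5 thm2 cor}.

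Next I would feed this into the spectral gap inequality (\ref{Hohohoho}): for the chosen $d$,
\[
\|(\Id - P^q_k)v_{(k)}\|^2_{\omega,\phi_k,M} \lesssim (C_k)^{d}\big(\Box^q_k v_{(k)}\,\big|\,v_{(k)}\big)_{\omega,\phi_k,M} \lesssim (C_k)^{d+1}\cdot O((C_k)^{-d'})
\]
for any $d'$, hence $\|(\Id - P^q_k)v_{(k)}\|^2_{\omega,\phi_k,M} \to 0$. Scaling back down via the unitary identification, $P^q_k v_{(k)}$ corresponds to $P^q_{(k),s}u_{(k)}$, so $\|P^q_{(k),s}u_{(k)} - u_{(k)}\|_{\omega_{(k)}} \to 0$, which is exactly the claim. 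In the spectral-kernel case, $P^q_{k}$ above means $P^q_{k,(C_k)^{-d}}$ and one invokes the elementary estimate (\ref{natural}) in place of the assumed gap; the algebra is identical. One subtlety to check carefully: $v_{(k)}$ must genuinely lie in $\Omega^{0,q}_c(D,L_k)$ (compact support strictly inside $D$, matching the hypothesis ``$u\in\Omega^{0,q}_c(D,L_k)$'' of (\ref{Hohohoho})) — this is where the cut-off $\tilde\chi_k$ supported in $B((C_k)^\epsilon)$ and the choice $\epsilon<1/6$ are used, since after unscaling the support sits in $B((C_k)^{\epsilon-1/2})\subset D$.

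The main obstacle I anticipate is bookkeeping the mismatch between the three metrics in play — $\omega_{(k)}$ (the scaled manifold metric), $\tomega_{(k)}$ (its extension), and $\omega_0$ (the model) — together with the corresponding weights, along the chain of identifications. None of these introduce genuine loss (all comparisons are $1+o(1)$ by (\ref{sca con}), (\ref{1508}), (\ref{compatible}), (\ref{uni 3.5})), but one has to be vigilant that the $O((C_k)^{-d})$ decay from Corollary \ref{4.5 thm2 cor} survives multiplication by the $(C_k)^{d+1}$ coming from the gap constant $(C_k)^d$ in (\ref{Hohohoho}) and the factor $C_k$ from (\ref{sca laplacian}); this is why Corollary \ref{4.5 thm2 cor} was proved for \emph{all} $d$, so one simply picks $d'$ large enough at the end. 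A second, lesser point is that $P^q_{(k),s}$ is defined via the genuine Laplacian $\Box^q_{k,s}$ on $B(1)$, not the extended one, so the replacement $\Box^q_{(k),s}\leftrightarrow\Box^{q\sim}_{(k),s}$ can only be made after localizing to $B((C_k)^\epsilon)$ — but $u_{(k)}$ is supported there, so everything is consistent.
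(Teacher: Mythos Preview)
Your proposal is correct and follows essentially the same route as the paper: unscale $u_{(k)}$ to a compactly supported section on $D\subset M$, apply the spectral gap inequality (\ref{Hohohoho}) (or (\ref{natural}) in the spectral case), use the scaling relation (\ref{sca pb})/(\ref{sca laplacian}) to convert the resulting quadratic form back to $\|\tpb_{(k),s}u_{(k)}\|^2+\|\tpb^{*}_{(k),s}u_{(k)}\|^2$, and invoke Corollary \ref{4.5 thm2 cor} with $d'$ large enough to absorb the factor $(C_k)^{d+1}$. Your discussion of the metric bookkeeping and the support condition is in fact more explicit than the paper's own proof, which silently identifies $\pb_{(k),s}$ with $\tpb_{(k),s}$ on $\supp u_{(k)}$ and passes between $\omega_{(k)}$ and $\tomega_{(k)}$ without comment.
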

\begin{proof}
   Define $
    u_{k}(z):=(C_k)^{n/2}u_{(k)}(\sqrt{C_k}z)\in\Omega^{0,q}_c(B(1))\subset \Omega^{0,q}_c(M)$. Then we have
    \[
    \|P^q_{(k),s}u_{(k)}-u_{(k)}\|_{\omega_{(k)},B(\sqrt{C_k})}=\|P^q_{(k),s} u_k-u_k\|_{\omega,B(1)}.
    \]By (\ref{sca i3}) and (\ref{Hohohoho}),
    \begin{align*}
     \|P^q_{k,s} u_k-u_k\|^2_{\omega,B(1)}&=\|P^q_{k}e^{\phi_k} u_k\otimes s_k-e^{\phi_k} u_k\otimes s_k\|^2_{\omega,B(1)}\\ &\lesssim (C_k)^d\left(\Box^{q}_{k}e^{\phi_k}u_k\otimes s_k \mid e^{\phi_k}u_k\otimes s_k\right)_{\omega}\\&=(C_k)^d\left(\|\pb^{q,*}_{k,s}u_k\|^2_{\omega}+\|\pb^q_{k,s}u_k\|^2_{\omega}\right)\\&= (C_k)^{(d+1)}\left(\|\pb^{q,*}_{(k),s}u_{(k)}\|^2_{\omega}+\|\pb^q_{(k),s}u_{(k)}\|^2_{\omega}\right)\To 0.
    \end{align*}
    The last equality is by (\ref{sca pb}), and the last convergence is from  Corollary \ref{4.5 thm2 cor}.
   \end{proof}
Before overcoming the Statement \ref{4.2 statement 2} for the case $p\in M(q)$, we need another Lemma:
\begin{lem}\label{4.3 lem 3}
 For any $v\in \Omega^{0,q}_c(\Cn)$,
 \[
 \left(v \mid P^q_{(k),s}\chi_ku -P^q_{s} u\right)_{\omega_0} \rightarrow 0, \quad \text{as }\,k\To\infty.
 \] 
\end{lem}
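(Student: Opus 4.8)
The plan is to compare $\left(v\mid P^q_{(k),s}\chi_k u\right)_{\omega_0}$ with $\left(v\mid P^q_s u\right)_{\omega_0}$ by inserting an auxiliary cut-off at a \emph{fixed} scale, thereby splitting the difference into a ``frozen'' piece to which the local uniform convergence $P^q_{(k),s}(z,w)\to P^q_s(z,w)$ applies directly, together with two tail pieces controlled by uniform operator-norm bounds. First I would fix $v\in\Omega^{0,q}_c(\Cn)$ and $\varepsilon>0$ and choose $\eta_R\in\mathscr{C}^\infty_c(\Cn)$ with $0\le\eta_R\le 1$, $\eta_R\equiv 1$ on $B(R)$ and $\supp\eta_R\subset B(2R)$, the radius $R$ to be fixed shortly. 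Since $\epsilon<1/2$, for all large $k$ one has $\supp\chi_k\cup\supp\eta_R\subset B(\sqrt{C_k})$, so $P^q_{(k),s}$ applies to $\chi_k u$, to $\eta_R u$ and to $(\chi_k-\eta_R)u$, and I would write
\[
\left(v\mid P^q_{(k),s}\chi_k u-P^q_s u\right)_{\omega_0}=\left(v\mid P^q_{(k),s}(\chi_k-\eta_R)u\right)_{\omega_0}+\left(v\mid P^q_{(k),s}\eta_R u-P^q_s\eta_R u\right)_{\omega_0}+\left(v\mid P^q_s(\eta_R-1)u\right)_{\omega_0}.
\]

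For the first term I would use that the operators $P^q_{(k),s}$ are bounded on $L^2_{\omega_0}(B(\sqrt{C_k}),\T)$ with a bound $C_0(p)$ independent of $k$, which follows from $\|P^q_k\|_{\omega,\phi_k,M}\le 1$ together with $(\ref{uni 1})$ and the norm comparability $(\ref{compatible})$. Since $(\chi_k-\eta_R)u$ vanishes on $B(R)$ once $k$ is large and is pointwise dominated by $2\aaa{u}_{\omega_0}$, Cauchy--Schwarz gives
\[
\aaa{\left(v\mid P^q_{(k),s}(\chi_k-\eta_R)u\right)_{\omega_0}}\le 2C_0(p)\,\|v\|_{\omega_0}\Big(\int_{\{|w|\ge R\}}\aaa{u}^2_{\omega_0}\,dV_{\omega_0}\Big)^{1/2},
\]
and, since $u\in L^2_{\omega_0}(\Cn,\T)$, the right-hand side is $<\varepsilon/3$ once $R$ is chosen large, uniformly in all large $k$. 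For the third term, Lemma \ref{4.2 Lem B op} gives $\|P^q_s\|\le 1$, and $(\eta_R-1)u$ is supported in $\{|w|\ge R\}$, so the same $R$ bounds it by $\|v\|_{\omega_0}\big(\int_{\{|w|\ge R\}}\aaa{u}^2_{\omega_0}\,dV_{\omega_0}\big)^{1/2}<\varepsilon/3$.

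With $R$ now fixed, the middle term only involves the compactly supported sections $v$ and $\eta_R u$, both supported in a common fixed compact $K\subset\Cn$; for $k$ large, $K\times K\subset B(\sqrt{C_k})\times B(\sqrt{C_k})$, so it equals the pairing of $v$ against $\big(P^q_{(k),s}-P^q_s\big)(\eta_R u)$, whose Schwartz kernel is $P^q_{(k),s}(z,w)-P^q_s(z,w)\to 0$ uniformly on $K\times K$. Dominated convergence (the integrand being bounded, $K$ of finite measure) then shows this term tends to $0$ as $k\to\infty$, hence is $<\varepsilon/3$ for $k$ large, and adding the three estimates proves the lemma. I expect the only genuinely delicate point to be the fact that $\chi_k$ moves with $k$ --- its support grows like $(C_k)^{\epsilon}$ --- so local uniform kernel convergence cannot be invoked against $\chi_k u$ directly; the fixed-scale cut-off $\eta_R$ is exactly what decouples the moving contribution, controlled only in $L^2$ via the uniform operator bounds and $u\in L^2$, from the frozen contribution where kernel convergence is available.
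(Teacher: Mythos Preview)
Your proof is correct and follows essentially the same approach as the paper's: both insert a fixed-scale cutoff (the paper uses $\chi_{n_0}$ from the existing family, you use a generic $\eta_R$) to split the difference into a ``frozen'' piece handled by local uniform kernel convergence and tail pieces controlled by the uniform operator bounds together with $u\in L^2_{\omega_0}$. The only cosmetic difference is that the paper combines your first and third terms into a single tail estimate $\|v\|_{\omega_0}\|(P^q_{(k),s}\chi_k-P^q_s)(\chi_{n_0}-1)u\|_{\omega_0}$, whereas you treat the $P^q_{(k),s}$ and $P^q_s$ tails separately.
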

\begin{proof}
    Let $v\in\Omega^{0,q}_c(\Cn)$. For any fixed positive integer $n_0\in\N$, observe that for large enough $k$, we can estimate that 
    \begin{multline*}
      \aaa{\left(v \mid P^q_{(k),s}\chi_ku -P^q_s u\right)_{\omega_0}}\leq \aaa{\left(v\mid (P^q_{(k),s}\chi_k-P^q_{s})\chi_{n_0} u\right)_{\omega_0}} \\+ \|v\|_{\omega_0}\|(P^q_{(k),s}\chi_k-P^q_s)(\chi_{n_0}-1)u\|_{\omega_0}.
      \end{multline*}
      Moreover, by (\ref{uni 1}) and (\ref{uni 3}), we see $P^q_{(k),s}\chi_k-P^q_s$ are uniformly bounded linear map on the space $L^2_{\omega_0}(\Cn,\T)$. Given an arbitrary number $\varepsilon>0$, we can fix $n_0$ large enough such that
    \[
    \|v\|_{\omega_0}\|(P^q_{(k),s}\chi_k-P^q_s)(\chi_{n_0}-1)u\|_{\omega_0}<\varepsilon/2 \quad \text{for all}\,k\in\N.
    \]
    Furthermore, by the assumption that $P^q_{(k),s}(z,w)\To P^q_{0,s}(z,w)$ locally uniformly, 
    \[
    \aaa{\left(v\mid (P^q_{(k),s}\chi_k-P^q_s)\chi_{n_0} u\right)_{\omega_0}} \rightarrow 0 \quad \text{as }\, k\To\infty.
    \] Finally, combining the estimates above, we obtain $\aaa{\left(v |P^q_{(k),s}\chi_k u-P^q_su\right)_{\omega_0}}<\varepsilon$ for large enough $k$.
\end{proof}
We are now ready to complete the proof of the Statemant \ref{4.2 statement 2} for the case $p\in M(q)$.

\begin{proof}[Proof of Statemant \ref{4.2 statement 2} for $p\in M(q)$]
   By Theorem \ref{4.1 Model theorem}, we may assume that $u$ is of the form $u=z^{\alpha}_qe^{-\sum|\lambda_i||z^i|^2}d\zb^I$ for some $\alpha\in\N_0^n$ by density argument. By (\ref{uni 1}), (\ref{uni 3}) and Lemma \ref{4.5 thm 1} and the decrease of $u$,  \begin{align}\label{4.5 thm Bu=u (1)}
       \| P^q_{(k),s} (\chi_k u-u_{(k)})\|_{\omega_0} \lesssim \|\chi_k u-u_{(k)}\|_{\omega_{0}}\leq\|\chi_k u-u\|_{\omega_0}+\|u-u_{(k)}\|_{\omega_{0}}\To 0.
   \end{align}
   To show $P^q_{s} u=u$, let $v\in\Omega^{0,q}_{c}(\Cn)$ and observe that
    \begin{multline*}
       \left(v\mid P^q_{s} u-u\right)_{\omega_0} =\left(v\mid P^q_{s} u-P^q_{(k),s} \chi_ku\right)_{\omega_0}+\left(v\mid P^q_{(k),s}(\chi_ku-u_{(k)})\right)_{\omega_0}\\+ \left(v\mid P^q_{(k),s} u_{(k)}-u_{(k)}\right)_{\omega_0}+\left(v\mid u_{(k)}-u\right)_{\omega_0}.  
    \end{multline*}
    By Lemma \ref{4.5 thm 1}, Theorem \ref{4.5 thm2}, Lemma \ref{4.3 lem 3} and (\ref{4.5 thm Bu=u (1)}), the right-hand side of the above equation must tend to zero.  
\end{proof}
\begin{rmk}
 In the function case $q=0$, we may obtain the same result by the process in \cite[Section 4.3]{me} under \textbf{spectral gap conditions of a suitable exponential rate} (cf. \cite[Def. 1.3]{me}) and replace $k$ by $C_k$.    
\end{rmk}
\subsection{Heat kernel proof for Bergman kernel asymptotic}\label{section 3.4}
In this section, we provide an idea to establish a simpler proof of main theorems (Theorem \ref{main theorem}, Theorem \ref{main theorem 2})
under a stronger spectral gap condition as follows:

\begin{assumption}[Global large spectral gap condition ]\label{spectral gap 2}Denote the global spectral gap $c'_k$ by
\[
c'_k:=\inf \left(\text{spec }\Box_k^q -\{0\}\right)
\] and assume it satifies the \textbf{large condition}: \[\liminf\frac{c_k^{'}}{C_k}>0.\]
\end{assumption} 
As we have already proven main theorems in the previous sections, we will not go into all the details, especially in the asymptotic of Heat kernels. Instead, we will focus on the application of Heat kernels to Bergman kernels.

Define the Heat operator $H^q_k(t):L^2_{\omega,\phi_k}(M,\TM\otimes L_k)\To L^2_{\omega,\phi_k}(M,\TM\otimes L_k)$ which is the functional calculus of $e^{-st}$ with respect to $\Box^q_k$. Define the Heat kernel $H^q_k(t,z,w)$ as the Schwartz kernel of $H^q_k(t)$. Recall the construction in Chapter \ref{Chapter 2}, we now consider the case \[ A^q_k(z,w)=H^q_k(\frac{t}{C_k},z,w). \]
To apply Theorem \ref{thm uniform bound}, we compute that\[
N_{m,k}=\sup_{s\in [0,\infty)}(\frac{s}{C_k})^me^{-\frac{s}{C_k}t}\lesssim t^{-m}.
\]Moreover, the operator norm of $H^q_k(\frac{t}{C_k},z,w)$ is less than or equal to one. All estimates smoothly depend on the parameter $t$. Hence, we have the following corollary:   
\begin{cor}(The local uniform bounds for Heat kernels)
In the localization process introduced in Section \ref{Aaaac}, the scaled Heat kernels \[C_k^{-n}H^{q}_{k,s}(t/C_k,z/\sqrt{C_k},w/\sqrt{C_k})\] are locally uniformly bounded in the $\mathscr{C}^{\infty}$-topology on $\R^{+}\times \Cn\times\Cn$.
\end{cor}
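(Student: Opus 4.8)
The plan is to obtain the statement from Theorem \ref{thm uniform bound} in precisely the way the spectral kernel is treated in Section \ref{Aaaac}; the single new ingredient is that the $\mathscr{C}^{\infty}$-topology on $\R^{+}\times\Cn\times\Cn$ also records differentiation in the time variable $t$, so the purely spatial estimates of Theorem \ref{thm uniform bound} must be promoted to estimates that are joint in $(t,z,w)$. Fix a compact set $K\subset\R^{+}\times\Cn\times\Cn$; since the heat kernel is singular as $t\downarrow 0$, $K$ necessarily lies in a slab $[a,b]\times U'\times U'$ with $0<a\le b<\infty$ and $U'\subset\Cn$ a bounded domain, and all the estimates below will be uniform for $t\in[a,b]$ and in $k$. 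Write $H^q_{(k),s}(t/C_k,z,w):=C_k^{-n}H^q_{k,s}(t/C_k,z/\sqrt{C_k},w/\sqrt{C_k})$ for the scaled localized heat kernel and $H^{q,I,J}_{(k),s}(t/C_k,z,w)$ for its components in the sense of Theorem \ref{thm uniform bound}.

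First I would deal with the time derivatives. By the spectral theorem (Theorem \ref{uni spe}) the heat operator is $H^q_k(\tau)=e^{-\tau\Box^q_k}$, and since $x\mapsto x^{j}e^{-x\tau}$ and all its $\tau$-derivatives are bounded uniformly for $\tau$ in a compact subinterval of $(0,\infty)$, one may differentiate under the spectral integral to get, for each $j\in\N$,
\[
\partial_t^{\,j}\,H^q_k(t/C_k)=(-1)^{j}\,C_k^{-j}(\Box^q_k)^{j}\,H^q_k(t/C_k),
\]
which is the functional calculus of $s\mapsto(-s/C_k)^{j}e^{-st/C_k}$. Because the localization and scaling operations of Section \ref{Aaaac} are $t$-independent and linear in the Schwartz kernel, and because $H^q_k(t,z,w)$ is jointly smooth in $(t,z,w)$ on $\R^{+}\times M\times M$ (parabolic hypoellipticity of $\partial_t+\Box^q_k$, consistent with the standing smoothness hypothesis of Section \ref{Aaaac}), it follows that $\partial_t^{\,j}H^q_{(k),s}(t/C_k,z,w)$ equals $(-1)^{j}$ times the scaled localized kernel of the operator $A^q_k:=C_k^{-j}(\Box^q_k)^{j}H^q_k(t/C_k)$, for each fixed $t>0$.

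Next I would apply Theorem \ref{thm uniform bound} to this $A^q_k$. Its functional calculus $a^q_k(s)=(s/C_k)^{j}e^{-st/C_k}$ is nonnegative, so, substituting $x=s/C_k$, one has $\|A^q_k\|_{\omega,\phi_k}=\sup_{x\ge0}x^{j}e^{-xt}=(j/(et))^{j}$ and $N_{m,k}=\sup_{x\ge0}x^{m+j}e^{-xt}=((m+j)/(et))^{m+j}$, both finite and independent of $k$. Hence, for every $\ell\in\N$, every pair of strictly increasing multi-indices $I,J\in(\N_0)^q$ and every $m$ with $2m\ge\ell+n$, Theorem \ref{thm uniform bound} furnishes a constant $C=C(\ell,U',p,j,m)$, independent of $k$, with
\[
\big|\partial_t^{\,j}H^{q,I,J}_{(k),s}(t/C_k,z,w)\big|_{\mathscr{C}^{\ell}(U'\times U')}\le C\big(1+t^{-j}+t^{-(m+j)}+t^{-(2m+j)}\big)\qquad(t>0).
\]

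Finally I would assemble these. Given $\ell\in\N$, fix $m$ with $2m\ge\ell+n$, sum the displayed bound over $j=0,1,\dots,\ell$ and over the finitely many pairs $I,J$, and take the supremum over $t\in[a,b]$ (finite, as the right-hand side is continuous and positive on $(0,\infty)$). This bounds the full $\mathscr{C}^{\ell}$-norm of $H^q_{(k),s}(t/C_k,z,w)$ on $[a,b]\times U'\times U'$, hence on $K$, by a constant depending only on $\ell$, $K$ and continuously on $p$, but not on $k$. As $\ell$ and $K$ are arbitrary, this is exactly local uniform boundedness in the $\mathscr{C}^{\infty}$-topology, proving the corollary. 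I expect the only genuinely delicate point to be the first step — the joint smoothness of $H^q_k(t,z,w)$ and the legitimacy of differentiating the spectral integral in $t$; both are standard (the former from parabolic regularity, the latter from dominated convergence against the spectral measure), and everything afterwards is a verbatim transcription of the spectral kernel argument with $(s/C_k)^{j}e^{-st/C_k}$ replacing $\mathbbm{1}_{[0,(C_k)^{-d}]}$.
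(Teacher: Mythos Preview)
Your proof is correct and follows the same approach as the paper: both apply Theorem \ref{thm uniform bound} to the heat operator $H^q_k(t/C_k)$, computing that $\|H^q_k(t/C_k)\|_{\omega,\phi_k}\le 1$ and $N_{m,k}\lesssim t^{-m}$, with bounds uniform in $k$ and locally in $t$. The paper dispatches the $t$-regularity with the single phrase ``all estimates smoothly depend on the parameter $t$,'' whereas you make this explicit by passing to $\partial_t^{\,j}H^q_k(t/C_k)=(-C_k^{-1}\Box^q_k)^jH^q_k(t/C_k)$ and reapplying Theorem \ref{thm uniform bound} with $a^q_k(s)=(s/C_k)^je^{-st/C_k}$; this is a genuine clarification, since smooth dependence of the \emph{bound} on $t$ does not by itself control $t$-derivatives of the kernel.
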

Similarly to the idea in the Bergman kernel case, we assume that the scaled Heat kernel $H^{q}_{(k),s}(t/C_k,z,w)$ converges to a kernel section $H^{q,s}_{0}(t,z,w)$ in the $\mathscr{C}^{\infty}$-topology. Next, we may follow the limiting process as Chen presented in \cite[Section 3.4]{jt} and get the result that $H^{q}_{0,s}(t,z,w)$ must be the Heat kernel in the model case of $\Cn$ equipped with the weight function $\phi_0$ and the standard Hermitian form $\omega_0$. We conclude the following theorem without proof. 
\begin{thm}\cite[Section 3.4]{jt}\label{Heat} Denote $H^q_0(t,z,w)$ as the Heat kernel with respect to the Kodaira Laplacian $\Box^q_0$ on $\Cn$ as considered in Theorem \ref{4.1 Model theorem}. Then the scaled heat kernels $H^{q}_{(k),s}(t/C_k,z,w)$ converge to $H^q_0(t,z,w)$ in $\mathscr{C}^{\infty}$-topology on $\R^+\times \Cn\times\Cn$.
\end{thm}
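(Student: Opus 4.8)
The plan is to run, in parabolic form, the same scheme used for the Bergman and spectral kernels in Section \ref{ssec}--Section \ref{1233333344}, following Chen's limiting argument in \cite[Section 3.4]{jt}. By the corollary stated just above, the family $H^q_{(k),s}(t/C_k,z,w)$ is locally uniformly bounded in the $\mathscr{C}^{\infty}$-topology on $\R^+\times\Cn\times\Cn$, so the Arzela--Ascoli theorem guarantees that every subsequence admits a sub-subsequence converging, in $\mathscr{C}^{\infty}$-topology on compact subsets of $\R^+\times\Cn\times\Cn$, to some kernel section $H^q_{0,s}(t,z,w)$. As in Section \ref{ssec} it therefore suffices to show that any such limit must be the heat kernel of $\Box^q_{0,s}$ (i.e. the claimed model heat kernel $H^q_0(t,z,w)$, where $\Box^q_0$ is as in Theorem \ref{4.1 Model theorem}); the full convergence then follows.

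First I would transfer the heat equation to the limit. The parabolic equation $\p_t H^q_k(t)=-\Box^q_k H^q_k(t)$ together with the scaling relation (\ref{sca laplacian}) gives $\big(\p_t+\Box^q_{(k),s}\big)\big[H^q_{(k),s}(t/C_k,\cdot,w)\big]=0$, and likewise in the $w$-variable; since all coefficients of $\Box^q_{(k),s}$ converge locally uniformly to those of $\Box^q_{0,s}$ by (\ref{sca con}), passing to the limit yields $\big(\p_t+\Box^q_{0,s}\big)H^q_{0,s}(t,\cdot,w)=0$ in the classical sense, and Lemma \ref{lem elliptic estimate} upgrades the convergence to $\mathscr{C}^{\infty}$ and shows $H^q_{0,s}$ is smooth on $\R^+\times\Cn\times\Cn$. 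Next I would package the limit as an operator: set $H^q_0(t)u(z):=\int_{\Cn}H^q_{0,s}(t,z,w)u(w)\,dm(w)$; exactly as in Lemma \ref{4.2 Lem B op}, using (\ref{uni 1}), (\ref{uni 4}) and $\|H^q_k(t/C_k)\|_{\omega,\phi_k}\le 1$, each $H^q_0(t)$ is bounded on $L^2_{\omega_0}(\Cn,\T)$ with norm at most $1$. Testing the semigroup identity $H^q_{(k),s}(t_1/C_k)\circ H^q_{(k),s}(t_2/C_k)=H^q_{(k),s}((t_1+t_2)/C_k)$ against compactly supported forms and letting $k\to\infty$ --- controlling the tails by a cut-off argument as in Lemma \ref{4.3 lem 3} --- gives $H^q_0(t_1)\circ H^q_0(t_2)=H^q_0(t_1+t_2)$, so together with the heat equation above, $\{H^q_0(t)\}_{t>0}$ is a contraction semigroup solving the model heat equation.

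It then remains to identify the initial datum and invoke uniqueness. Fixing $u\in\Omega^{0,q}_c(\Cn)$ and viewing $u_k:=C_k^{n/2}e^{\phi_k}u(\sqrt{C_k}\,\cdot)\otimes s_k\in\Dom\Box^q_k$, I would use that $\{e^{-\tau\Box^q_k}\}_{\tau\ge 0}$ is a strongly continuous contraction semigroup generated by the nonnegative self-adjoint operator $\Box^q_k$, hence $\|H^q_k(\tau)u_k-u_k\|_{\omega,\phi_k}\le\tau\,\|\Box^q_k u_k\|_{\omega,\phi_k}$ for domain elements; the scaling identities (\ref{sca laplacian}) and (\ref{sca i3}) give $\|\Box^q_k u_k\|_{\omega,\phi_k}=C_k\|\Box^q_{(k),s}u\|_{\omega_{(k)}}$, so with $\tau=t/C_k$ and (\ref{uni 4}) one obtains $\|H^q_{(k),s}(t/C_k)u-u\|_{\omega_{(k)}}\le t\,\|\Box^q_{(k),s}u\|_{\omega_{(k)}}$. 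Letting $k\to\infty$ (again with the tail cut-off, to pass from local uniform kernel convergence to $L^2$-convergence of $H^q_{(k),s}(t/C_k)u$) gives $\|H^q_0(t)u-u\|_{\omega_0}\le t\,\|\Box^q_{0,s}u\|_{\omega_0}$, which tends to $0$ as $t\to 0^+$. Thus $\{H^q_0(t)\}_{t\ge 0}$ is a $C_0$-contraction semigroup on $L^2_{\omega_0}(\Cn,\T)$ whose generator agrees with $-\Box^q_{0,s}$ on the core $\Omega^{0,q}_c(\Cn)$; by Hille--Yosida uniqueness it must equal $e^{-t\Box^q_{0,s}}$, so $H^q_{0,s}(t,z,w)$ is the model heat kernel, which completes the proof.

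The hard part will be the last step, the identification of the initial condition: the $\mathscr{C}^{\infty}$-bounds of Theorem \ref{thm uniform bound} degenerate as $t\to 0$ (one has $N_{m,k}\lesssim t^{-m}$), so the limit at $t=0$ cannot simply be read off the scaled kernels, and the remedy above --- propagating the identity operator forward along the $M$-side heat semigroups and exploiting only the uniform contraction estimate --- is the essential device. A secondary technical matter, exactly as in Section \ref{section 4.3}, is the density/approximation argument (Friedrichs' lemma, cf. \cite[Lemma 5]{Hou}) needed when transferring the heat equation and the semigroup law to the limit, together with the cut-off bookkeeping that converts local uniform convergence of the kernels into $L^2$-operator convergence; these are routine but must be carried out with care.
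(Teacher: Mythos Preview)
The paper does not give its own proof of this theorem --- it states the result ``without proof'' and defers to Chen \cite[Section 3.4]{jt}. Your outline (Arzela--Ascoli from the uniform $\mathscr{C}^\infty$ bounds, then identifying any subsequential limit via the heat equation, the initial condition, and a uniqueness argument) is precisely the scheme the paper sketches, and it is correct in spirit.

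There is, however, one concrete slip. The identity $H^q_{(k),s}(t_1/C_k)\circ H^q_{(k),s}(t_2/C_k)=H^q_{(k),s}((t_1+t_2)/C_k)$ that you invoke is \emph{false} for the scaled localized operators: by construction (cf.\ (\ref{040701})), $A^q_{(k),s}$ is ``extend by zero from $B(\sqrt{C_k})$ to $M$, apply $A^q_k$, restrict back'', so composing two such maps discards whatever mass of $H^q_k(t_2/C_k)u_k$ leaves the chart in between; the semigroup law holds only for $H^q_k$ on $M$. One could recover the limit identity by separately bounding this off-chart leakage, but Lemma \ref{4.3 lem 3} does not address that. A cleaner fix --- the one typically used in heat-kernel limit arguments --- is to drop the semigroup law and Hille--Yosida altogether. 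From what you already establish, namely $\|H^q_0(t)\|\le 1$, the equation $(\p_t+\Box^q_{0,s})H^q_0(t)u=0$ for $t>0$, and $\|H^q_0(t)u-u\|_{\omega_0}\le t\,\|\Box^q_{0,s}u\|_{\omega_0}$ on $\Omega^{0,q}_c(\Cn)$, uniqueness follows directly by Duhamel: for $0<\sigma<t$ the map $\sigma\mapsto e^{-(t-\sigma)\Box^q_{0,s}}H^q_0(\sigma)u$ has vanishing derivative, whence $H^q_0(t)u=e^{-t\Box^q_{0,s}}u$. (The bound $\|\Box^q_{0,s}H^q_0(\sigma)u\|_{\omega_0}\le\|\Box^q_{0,s}u\|_{\omega_0}$, needed to justify differentiation, follows from $\|C_k^{-1}\Box^q_k e^{-(\sigma/C_k)\Box^q_k}u_k\|_{\omega,\phi_k}\le\|C_k^{-1}\Box^q_k u_k\|_{\omega,\phi_k}$ on $M$ by the same limiting procedure you use for the initial datum.)
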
  

Next, we set an operator $A^q_k(t):L^2_{\omega,\phi_k}(M,\TM\otimes L_k)\To L^2_{\omega,\phi_k}(M,\TM\otimes L_k)$ by \[
A^q_k(t):=H^q_k(t/C_k)-P^q_k.
\]
Then, $A^q_k(t)$ is the functional calculus of \[a^q_k(t,s):=\left(1-\mathbbm{1}_{[0,c'_k]}(s)\right)e^{-\frac{t}{C_k}s},\]with respect to $\Box^q_k$. Here, $c_k^{'}$ is the global spectral gap (cf. Assumption \ref{spectral gap 2}). Let $A^q_k(t,z,w)$ be the Schwartz kernel of $A^q_k(t)$. By Theorem \ref{thm uniform bound}, the $\mathscr{C}^\ell$-norm for some $\ell\in\N$ is locally dominated by 
\[
\sup_{s\in [0,\infty)}|a^q_k(t,s)|+\sup_{s\in[0,\infty)}\left[(\frac{s}{C_k})^m+(\frac{s}{C_k})^{2m}\right]a^q_k(t,s)\leq \left(e^{-\frac{c'_k}{C_k}t}+t^{-m}+t^{-2m}\right)\lesssim t^{-N(\ell)},
\]for some $N(\ell)\in\N$ depending on $\ell$. Here, we use the Assumption \ref{spectral gap 2}. We deduce that for any $\ell\in\N$ and bounded domain $U\subset\Cn$, there exists $N(\ell,U)\in\N$ such that
\[
\aaa{A^q_{(k),s}(t,x,y)}_{\mathscr{C}^{\ell}(U\times U)}\lesssim t^{-N(\ell,U)}.
\]Finally, we derive that\[
\lim_{t\To\infty}\limsup_{k\To\infty}\aaa{A^q_{(k),s}(x,y)}_{\mathscr{C}^\ell(U\times U)}=0.
\]Observe that\[
H^{q}_{(k),s}(t,z,w)=P^q_{(k),s}(z,w)+A^q_{(k),s}(z,w).
\] 
Hence,
\[
\lim_{t\To\infty}\limsup_{k\To\infty}\aaa{H^{q}_{(k),s}(t,z,w)-P^q_{(k),s}(z,w)}_{\mathscr{C}^\ell(U\times U)}=0.
\] By Theorem \ref{Heat}, we have
\begin{align*}
0&=\lim_{t\To\infty}\lim_{k\To\infty}\aaa{H^{q}_{(k),s}(t,z,w)-P^q_{(k),s}(z,w)}_{\mathscr{C}^\ell(U\times U)}\\&=\lim_{t\To\infty} \aaa{H^q_{0,s}(t,z,w)- \lim_{k\To \infty}P^q_{(k),s}(z,w)}_{\mathscr{C}^\ell(U\times U)}.
\end{align*} This tells us $\lim_{t\To\infty}H^q_{0,s}(t,z,w)=\lim_{k\To \infty}P^q_{(k),s}(z,w)$ in $\mathscr{C}^{\infty}$-topology. Here, we assume $\lim_{k\To \infty}P^q_{(k),s}(z,w)$ exists. To prove the main theorems for Bergman kernel, it is sufficient to show the following Lemma:
\begin{lem} We have the convergence
 \[\lim_{t\To\infty}H^q_{0,s}(t,z,w)=P^q_{0,s}(z,w),
    \]local uniformly in $\mathscr{C}^{\infty}$-topology on $\Cn\times \Cn$.
\end{lem}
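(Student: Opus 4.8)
The plan is to exploit the fact that the model Kodaira Laplacian $\Box^{q}_{0,s}$ on $\Cn$ has a positive spectral gap above its kernel, so that the heat semigroup $e^{-t\Box^{q}_{0,s}}$ collapses exponentially onto the Bergman projection, and then to upgrade this $L^{2}$-operator statement to a statement about Schwartz kernels in the $\mathscr{C}^{\infty}$-topology by reusing the elliptic-estimate machinery of Section \ref{Aaaac}. First I would record the spectral gap: there is a constant $c>0$ with $\text{spec}\,\Box^{q}_{0,s}\subset\{0\}\cup[c,\infty)$ when $p\in M(q)$, and $\text{spec}\,\Box^{q}_{0,s}\subset[c,\infty)$ when $p\notin M(q)$. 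When $p\notin M(q)$ this is immediate from $\Ker\Box^{q}_{0,s}=\{0\}$ (Theorem \ref{4.1 Model theorem}) together with the model-case analogue of Lemma \ref{4.4 lem 1}. When $p\in M(q)$ it follows either from the explicit (Bargmann--Fock type) spectral decomposition of $\Box^{q}_{0,s}$, in which the gap is visible from the signs of the $\lambda_{i}$ (cf. \cite{me}), or by passing to the limit $k\to\infty$ in the uniform gaps of Lemma \ref{4.4 lem 1} and Theorem \ref{4.4 thm spectral gap}, using $\Box^{q}_{(k),s}\to\Box^{q}_{0,s}$ locally uniformly, the coincidence of $\Box^{q\sim}_{(k),s}$ with $\Box^{q}_{(k),s}$ on the exhausting balls $B((C_k)^{\epsilon})$, and the convergence of the Bergman projections of $\Box^{q\sim}_{(k),s}$ to $P^{q}_{0,s}$ on compact sets. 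With the gap in hand, $P^{q}_{0,s}=\mathbbm{1}_{\{0\}}(\Box^{q}_{0,s})$ is the orthogonal projection onto $\Ker\Box^{q}_{0,s}$, with Schwartz kernel $P^{q}_{0,s}(z,w)$.

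Next I would consider the bounded self-adjoint operator
\[
A_{0}(t):=H^{q}_{0,s}(t)-P^{q}_{0,s}=\mathbbm{1}_{[c,\infty)}\!\left(\Box^{q}_{0,s}\right)e^{-t\Box^{q}_{0,s}},
\]
which is the functional calculus of $a_{t}(s):=\mathbbm{1}_{[c,\infty)}(s)\,e^{-ts}$ with respect to $\Box^{q}_{0,s}$. Because $a_{t}$ is supported in $[c,\infty)$ we get $\|A_{0}(t)\|_{\omega_{0}}\le e^{-ct}$, and for each $m\in\N$ one has $\|(\Box^{q}_{0,s})^{m}A_{0}(t)\|_{\omega_{0}}\le\sup_{s\ge c}s^{m}e^{-ts}\to 0$ as $t\to\infty$ (for $t\ge m/c$ the supremum is attained at $s=c$ and equals $c^{m}e^{-ct}$). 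I would then run the argument of Lemma \ref{lemma mapping property} and Theorem \ref{thm uniform bound}, specialized to the single fixed operator $\Box^{q}_{0,s}$ on $\Cn$ (no scaling and no $k$-dependence are needed, so the proof simplifies): combining the interior elliptic estimate $\|\chi v\|_{2m}\lesssim\|v\|_{0}+\|(\Box^{q}_{0,s})^{m}v\|_{0}$, the Sobolev embedding $W^{2m}_{c}\hookrightarrow\mathscr{C}^{\ell}$ for $2m\ge\ell+n$, the operator bounds above, and a duality argument in the second variable, one obtains, for each bounded $U\subset\Cn$ and $\ell\in\N$,
\[
\big|A_{0}(t,\cdot,\cdot)\big|_{\mathscr{C}^{\ell}(U\times U)}\ \lesssim\ e^{-ct}+\sup_{s\ge c}s^{m}e^{-ts}+\sup_{s\ge c}s^{2m}e^{-ts},
\]
with implied constant depending only on $U,\ell,m$, and the right-hand side tends to $0$ as $t\to\infty$ for any fixed $m$ with $2m\ge\ell+n$.

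Since $A_{0}(t,z,w)=H^{q}_{0,s}(t,z,w)-P^{q}_{0,s}(z,w)$, this is exactly the assertion that $H^{q}_{0,s}(t,z,w)\to P^{q}_{0,s}(z,w)$ locally uniformly in the $\mathscr{C}^{\infty}$-topology on $\Cn\times\Cn$; the corresponding statement for the weighted kernels $H^{q}_{0}$, $P^{q}_{0}$ then follows by conjugating with $e^{\pm\phi_{0}}$, using $H^{q}_{0,s}(t,z,w)=e^{-\phi_{0}(z)}H^{q}_{0}(t,z,w)e^{\phi_{0}(w)}$ and $P^{q}_{0,s}(z,w)=e^{-\phi_{0}(z)}P^{q}_{0}(z,w)e^{\phi_{0}(w)}$, which is legitimate as $e^{\pm\phi_{0}}$ is smooth. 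The only genuinely non-routine step is the spectral gap of $\Box^{q}_{0,s}$ above its kernel (Step 1): once that is available, the functional-calculus bounds are elementary and the kernel upgrade is essentially a verbatim reprise of the proof of Theorem \ref{thm uniform bound}. The cleanest way to make Step 1 airtight is the explicit spectral decomposition of the model operator; the limiting argument is also viable but requires the (standard but slightly delicate) fact that the uniform gap constant of Theorem \ref{4.4 thm spectral gap} survives the limit $\Box^{q\sim}_{(k),s}\to\Box^{q}_{0,s}$. As a remark, since the model heat kernel $H^{q}_{0}(t,z,w)$ also admits an explicit Mehler-type formula (cf. \cite{jt}), one could alternatively verify the limit $t\to\infty$ by direct computation.
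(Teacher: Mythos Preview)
Your proposal is correct and follows essentially the same route as the paper's sketch: establish a spectral gap $c>0$ for the model operator $\Box^{q}_{0,s}$, write $H^{q}_{0,s}(t)-P^{q}_{0,s}$ as the functional calculus of $(1-\mathbbm{1}_{[0,c]}(s))e^{-ts}$, and rerun the machinery of Lemma \ref{lemma mapping property} and Theorem \ref{thm uniform bound} to convert the operator-norm decay $\|A_0(t)\|,\ \|(\Box^{q}_{0,s})^{m}A_0(t)\|,\ \|(\Box^{q}_{0,s})^{m}A_0(t)(\Box^{q}_{0,s})^{m}\|\to 0$ into $\mathscr{C}^{\ell}$-decay of the kernel. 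The one minor difference is in how the gap itself is obtained: the paper specializes the whole setup of Section \ref{section 4.3} to $M=\Cn$ with $\phi_k=C_k\phi_0$, so that $\tilde\phi_{(k)}\equiv\phi_0$ and $\tilde\omega_{(k)}\equiv\omega_0$ exactly and Theorem \ref{4.4 thm spectral gap} (together with the inequalities (\ref{4.4 lem 1 (2)}), (\ref{123444}) in the degenerate case) applies directly with no limiting argument, whereas you invoke the explicit Bargmann--Fock spectrum or a limit in $k$; your explicit-spectrum route is fine, but the paper's specialization trick is cleaner than your limiting variant.
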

\begin{proof}[Sketch of the proof]
First, we need a spectral gap for the model case. We set $M=\Cn$ with the standard Hermitian form $\omega=\omega_0$, and consider the sequence trivial line bundle $L_k$ over $\Cn$ with the quadratic weight function $\phi_k=C_k\phi_0$. We apply the results in Section \ref{section 4.3} to the case set above. Thus, $\Tilde{\phi}_{(k)}=\phi_0$ and $\Tilde{\omega}_{(k)}=\omega_0$. If the curvature is non-degenerate, that means $\lambda_i\neq 0$ for all $i=1,\cdots,n$. By Theorem \ref{4.4 thm spectral gap}, we have the global spectral gap on $\Cn$ as follows:
\begin{equation}\label{Hi}
 c:=\inf(\text{spec}\Box^q_0-\{0\})>0.   
\end{equation}
Moreover, if the curvature is degenerate, that means $\lambda_i=0$ for some $i=1,\cdots,n$. We can also obtain (\ref{Hi}) by (\ref{4.4 lem 1 (2)}) and (\ref{123444}) in the proof of Lemma \ref{4.4 lem 1}. 

Now, we have the global spectral gap for the model case on $\Cn$ with the weight function $\phi_0$ and Hermitian form $\omega_0$. To show the lemma, we repeat the proof of Lemma \ref{lemma mapping property} and Theorem \ref{thm uniform bound}. Define \[A_{0}^q(t,x,y):=H^q_{0}(t,z,w)-P^q_{0}(x,y)\] which is the Schwartz kernel of the functional calculus of $a^q_{0}$ where \[a^q_{0}:=(1-\mathbbm{1}_{[0,c]}(s))e^{-ts}.\] As introduced in Section \ref{Aaaac}, we consider the localized kernels \[
A_{0,s}^q(t,x,y):=H^q_{0,s}(t,z,w)-P^q_{0,s}(x,y).
\] Our goal is to show that $A_{0,s}^q(t,x,y)\To 0$ in $\mathscr{C}^{\infty}$-topology as $t\To \infty$. Let $\chi$ and $\rho$ be two cut-off functions in $\Cn$ and $\ell\in\N$. By the process in the proof of Lemma \ref{lemma mapping property} and Theorem \ref{thm uniform bound}, we can see that \[
\aaa{\chi(x)\left(A^q_{0,s}(t,x,y)\right)
\rho(y)}_{\mathscr{C}^\ell(\Cn\times \Cn)}\] is dominated by (up to constants depending on $\chi$,$\rho$, and $\ell$) the operator norms of \[
\|A^q_{0,s}\|, \quad \|\Box^q_{0,s}A^q_{0,s}\|,\quad\|\Box^q_{0,s}A^q_{0,s}\Box^q_{0,s}\| \quad \text{on }\, L^2_{\omega_0}(\Cn,\T).
\]The operator norms above can be bounded by \[e^{-tc},\quad \sup_{s\geq c} s e^{-ts},\quad \sup_{s\geq c} s^2 e^{-ts},\] respectively. Since all quantities are $O(t^{-N})$ for some $N\in\N$, they tend to zero uniformly as $t$ goes to infinity.

\end{proof}

\newpage


\begin{thebibliography}{99}











\bibitem{001}
Robert Berman, Bo Berndtsson, Johannes Sjöstrand. A direct approach to Bergman kernel asymptotics for positive
line bundles, Ark. Math. 46 (2008), no. 2,197–217.

\bibitem{002}
Thierry Bouche. Convergence de la métrique de Fubini-Study d'un fibré linéaire positif. Annales de l'Institut Fourier, Volume 40 (1990) no. 1, pp. 117-130. doi : 10.5802/aif.1206. http://www.numdam.org/articles/10.5802/aif.1206/

\bibitem{003}
David Catlin. The Bergman kernel and a theorem of Tian. In Analysis and geometry
in several complex variables (Katata, 1997), Trends Math., pages 1-23. Birkhäuser
Boston, Boston, MA, 1999.

\bibitem{jt}
Eric Jian-Ting Chen. Semi-classical Heat Kernel Asymptotics and Morse Inequalities, Academia Sinica, 18(2023) no.4, 365-418.


\bibitem{me}
Yueh-Lin Chiang. Semi-classical Asymptotics of Bergman and Spectral Kernels for $(0,q)$-forms, Bulletin of the Institute of Mathematics, Academia Sinica, 18(2023) no.3, 299-364.
\bibitem{01}
Dan Coman, Wen Lu, Xiaonan Ma, George Marinescu.
Bergman kernels and equidistribution for sequences of line bundles on Kähler manifolds,
Advances in Mathematics,
Volume 414,
2023,
108854,
ISSN 0001-8708.
\bibitem{004}
Xianzhe Dai, Kefeng Liu, Xiaonan Ma. On the asymptotic expansion of Bergman kernel, C. R. Math. Acad. Sci. Paris
339 (2004), no. 3, 193–198.
\bibitem{Davies}
Edward Davies. Spectral theory and differential operators, volume 42 of Cambridge Studies in Advanced Mathematics. Cambridge University Press, Cambridge, 1995.
\bibitem{005}
Xianzhe Dai, Kefeng Liu, Xiaonan Ma. On the asymptotic expansion of Bergman kernel, J. Differential Geom., 72,
(2006), no. 1, 1–41.
\bibitem{02}
Jean-Pierre Demailly. Complex Analytic and Differential Geometry, Open Content Book, 2012.
\bibitem{De}
Jean-Pierre Demailly. Champs magnétiques et inégalités de Morse pour la $d^{\prime \prime }$-cohomologie. Annales de l'Institut Fourier, Volume 35 (1985) no. 4, pp. 189-229. 
\bibitem{Hou}
 Yu-Chi Hou. Asymptotic expansion of the Bergman kernel via semiclassical symbolic calculus. Bulletin of the Institute of Mathematics, Academia Sinica, (N.S.) 17(1), 1–51
(2022).
\bibitem{006}
Chin-Yu Hsiao, George Marinescu. Asymptotics of spectral function of lower energy
forms and Bergman kernel of semi-positive and big line bundles, Comm. Anal. Geom.,
22 (2014), No.1, 1-108.

\bibitem{Kohn}
Chin-Yu Hsiao, Weixia Zhu.  Heat kernel asymptotics for Kohn Laplacians on CR manifolds. Journal of Functional Analysis(2023), 284(2), 109755. 
\bibitem{M}
Xiaonan Ma, George Marinescu. Holomorphic Morse inequalities and Bergman
kernels, volume 254 of Progress in Mathematics, Birkhäuser Verlag, Basel, 2007.

\bibitem{007}
Xiaonan Ma, George Marinescu. The first coefficients of the asymptotic expansion
of the Bergman kernel of the $\text{spin}^{c}$ Dirac operator, Internat. J. Math., 17 (2006),
No.6, 737-759.
\bibitem{Gaffney}
Matthew Gaffney. Hilbert space methods in the theory of harmonic integrals. Trans. Amer.
Math. Soc., 78:426–444, 1955.


\bibitem{Yos}
Kosaku Yosida. Functional analysis. Classics in Mathematics, Springer-Verlag, Berlin, 1995.
Reprint of the sixth (1980) edition.
\end{thebibliography}
\end{document}